\newtheorem{theorem}{Theorem}[section]
\newtheorem{remark}[theorem]{Remark}
\newtheorem{definition}[theorem]{Definition}
\newtheorem{proposition}[theorem]{Proposition}
\newtheorem{lemma}[theorem]{Lemma}
\def\ZZ{{\mathbb Z}}
\def\QQ{{\mathbb Q}}
\def\FF{{\mathbb F}}
\def\SS{{\mathcal S}}
\def\O{{\mathcal O}}
\def\1{{\mathbbm 1}}
\newcommand{\rr}{\rightarrow}
\newcommand{\op}{\operatorname}
\newcommand{\quash}[1]{}
\begin{document}

\title{On the quaternionic Serre Weights}
\author{Yang CHEN \and Haoran WANG}
\date{}

\maketitle
\begin{abstract}
    We give a representation theoretic description of the set of quaternionic Serre weights for generic two-dimensional mod $p$ representations of $\operatorname{Gal}(\overline{\QQ}_p / K)$, where $K$ is a finite unramified extension of $\QQ_p$.    
\end{abstract}
\tableofcontents
\section{Introduction}

Let $p$ be a prime number. Serre \cite{MR885783} conjectured that every odd irreducible continuous representation $
\overline{r}: {\rm Gal}(\overline{\mathbb{Q}}/\mathbb{Q}) \to \mathrm{GL}_{2}\left(\overline{\mathbb{F}}_{p}\right)
$ is modular in the sense that it arises from some modular eigenform. Serre also formulated a refined conjecture which predicts the minimal 
weight and level of the modular eigenforms. The original Serre's conjecture has been proved by Khare and Wintenberger \cite{Khare-Wintenberger}. In fact Serre \cite{MR885783} asked whether a ``mod $p$ Langlands philosophy'' exists, which is now known to be true for ${\rm GL}_2$ over $\mathbb{Q}$ \cite{Emerton} \cite{BreuilICM}. 

In \cite{MR2730374} Buzzard, Diamond and Jarvis generalized Serre's refined conjecture to Hilbert modular forms over totally real field $F$ in which $p$ is unramified. As there is no obvious notion of minimal weight, \cite{MR2730374} used irreducible $\overline{\mathbb{F}}_{p}$-representations of $\op{GL}_{2}(\mathcal{O}_F/ p) $ as a generalization of the notion of weight, and predicted all the possible weights for totally odd irreducible continuous modular representation $\overline{r}: {\rm Gal}(\overline{F}/F) \to \mathrm{GL}_{2}\left(\overline{\mathbb{F}}_{p}\right).$ Let's call the set of all possible weights the set of ${\rm GL}_2$-Serre weights for $\overline{r}.$ The weight part of Serre's conjecture formulated in \cite{MR2730374} and its natural generalizations to definite or indefinite quaternion algebra $B$ over $F$ which splits at all places above $p$ are proved in \cite{GLS}. There are important generalizations of weight part of Serre's conjecture formulated in \cite{Schein} \cite{Herzig}  \cite{GHS}. We refer to the introduction of \cite{GHS} for a historical account.

In this paper we consider the weight part of Serre's conjecture for $\overline{r}: {\rm Gal}(\overline{F}/F) \to \mathrm{GL}_{2}\left(\overline{\mathbb{F}}_{p}\right)$ when $B$ is a definite or indefinite quaternion algebra over totally real field $F$ ramified at all places above $p.$ The possible weights are irreducible $\overline{\mathbb{F}}_{p}$-representations of $(\O_B \otimes_{\ZZ} \ZZ_p)^{\times},$ where $\O_B$ is a fixed maximal order. Let's call the set of all possible weights the set of quaternionic Serre weights for $\overline{r}.$ Recently, Scholze \cite{Scholze} proposed a mod $p$ Jacquet-Langlands correspondence which satisfies some local-global compatibility. Recent works like \cite{Ludwig} \cite{Paskunas} \cite{Hu-Wang-JL1} give some study on this mod $p$ Jacquet-Langlands correspondence. We would like to view the question of determining the set of quaternionic Serre weights as part of the mod $p$ Langlands program. Up to multiplicity, it gives the $(\O_B \otimes_{\ZZ} \ZZ_p)^{\times}$-socle of some interesting mod $p$ representation of $(B\otimes_{\mathbb{Q}}\mathbb{A}_f)^{\times}.$

When $F=\QQ,$ the set of quaternionic Serre weights is completely determined in \cite{MR1802794}. Under some Taylor-Wiles type assumption, \cite{Gee-Savitt} determined, in terms of irreducible $\overline{\mathbb{F}}_{p}$-representations of $(\O_B \otimes_{\ZZ} \ZZ_p)^{\times},$ the set of quaternionic Serre weights in most cases if $\overline{r}|_{{\rm Gal}(\overline{F}_v/F_v)}$ is {\em semisimple} for all $v|p.$ In the non-semisimple case, they gave an explicit description in terms of $p$-adic Hodge theory. However, in order to understand the mod $p$ Jacquet-Langlands correspondence, one still wants to have an explicit description in terms of representations of $(\O_B \otimes_{\ZZ} \ZZ_p)^{\times}.$ Note that \cite{Gee-Savitt} had no assumption on the ramification of $p$ in the totally real field $F.$

In this paper, we assume $p$ is unramified in $F$ and we give a representation theoretic description of the set of quaternionic Serre weights for {\em generic} 2-dimensional modular $\overline{r}$ under the Taylor-Wiles type assumption. The generic condition is the one introduced in \cite{Breuil-Paskunas} for $\overline{r}|_{{\rm Gal}(\overline{F}_v/F_v)}$ for all $v|p.$ It follows from  \cite{Gee-Savitt} that every quaternionic Serre weight is of the form $\otimes_{v|p} \sigma_v$, for $\sigma_v$ lies in a set $W_{B_v} (\overline{r}|_{{\rm Gal}(\overline{F}_v/F_v)})$ which depends only on $\overline{r}|_{{\rm Gal}(\overline{F}_v/F_v)}.$ The question is then reduced to determine the set  $W_{B_v} (\overline{r}|_{{\rm Gal}(\overline{F}_v/F_v)})$ for each $v|p.$ We let $K:= F_v$ which is unramified of degree $f$ over $\QQ_p$  with residue field $k.$ Let $\overline{\rho} : = \overline{r}|_{{\rm Gal}(\overline{F}_v/F_v)}(1),$ $D:= B_v$ and $W_D(\overline{\rho}) : = W_{B_v} (\overline{r}|_{{\rm Gal}(\overline{F}_v/F_v)}).$ We deduce our results from the existing knowledge of ${\rm GL}_2$-Serre weights (\cite{MR2392355} \cite{Breuil-Paskunas} \cite{MR3274546}). Under the generic assumption, it is easy to see that an irreducible mod $p$ representation $\overline{\psi}$ of $\mathcal{O}_D^{\times}$ is in $W_D(\overline{\rho})$ if and only if $W_{\op{GL}_2}(\overline{\rho})$ intersects with the set of Jordan-H\"older factors of the reduction mod $p$ of the tame cuspidal type $\Theta([\overline{\psi}]),$ where $[\overline{\psi}]$ is the Techm\"uller lift of $\overline{\psi},$ see Proposition \ref{prop::serre wts}\footnote{This has already been indicated in Remark 5.10 of \cite{Gee-Geraghty}} for a precise statement. This gives a way to compute $\overline{\psi}.$ We then prove that such $\overline{\psi}$'s can be  parameterised by $(\vec{w},\vec{d})$ with $\vec{w}=(w_0,...,w_{f-1})\in\{0,1\}^{\ZZ/f\ZZ}$ and $\vec{d}=(d_0,...,d_{f-1})\in\{-1,0,1\}^{\ZZ/f\ZZ}$ subject to some relations. Our main results in the semisimple case are given in Theorems \ref{26}, \ref{thm::irreducible case} respectively. We use Theorem \ref{26} as an example to illustrate our explicit description.

\begin{theorem}[Theorem \ref{26}]
Let $q = p^f$ and let $l $ be the quadratic extension of $k.$ We fix an embedding $\overline{\iota}: l \hookrightarrow \mathbb{F}$, where $\mathbb{F}$ is a sufficiently large finite extension of $\mathbb{F}_p$. Let $\overline{\kappa}_{0}: k \hookrightarrow \mathbb{F}$ be given by $\overline{\iota}^{q+1}=\overline{\kappa}_{0}\circ\op{Nm}_{l/k}$.
    Let $\overline{\rho}: \operatorname{Gal}\left(\overline{\QQ}_p / K\right) \rightarrow \mathrm{GL}_{2}\left(\mathbb{F} \right)$ be a continuous representation such that its restriction to inertia is: 
$$
\left(\begin{array}{cc}
\omega_{f}^{r_{0}+1+p\left(r_{1}+1\right)+\cdots+p^{f-1}\left(r_{f-1}+1\right)} & 0 \\
0 & 1
\end{array}\right) \otimes (\overline{\kappa}_0\circ\op{res}\circ \op{Art}_K^{-1})
$$
with $-1 \leq r_{i} \leq p-2$. Assume $\overline{\rho}$ is generic in the sense that $0\leq r_i\leq p-3$ for all $i$ and not all $r_i$
equal to $0$ or equal to $p-3$. Then $W_D(\overline{\rho})$ consists of $\overline{\psi}=\overline{\iota}^{\sum_{i=0}^{f-1} q^{w_{i}} p^{i} r_{i}+(1-q)\sum_{i=0}^{f-1}d_ip^i},$ where $w_{i} \in\{0,1\}$ and $d_{i} \in\{-1,0,1\}$ satisfying: 
    \begin{itemize}
        \item For $i>0$, 
        $w_i=\left\{\begin{array}{cl}
        1&  \text { if } d_i=-1\\
        0&  \text { if } d_i=1             
        \end{array}\right.$, 
    and if $d_i=0$, then $(w_{i-1},w_i)=(0,0)$ or $(1,1)$.
        \item 
        $w_0=\left\{\begin{array}{cl}
        1&  \text { if } d_0=-1\\
        0&  \text { if } d_0=1             
        \end{array}\right.$, 
    and if $d_0=0$, then $(w_{f-1},w_0)=(0,1)$ or $(1,0)$. 
    \end{itemize}
We write $\overline{\psi}_{\vec{w},\vec{d}}$ for such $\overline{\psi}$ and we have $\overline{\psi}_{\vec{w},\vec{d}}=\overline{\psi}_{\vec{w}',\vec{d}'}$ if and only if $(\vec{w},\vec{d})=(\vec{w}',\vec{d}').$ 
\end{theorem}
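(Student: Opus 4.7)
The plan is to deduce the theorem from Proposition \ref{prop::serre wts}, which rephrases $\overline{\psi} \in W_D(\overline{\rho})$ as the condition that the Jordan--Hölder set $\mathrm{JH}(\overline{\Theta([\overline{\psi}])})$ meets $W_{\mathrm{GL}_2}(\overline{\rho})$. The characters of $\mathcal{O}_D^{\times}$ that are trivial on $1+\varpi_D\mathcal{O}_D$ coincide with the characters of $l^{\times}$, so any irreducible $\overline{\psi}$ has the form $\overline{\iota}^{n}$ for some $n\in\mathbb{Z}/(q^{2}-1)\mathbb{Z}$. The aim is then to pin down those $n$ for which the above intersection is non-empty, and to show that they are exactly the integers of the form $n=\sum_{i} q^{w_{i}} p^{i} r_{i}+(1-q)\sum_{i} d_{i} p^{i}$ subject to the relations on $(\vec{w},\vec{d})$ listed in the statement.

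The argument rests on two explicit descriptions. First, for generic reducible split $\overline{\rho}$ of the given form, $W_{\mathrm{GL}_2}(\overline{\rho})$ is the familiar set of $2^{f}$ Breuil--Pa\v{s}k\={u}nas weights, indexed by subsets $J\subseteq\{0,\ldots,f-1\}$, whose $i$-th digit is either $r_{i}$ or $p-3-r_{i}$ (according as $i\notin J$ or $i\in J$) and which carry an explicit $J$-dependent determinant twist. Second, for a regular character $[\overline{\iota}^{n}]$ of $l^{\times}$, a Diamond-type formula (used in \cite{MR2392355}, \cite{Breuil-Paskunas}) describes $\mathrm{JH}(\overline{\Theta([\overline{\iota}^{n})]})$ as another set of $2^{f}$ Serre weights, itself parameterised via the base-$p$ digits of $n$ modulo $q-1$ and $q+1$. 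I would match the two parameterisations digit by digit: the coordinate $w_{i}\in\{0,1\}$ records whether the $i$-th digit of the cuspidal-type weight is \emph{swapped} in order to land in $W_{\mathrm{GL}_2}(\overline{\rho})$, while $d_{i}\in\{-1,0,1\}$ absorbs the resulting carry or unit-character twist. The stated conditions, including the asymmetry between $i>0$ and $i=0$ coming from the factor $q=p^{f}$ in $q^{w_{i}}p^{i}r_{i}$, are exactly the constraints under which the matching produces a genuine element of $W_{\mathrm{GL}_2}(\overline{\rho})$.

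The uniqueness clause $\overline{\psi}_{\vec{w},\vec{d}}=\overline{\psi}_{\vec{w}',\vec{d}'}\Rightarrow(\vec{w},\vec{d})=(\vec{w}',\vec{d}')$ would follow from a separate digit-recovery argument: the genericity $0\le r_{i}\le p-3$, together with the exclusion of $\vec{r}=\vec{0}$ and $\vec{r}=(p-3,\ldots,p-3)$, forbids the digit collisions that could otherwise arise in the base-$p$ expansion of $n$ modulo $q^{2}-1$, so $(\vec{w},\vec{d})$ can be read off from $n$ under the imposed relations.

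The main obstacle is the combinatorial bookkeeping. Because $n$ is only determined modulo $q^{2}-1$ and because $\{q^{w_{i}} p^{i}\}\cup\{(1-q)p^{i}\}$ is not a standard positional basis, one must carefully track carries, the cyclic wrap from $i=f-1$ to $i=0$, and above all the \emph{degenerate} cases $d_{i}=0$: in these cases a single Serre weight admits two $(\vec{w},\vec{d})$-descriptions, and the listed rules on $(w_{i-1},w_{i})$ select exactly one canonical representative, thereby reconciling exhaustiveness with injectivity. Everything else is routine case-checking against the Breuil--Pa\v{s}k\={u}nas and Diamond formulas.
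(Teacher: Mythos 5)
Your high-level strategy --- reduce via Proposition~\ref{prop::serre wts} to intersecting $\mathrm{JH}(\overline{\Theta}([\overline{\psi}])^{\mathrm{ss}})$ with $W_{\mathrm{GL}_2}(\overline{\rho})$, then match Diamond's parameterisation of the Jordan--H\"older factors against the Breuil--Pa\v{s}k\={u}nas parameterisation of the $\mathrm{GL}_2$-weights digit by digit --- is indeed the route the paper takes. But your proposal is essentially a restatement of that plan and explicitly defers what you call ``the combinatorial bookkeeping'' as routine, whereas that bookkeeping is the substance of the theorem. The paper carries it out by introducing the auxiliary pairs $(\vec{u},\vec{v})\in(\{0,1\}^{\mathbb{Z}/f\mathbb{Z}})^2$ indexing JH factors and $\mathrm{GL}_2$-weights, encoding the matching in polynomial data $c_{\vec{u},\vec{v},i}$, $t_{\vec{u},\vec{v},i}$, $b_{\vec{u},\vec{v}}\in\mathbb{Z}[x_0,\ldots,x_{f-1},x]$, and then computing the linear-in-$x_i$ part (yielding $w_i\equiv u_i+v_i\bmod 2$, Proposition~\ref{19}) and the constant part (yielding the tables for $d_i$, Proposition~\ref{11}); this is a nontrivial case analysis, not a one-line match, and it is exactly where the relations in the theorem come from.

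Two guesses in your sketch about what that analysis would reveal are wrong, and pursuing them would lead you astray. First, you say that when $d_i=0$ ``a single Serre weight admits two $(\vec{w},\vec{d})$-descriptions'' and the rules ``select exactly one canonical representative.'' The theorem's final clause asserts the opposite: $(\vec{w},\vec{d})\mapsto\overline{\psi}_{\vec{w},\vec{d}}$ is injective. When $d_i=0$ the rule $w_i=w_{i-1}$ (resp.\ $w_0\neq w_{f-1}$ at the wrap) simply propagates uniquely from the nearest nonzero $d_j$, so for $\vec{d}\neq\vec{0}$ there is exactly one admissible $\vec{w}$ and nothing to choose; $\vec{d}=\vec{0}$ is excluded not by discarding a representative but because the two propagation rules are mutually inconsistent around the cycle. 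Second, you attribute the asymmetry between $i=0$ and $i>0$ to ``the factor $q=p^f$ in $q^{w_i}p^ir_i$,'' but that factor is symmetric in $i$. The asymmetry is a wrap-around effect: it comes from the shift $J_0=J\triangle\{f-1\}$ in Diamond's $\vec{s}_J,\vec{t}_J$ and from the determinant twist $\delta_J(0)\delta_J(f-1)+\delta_{J^c}(0)\delta_{J^c}(f-1)$ in $\overline{\Theta}(\psi)_J$, which is precisely why the paper needs a separate Table~\ref{b} for $d_0$. Finally, your sketch omits the surjectivity direction entirely: one must exhibit, for each admissible $\vec{d}\neq\vec{0}$, an actual pair $(\vec{u},\vec{v})$ realizing $\overline{\psi}_{\vec{w},\vec{d}}$ as an intersection point, and the paper devotes an explicit construction to this. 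Your injectivity argument via genericity, on the other hand, is in the right spirit; the paper makes it precise by bounding $\sum_i p^i|(w_i-w_i')r_i-(d_i-d_i')|<q+1$ and forcing each summand to vanish.
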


To determine the set $W_D(\overline{\rho})$ for non-semisimple $\overline{\rho},$ we first give a partition of $W_D(\overline{\rho}^{{\rm ss}})$ where $\overline{\rho}^{{\rm ss}}$ denotes the semisimplification of $\overline{\rho}.$ We identify the set of ${\rm GL}_2$-Serre weights for  $\overline{\rho}^{{\rm ss}}$ with the set of $f$-tuples $\vec{v} \in \{0,1\}^{\mathbb{Z}/f\mathbb{Z}}.$ We have 
\[
W_D(\overline{\rho}^{{\rm ss}}) = \bigsqcup\limits_{{\vec{v}} \in \{0,1\}^{\mathbb{Z}/f\mathbb{Z}}} W_{D}^{\vec{v}}(\overline{\rho}^{{\rm ss}}).
\]
Each subset $W_{D}^{\vec{v}}(\overline{\rho}^{{\rm ss}})$ can be explicited described. 

\begin{theorem}[Theorem \ref{thm::nonsplit}]
    Let $\overline{\rho}$ be the generic reducible nonsplit Galois representation as in Definition \ref{2.6}. Then
$$W_{D}(\overline{\rho})=\bigsqcup_{{\vec{v}}\leq\vec{v}(\overline{\rho})}W_{D}^{{\vec{v}}}(\overline{\rho}^{{\rm ss}}),$$
where $\leq$ is the partial order which is translated from the partial order defined in \cite[\S 11]{Breuil-Paskunas}  and $\vec{v}(\overline{\rho})\in\{0,\ldots,f-1\}^{\ZZ/f\ZZ}$ is the $f$-tuple associated to $\overline{\rho}$ which is defined in Definition \ref{2.6}.
\end{theorem}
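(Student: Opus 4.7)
The plan is to reduce the theorem to the known description of $W_{\op{GL}_2}(\overline{\rho})$ in the non-split case via Proposition \ref{prop::serre wts}, and then to exploit the partition of $W_D(\overline{\rho}^{\rm ss})$ that was already introduced to index the Jordan--H\"older factors of the reductions of the tame cuspidal types. The key observation driving the whole argument is that $\overline{\psi}\in W_D(\overline{\rho})$ depends on $\overline{\rho}$ only through the \emph{intersection} of $W_{\op{GL}_2}(\overline{\rho})$ with the Jordan--H\"older content of the mod $p$ reduction $\overline{\Theta([\overline{\psi}])}$ of the tame cuspidal type, so all the work is to recognize this intersection as a condition on the tuple $\vec v$ labelling the pieces $W_D^{\vec v}(\overline{\rho}^{\rm ss})$.

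First I would apply Proposition \ref{prop::serre wts} to both $\overline{\rho}$ and $\overline{\rho}^{\rm ss}$: the former says $\overline{\psi}\in W_D(\overline{\rho})$ iff $W_{\op{GL}_2}(\overline{\rho})\cap {\rm JH}\bigl(\overline{\Theta([\overline{\psi}])}\bigr)\neq\emptyset$, and the latter gives the analogous criterion for $\overline{\psi}\in W_D(\overline{\rho}^{\rm ss})$. Since $W_{\op{GL}_2}(\overline{\rho})\subseteq W_{\op{GL}_2}(\overline{\rho}^{\rm ss})$ in the non-split case, we obtain $W_D(\overline{\rho})\subseteq W_D(\overline{\rho}^{\rm ss})$ for free, so it suffices to determine which members of each piece $W_D^{\vec v}(\overline{\rho}^{\rm ss})$ actually lie in $W_D(\overline{\rho})$.

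Next I would invoke the Breuil--Pa\v{s}k\={u}nas description (recalled from \cite{Breuil-Paskunas}, \cite{MR3274546}) of $W_{\op{GL}_2}(\overline{\rho})$ for generic non-split reducible $\overline{\rho}$: parametrising $W_{\op{GL}_2}(\overline{\rho}^{\rm ss})$ by $\vec v\in\{0,1\}^{\ZZ/f\ZZ}$ through weights $\sigma_{\vec v}$, the subset $W_{\op{GL}_2}(\overline{\rho})$ is precisely $\{\sigma_{\vec v}:\vec v\leq \vec v(\overline{\rho})\}$ under the partial order of \cite[\S 11]{Breuil-Paskunas}. The partition $W_D(\overline{\rho}^{\rm ss})=\bigsqcup_{\vec v}W_D^{\vec v}(\overline{\rho}^{\rm ss})$ is set up so that $\overline{\psi}\in W_D^{\vec v}(\overline{\rho}^{\rm ss})$ records which $\op{GL}_2$-weight $\sigma_{\vec v}\in W_{\op{GL}_2}(\overline{\rho}^{\rm ss})$ realises the intersection with ${\rm JH}\bigl(\overline{\Theta([\overline{\psi}])}\bigr)$. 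Combining these two inputs yields the equivalences
\[
\overline{\psi}\in W_D(\overline{\rho})
\;\Longleftrightarrow\;
\exists\,\vec v\leq\vec v(\overline{\rho}):\;\sigma_{\vec v}\in {\rm JH}\bigl(\overline{\Theta([\overline{\psi}])}\bigr)
\;\Longleftrightarrow\;
\overline{\psi}\in W_D^{\vec v}(\overline{\rho}^{\rm ss})\ \text{for some}\ \vec v\leq\vec v(\overline{\rho}),
\]
which is the desired identity.

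The main obstacle, and the step that will require the most care, is justifying that the collection $\{W_D^{\vec v}(\overline{\rho}^{\rm ss})\}_{\vec v}$ really is a disjoint decomposition compatible with the Breuil--Pa\v{s}k\={u}nas partial order. Concretely one needs that for each $\overline{\psi}\in W_D(\overline{\rho}^{\rm ss})$ there is a unique $\vec v$ with $\sigma_{\vec v}\in W_{\op{GL}_2}(\overline{\rho}^{\rm ss})\cap{\rm JH}\bigl(\overline{\Theta([\overline{\psi}])}\bigr)$; this must be extracted from the explicit parametrisation by $(\vec w,\vec d)$ given in Theorem \ref{26} (and its irreducible analogue Theorem \ref{thm::irreducible case}), together with the description of the Jordan--H\"older factors of the reduction of a tame cuspidal type. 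Once this uniqueness and the translation of the partial order from $\op{GL}_2$-weights to quaternionic weights $\overline{\psi}_{\vec w,\vec d}$ are in place, the theorem follows by reading off the pieces indexed by $\vec v\leq\vec v(\overline{\rho})$ and taking the disjoint union.
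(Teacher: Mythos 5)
Your overall plan is essentially the paper's: combine Proposition \ref{prop::serre wts} with the non-split Breuil--Pa\v{s}k\={u}nas description (Lemma \ref{18}) to get $W_D(\overline{\rho})=\bigcup_{\vec v\leq\vec v(\overline{\rho})}W_{D,\vec v}(\overline{\rho}^{\rm ss})$, and then use the partition $W_D(\overline{\rho}^{\rm ss})=\bigsqcup_{\vec v}W_D^{\vec v}(\overline{\rho}^{\rm ss})$ and its compatibility with the partial order (Proposition \ref{27}) to rewrite this union as a disjoint union. So the architecture is right.

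However, the ``concretely one needs'' sentence misstates the compatibility property, and that misstatement would lead a write-up astray. You ask for uniqueness of $\vec v$ with $\sigma_{\vec v}\in W_{\op{GL}_2}(\overline{\rho}^{\rm ss})\cap{\rm JH}\bigl(\overline{\Theta([\overline{\psi}])}^{\rm ss}\bigr)$. That condition on $\overline\psi$ defines the larger sets $W_{D,\vec v}(\overline{\rho}^{\rm ss})$ of the paper, and those are \emph{not} pairwise disjoint; $W_D^{\vec v}$ is strictly smaller in general. Indeed Proposition \ref{27}(v) gives $W_D^{\vec v}=W_{D,\vec v}\setminus\bigcup_{\vec v'<\vec v}W_{D,\vec v'}$, and Proposition \ref{27}(i) shows $W_D^{\vec v}=\emptyset$ when $\ell(\lambda_{\vec v})=f$ even though $W_{D,\vec v}\neq\emptyset$, so overlaps do occur. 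The correct statement, proved in Proposition \ref{27}(iii)--(v), is that the set of $\vec v$ with $\sigma_{\vec v}\in{\rm JH}\bigl(\overline{\Theta([\overline{\psi}])}^{\rm ss}\bigr)$ has a unique \emph{minimum} for $\leq$, and $\overline\psi$ lies in $W_D^{\vec v}$ exactly for that minimum. This minimality is what forces the label of $\overline\psi$ to fall below $\vec v(\overline{\rho})$ as soon as some $\vec v_1\leq\vec v(\overline{\rho})$ with $\sigma_{\vec v_1}\in{\rm JH}$ exists, which is the content of the second equivalence in your chain. With the uniqueness replaced by this minimality, the rest of your plan goes through and matches the paper's proof.
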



\subsection{Acknowledgements}

We thank Prof. Yongquan Hu for several interesting discussions during the preparation of the paper. We thank Prof. Christophe Breuil and Prof. Florian Herzig for their comments on an earlier draft. The work is supported by National Natural Science Foundation of China Grants 11971028.

\section{Preliminaries}\label{s2}

We keep the notations as in the introduction. Let $K$ be unramified of degree $f$ over $\mathbb{Q}_p$ with residue field $k.$ Let $l$ be the quadratic extension of $k$ in $\overline{k}.$ Let $E$ be a finite extension of $\mathbb{Q}_p$ with integer ring $\O$ uniformizer $\varpi_E$ and residue field $\mathbb{F}.$ We assume $\mathbb{F}$ is sufficiently large. We fix an embedding $\overline{\iota}: l \hookrightarrow \mathbb{F}$. Let $\overline{\kappa}_{0}: k \hookrightarrow \mathbb{F}$ be given by $\overline{\iota}^{p^f+1}=\overline{\kappa}_{0}\circ\op{Nm}_{l/k}$. We define embeddings $\overline{\kappa}_{i}: k \hookrightarrow \mathbb{F}$ 
by $\overline{\kappa}_{i+1}^{p}=\overline{\kappa}_{i}$ for $i \in \ZZ$.

\subsection{Irreducible $\FF$-representations of $\op{GL_2}(k)$}

Irreducible $\FF$-representations of $\mathrm{GL}_2(k)$ are of the form: 
\begin{equation}\label{1}
    \overline{\sigma}_{\vec{t}, \vec{s}}:=\otimes_{j=0}^{f-1}\left(\operatorname{det}^{t_{j}} \operatorname{Sym}^{s_{j}} k^{2}\right) \otimes_{k, \overline{\kappa}_{-j}} \mathbb{F},   
\end{equation}
where $0\leq s_j,$ $t_j\leq p-1$ and not all $t_j$ are equal to $p-1$. An irreducible $\FF$-representations of $\mathrm{GL}_2(k)$ can also be written in the form: 
\begin{equation}\label{8}
    \operatorname{Sym}^{r_{0}} \FF^{2} \otimes_{\FF}\left(\operatorname{Sym}^{r_{1}} \FF^{2}\right)^{\mathrm{Fr}} \otimes \cdots \otimes_{\FF}\left(\mathrm{Sym}^{r_{f-1}} \FF^{2}\right)^{\mathrm{Fr}^{f-1}} \otimes_{\FF} (\overline{\eta} \circ \op{det})
\end{equation}
where the $r_i$ are integers between $0$ and $p-1$, $\overline{\eta}$ is a smooth character $k^{\times} \rightarrow \FF^{\times}$, $\mathrm{GL}_2(k)$ acts on the first $\operatorname{Sym}$ via the fixed embedding $\overline{\kappa}_0:k \hookrightarrow \FF$ and on the others via twists by powers of the Frobenius $\mathrm{Fr}$ where $\operatorname{Fr}(x):=x^{p}\left(x \in \mathbb{F}_{p^{f}}\right)$. 
Such representation is denoted by $\left(r_{0}, \ldots, r_{f-1}\right) \otimes (\overline{\eta} \circ \op{det})$. The relation between the above two descriptions of irreducible $\FF$-representations of $\mathrm{GL}_2(k)$ is given by the following lemma. 

\begin{lemma}\label{2.1}
   We have   $\overline{\sigma}_{\vec{t}, \vec{s}}=(s_0,...,s_{f-1})\otimes (\overline{\kappa}_0\circ \op{det}^{\sum_{i=0}^{f-1}p^it_{i}}).$      
\end{lemma}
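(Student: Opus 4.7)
The plan is a direct unwinding of the two parametrizations, keyed on the relation between the embeddings $\overline{\kappa}_{-j}$ and Frobenius twists of $\overline{\kappa}_0$. First I would record the identity $\overline{\kappa}_{-j} = \mathrm{Fr}^j \circ \overline{\kappa}_0$ as maps $k \to \mathbb{F}$, which follows by induction from the defining relation $\overline{\kappa}_{i+1}^p = \overline{\kappa}_i$ (equivalently $\overline{\kappa}_i = \mathrm{Fr}\circ \overline{\kappa}_{i+1}$): iterating gives $\overline{\kappa}_0 = \mathrm{Fr}^j \circ \overline{\kappa}_j$ and $\overline{\kappa}_{-j} = \mathrm{Fr}^j \circ \overline{\kappa}_0$.

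Next I would analyze the $j$-th tensor factor $(\det^{t_j} \mathrm{Sym}^{s_j} k^2) \otimes_{k,\overline{\kappa}_{-j}} \mathbb{F}$ separately. For the symmetric power part, base change along $\overline{\kappa}_{-j} = \mathrm{Fr}^j \circ \overline{\kappa}_0$ identifies $\mathrm{Sym}^{s_j} k^2 \otimes_{k,\overline{\kappa}_{-j}} \mathbb{F}$ with $\mathrm{Sym}^{s_j}\mathbb{F}^2$ where $g \in \mathrm{GL}_2(k)$ acts through $\overline{\kappa}_{-j}$ applied entry-wise; since entry-wise application of $\mathrm{Fr}^j \circ \overline{\kappa}_0$ is precisely the $j$-th Frobenius twist of the action through $\overline{\kappa}_0$, this factor is $(\mathrm{Sym}^{s_j}\mathbb{F}^2)^{\mathrm{Fr}^j}$ in the notation of \eqref{8}. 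For the determinant part, the character $g \mapsto \det(g)^{t_j}$ valued in $k^\times$ becomes, after base change along $\overline{\kappa}_{-j}$, the character $g \mapsto \overline{\kappa}_{-j}(\det g)^{t_j} = \overline{\kappa}_0(\det g)^{p^j t_j}$.

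Finally I would assemble the tensor product over $j = 0, \ldots, f-1$: the Sym factors give exactly $(s_0,\ldots,s_{f-1})$, and the determinant characters combine multiplicatively into $\overline{\kappa}_0 \circ \det^{\sum_{j=0}^{f-1} p^j t_j}$, yielding the claimed equality.

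There is no real obstacle here — the lemma is a bookkeeping statement. The only subtle point is keeping the sign conventions straight between $\overline{\kappa}_{-j}$ (with the negative index) and the positive power $p^j$ of Frobenius; handling this via the identity $\overline{\kappa}_{-j} = \mathrm{Fr}^j \circ \overline{\kappa}_0$ established at the outset removes the ambiguity and reduces the proof to the two componentwise identifications described above.
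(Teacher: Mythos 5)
Your proposal is correct and follows essentially the same route as the paper's proof: the paper simply cites the two componentwise identities $\op{Sym}^{s_j}k^{2}\otimes_{k, \overline{\kappa}_{-j}} \mathbb{F}=(\op{Sym}^{s_j}\FF^{2})^{\mathrm{Fr}^j}$ and $\op{det}^{t_j}\otimes_{k, \overline{\kappa}_{-j}} \mathbb{F}=\overline{\kappa}_0\circ \op{det}^{p^jt_j}$, and you derive exactly these from the relation $\overline{\kappa}_{-j}=\mathrm{Fr}^j\circ\overline{\kappa}_0$. The only difference is that you fill in the bookkeeping that the paper leaves implicit.
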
 
\begin{proof}
    This follows from $\op{Sym}^{s_j}k^{2}\otimes_{k, \overline{\kappa}_{-j}} \mathbb{F}=(\op{Sym}^{s_j}\FF^{2})^{\mathrm{Fr}^j}$ and $\op{det}^{t_j}\otimes_{k, \overline{\kappa}_{-j}} \mathbb{F}=\overline{\kappa}_0\circ \op{det}^{p^jt_j}$ for $0\leq j\leq f-1$.
\end{proof}
\subsection{Cuspidal types}
Let $\psi: l^{\times} \rightarrow \mathcal{O}^{\times}$ be a multiplicative character which does not factor through the norm $\op{Nm}_{l/k}:l^{\times}\rr k^{\times}$. Let $\Theta(\psi)$ be the cuspidal type of irreducible $E$-representations of $\op{GL}_2(k)$ as in \cite[\S 1]{MR2392355}.  Let $\overline{\Theta}(\psi)^{{\rm ss}}$ be the semisimplification of the reduction mod $\varpi_E$ of any $\op{GL}_2(k)$-stable $\O$-lattice in $\Theta(\psi).$ Then the Jordan-H\"older factors of $\overline{\Theta}(\psi)^{{\rm ss}}$ are described in \cite[Prop. 1.3]{MR2392355}. We follow the presentation of \cite[\S 3.3]{MR3323575}.

We can write $\psi$ in the form $[\overline{\iota}]^{(q+1) b+1+c}$, where $0 \leq b \leq q-2$, $0 \leq c \leq q-1$ and $[\overline{\iota}]$ is the Teichm\"uller lift of $\overline{\iota}$. Write
$c=\sum_{i=0}^{f-1} c_{i} p^{i}$, where $0 \leq c_{i} \leq p-1$. If $J \subseteq \mathcal{S}:=\{0,...,f-1\}$, we set $J_{0}=J \triangle\{f-1\}$, i.e. $J_0=J\cup\{f-1\}$ if $f-1\notin J$, and $J_0=J\backslash\{f-1\}$ if $f-1\in J$.
We define $\mathcal{P}_{\Theta(\psi)}$ to be the collection of subsets of $\SS$ consisting of those $J$ satisfying the conditions: 
\begin{itemize}
    \item if $j\in J$ and $j-1\notin J_0$ then $c_j\neq p-1$, and
    \item if $j\notin J$ and $j-1\in J_0$ then $c_j\neq 0$.
\end{itemize}
For any $J\in\mathcal{P}_{\Theta(\psi)}$ we define $s_{J, i}$ and $t_{J, i}$ by 
\begin{equation}\label{2}
    s_{J, i}=\left\{\begin{array}{cl}p-1-c_{i}-\delta_{\left(J_{0}\right)^{c}}(i-1) & \text { if } i \in J \\ c_{i}-\delta_{J_{0}}(i-1) & \text { if } i \notin J\end{array}\right., 
\end{equation}
\begin{equation}\label{3}
    t_{J, i}=\left\{\begin{array}{cl}c_{i}+\delta_{J^{c}}(i-1) & \text { if } i \in J\\
     0 & \text { if } i \notin J\end{array}\right.,
\end{equation}
where, for $J\subseteq \SS$, $\delta_J(i)=1$ if $i\in J$, and 0 otherwise.

\begin{lemma}\label{6}
The Jordan-H\"older factors of $\overline{\Theta}(\psi)^{{\rm ss}}$ are parameterised by $\mathcal{P}_{\Theta(\psi)}$ as follows: for $J\in\mathcal{P}_{\Theta(\psi)}$, let
\begin{equation}\label{4}
    \overline{\Theta}(\psi)_{J}:=\overline{\sigma}_{\vec{t}_J, \vec{s}_J} \otimes(\overline{\kappa}_{0}\circ \mathrm{det}^{b+\delta_{J}(0) \delta_{J}(f-1)+\delta_{J^c}(0) \delta_{J^c}(f-1)}).\footnote{We believe that the coefficient of $b$ in (\ref{4}) should be $1$ instead of $q+1$ in \S 3.3 of \cite{MR3323575}.} 
\end{equation}
\end{lemma}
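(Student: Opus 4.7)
The plan is to deduce the lemma from Breuil's determination of the Jordan--H\"older factors of $\overline{\Theta}(\psi)^{\rm ss}$ in \cite[Prop.~1.3]{MR2392355}, by translating his parameterization into the notation (\ref{2})--(\ref{4}) adopted here and recorded in \cite[\S 3.3]{MR3323575}.

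First I would recall Breuil's statement: the JH factors are indexed by subsets $J \subseteq \SS$ together with a combinatorial non-degeneracy condition involving the digits $c_i$, and each factor is explicitly given as a product of symmetric powers twisted by a character of $k^{\times}$. The asymmetric role of the index $f-1$, encoded here by the operation $J \mapsto J_0 = J \triangle \{f-1\}$, originates from the cuspidal nature of $\Theta(\psi)$: since $\psi$ is a character of $l^{\times}$ rather than of a split torus $k^{\times} \times k^{\times}$, the exponent $(q+1)b+1+c$ carries a canonical shift at the last digit relative to the principal series case, and this shift propagates through the entire combinatorial recipe. I would then verify that the two conditions defining $\mathcal{P}_{\Theta(\psi)}$ (namely $c_j \neq p-1$ when $j \in J, j-1 \notin J_0$, and $c_j \neq 0$ when $j \notin J, j-1 \in J_0$) match Breuil's non-degeneracy conditions, handling separately the cases $f-1 \in J$ and $f-1 \notin J$ where the discrepancy between $J$ and $J_0$ is located.

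Next I would verify (\ref{2}) and (\ref{3}) digit-by-digit. For $i \in J$, the Serre weight digit is $p-1-c_i$ corrected by the indicator $\delta_{(J_0)^c}(i-1)$, because a ``carry'' from position $i-1$ shifts the digit down by one; symmetrically for $i \notin J$ the formula $c_i - \delta_{J_0}(i-1)$ records the analogous behaviour. The formulas for $t_{J,i}$ encode the partial determinant twist contributed at each index. Having matched these, the Jordan--H\"older factor indexed by $J$ is $\overline{\sigma}_{\vec{t}_J,\vec{s}_J}$ up to a global twist by a power of $\overline{\kappa}_0 \circ \det$.

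Finally, I would pin down the global twist in (\ref{4}) by computing central characters. The contribution $b$ comes from the factor $[\overline{\iota}]^{(q+1)b} = ([\overline{\iota}] \circ \op{Nm}_{l/k})^b$ in $\psi$, which under the defining relation $\overline{\iota}^{q+1} = \overline{\kappa}_0 \circ \op{Nm}_{l/k}$ yields exactly the twist $\overline{\kappa}_0 \circ \det^{b}$ on reduction. The correction $\delta_J(0)\delta_J(f-1) + \delta_{J^c}(0)\delta_{J^c}(f-1)$ absorbs the wrap-around interaction between positions $i=0$ and $i=f-1$ forced by the cuspidal shift in the digit recipe. The main obstacle is the careful bookkeeping of this boundary twist: it is precisely where the erratum flagged in the footnote arises, since a naive propagation of the norm relation produces $(q+1)b$ in place of the correct $b$, and detecting the discrepancy requires evaluating the central character of $\overline{\Theta}(\psi)_J$ on a specific regular element of $\op{GL}_2(k)$ and comparing it with the reduction of the central character of $\Theta(\psi)$ computed directly from $\psi$.
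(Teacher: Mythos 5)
Your plan follows the same route as the paper, which simply cites Diamond's result: the proof given is ``See \cite[Prop.~1.3]{MR2392355} or \cite[\S 3.3]{MR3323575},'' and your outline is a (considerably more detailed) expansion of how one would carry out that verification. Two small points, though.

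First, a misattribution: Proposition 1.3 of \cite{MR2392355} is due to Diamond (the paper ``A correspondence between representations of local Galois groups and Lie-type groups''), not Breuil; Breuil's role here is via \cite{Breuil-Paskunas} and \cite{MR3274546}, which are used later for the $W_{\op{GL}_2}$ description, not for the reduction of the cuspidal type.

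Second, and more substantively, your suggested device for detecting the footnote's correction does not quite work as stated. You propose to compare the \emph{central character} of $\overline{\Theta}(\psi)_J$ with that of $\Theta(\psi)$; but the central character only sees scalar matrices, i.e.\ it is a character of $k^\times$, and since $(q+1)b \equiv 2b \equiv 2\cdot 1 \cdot b \pmod{q-1}$, the two putative exponents $b$ and $(q+1)b$ give identical central characters. The confusion shows in the phrase ``central character on a regular element'': regular semisimple elements are not central. What does distinguish the two is the Brauer character (or equivalently, of the determinant of the representation as a one-dimensional character of $\op{GL}_2(k)$) evaluated on a regular semisimple element of the nonsplit torus $l^\times \hookrightarrow \op{GL}_2(k)$. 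For such an element $g$ with ``eigenvalue'' $\alpha \in l^\times \setminus k^\times$, the twist $\overline{\kappa}_0 \circ \det^a$ contributes $\overline{\iota}(\alpha)^{(q+1)a}$, and the exponents $(q+1)b$ and $(q+1)^2 b \equiv 2(q+1)b \pmod{q^2-1}$ generally differ; alternatively, one can directly match the exponent of $\overline{\iota}$ in the Brauer character of $\overline{\Theta}(\psi)^{\mathrm{ss}}$ against the sum over $J$ of the Brauer characters of $\overline{\Theta}(\psi)_J$. With that adjustment the plan is sound.
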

\begin{proof}
See \cite[Prop. 1.3]{MR2392355} or \cite[\S 3.3]{MR3323575}.
\end{proof}

A subset $J$ of $\SS$ can be identified with an $f$-tuple $\vec{u}:=(u_0,...,u_{f-1})\in\{0,1\}^{\ZZ/f\ZZ}$, where $u_i=0$ if $i\notin J$ and $u_i=1$ if $i\in J$. For $\vec{u}:=(u_0,...,u_{f-1})\in\{0,1\}^{\ZZ/f\ZZ}$, define 
    $\vec{u}_0=(u_{0,0},...,u_{0,f-1})$ by letting $u_{0,i}=u_i$ for $0\leq i<f-1$ and $u_{0,f-1}=1-u_{f-1}$. 
    If $\vec{u}$ corresponds to $J$, then $\vec{u}_0$ corresponds to $J_0$ under the above identification. If $\vec{u}\in \{0,1\}^{\ZZ/f\ZZ}$ corresponds to $J \in \mathcal{P}_{\Theta(\psi)},$ then $\vec{u}$ satisfies
       \begin{itemize}
        \item if $u_i=1$ and $u_{0,j-1}=0$ then $c_j\neq p-1$, and
        \item if $u_i=0$ and $u_{0,j-1}=1$ then $c_j\neq 0$. 
    \end{itemize}
We abuse notation by letting $\mathcal{P}_{\Theta(\psi)}$ denote the set of $\vec{u}$'s satisfying above conditions.

\begin{definition}
    For $\vec{u}:=(u_0,...,u_{f-1})\in\{0,1\}^{\ZZ/f\ZZ}$, we define: 
    \begin{equation}\label{2.0}
        s_{\vec{u}, i}=\left\{\begin{array}{cl}p-2+u_{0,i-1}-c_{i} & \text { if } u_i=1 \\ c_{i}-u_{0,i-1} & \text { if } u_i=0\end{array}\right., 
    \end{equation}
    \begin{equation}\label{3.0}
        t_{\vec{u}, i}=\left\{\begin{array}{cl}c_{i}+(1-u_{i-1}) & \text { if } u_i=1\\
         0 & \text { if } u_i=0\end{array}\right., 
    \end{equation}
    \begin{equation}\label{4.0}
        \overline{\Theta}(\psi)_{\vec{u}}:=\overline{\sigma}_{\vec{t}_{\vec{u}}, \vec{s}_{\vec{u}}} \otimes(\overline{\kappa}_{0}\circ \mathrm{det}^{b+u_0u_{f-1}+(1-u_0)(1-u_{f-1})}). 
    \end{equation}
    Then if $\vec{u}\in \mathcal{P}_{\Theta(\psi)}$ corresponds to $J \in \mathcal{P}_{\Theta(\psi)}$, we have $s_{\vec{u}, i}=s_{J, i}$, $t_{\vec{u}, i}=t_{J, i}$, and $\overline{\Theta}(\psi)_{\vec{u}}=\overline{\Theta}(\psi)_{J}$. 
\end{definition}

\subsection{Automorphic forms on quaternion algebras over totally real field}
Let $F$ be a totally real field and $B$ be a quaternion algebra over $F.$ We assume $B$ is either \emph{definite} (i.e. it is ramified at all infinite places), or $B$ is \emph{indefinite} (i.e. it splits at exactly one infinite place of $F$). Let $\O_B$ be a fixed maximal order of $B.$ Since our main result is on the local side, we simplify the global setup by assuming that $p$ is {\em inert} in $F.$ Let $v$ denote the unique place of $B$ over $p.$

We define the space of automorphic forms associated to $B$ as in \cite[\S 5.2]{Hu-Wang-JL1}. Up to normalization, it is identical to the space used in \cite{Gee-Savitt} (resp. \cite{Breuil-Diamond}) when $B$ is definite (resp. indefinite). Let $\overline{r}: {\rm Gal}(\overline{F}/F) \to {\rm GL}_2(\mathbb{F})$ be an absolutely irreducible totally odd representation, which is modular in the sense of \cite[\S 3.1]{Breuil-Diamond}. Assume $\overline{r}|_{{\rm Gal}(\overline{F}/F(\sqrt[p]{1}))}$ is absolutely irreducible and, if $p = 5,$ the image of $\overline{r}({\rm Gal}(\overline{F}/F(\sqrt[p]{1})))$ in ${\rm PGL}_2(\mathbb{F})$ is not isomorphic to ${\rm PSL}_2(\mathbb{F}_5).$ Then $\overline{r}$ gives a maximal ideal of the abstract Hecke algebra which acts on the space of modular forms in the usual way, see \cite[\S 5.2]{Hu-Wang-JL1} \cite{Gee-Savitt} \cite{Breuil-Diamond}. We denote by $\pi^B(\overline{r})$ the admissible smooth representation of $(B\otimes_{\mathbb{Q}} \mathbb{A}_f)^{\times}$ over $\mathbb{F}$ given by \cite[(5.4)]{Hu-Wang-JL1}. When $B$ is indefinite, $\pi^B(\overline{r})$ is the representation denoted by $\pi_D(\overline{\rho})$ in \cite[\S 3.1]{Breuil-Diamond} up to twist. We assume the hypothesis (H0) of \cite{Breuil-Diamond} is satisfied. Then by \cite[Cor. 3.2.3]{Breuil-Diamond}, $\pi^B(\overline{r})$ is non-zero. 

Let $\sigma$ be an irreducible representation of $\O_{B_v}^{\times}$ over $\mathbb{F}.$ We say that $\sigma$ is a Serre weight for $\overline{r}$ with respect to $B$ (at $v$) if 
\begin{equation} \label{eq::def-SW}
{\rm Hom}_{\O_{B_v}^{\times}} (\sigma,\pi^B(\overline{r}) ) \neq 0.
\end{equation}
Let $W_B(\overline{r})$ denote the set of $\sigma$'s satisfying \eqref{eq::def-SW}. If $B$ splits at $v,$ \cite{GLS} proves that the set $W_B(\overline{r})$ depends only on $\overline{r}|_{G_{I_v}},$\footnote{In fact \cite{GLS} establishes this also in non-generic case.} where $I_{F_v}$ is the inertia subgroup of $G_{F_v} : = {\rm Gal}(\overline{F}_v/F_v)$. If moreover $\overline{r}|_{G_{F_v}}$ is generic in the sense of \cite{Breuil-Paskunas}, then the set $W_B(\overline{r})$ is equal to the set $W_{\rm GL_2}(\overline{\rho})$ with $\overline{\rho} : = \overline{r}|_{G_{F_v}}(1).$ The generic condition of \cite{Breuil-Paskunas} and the set  $W_{\rm GL_2}(\overline{\rho})$ are recalled in \S \ref{sec::SW for GL2} with $K=F_v$.

Assume $B$ is ramified at $v.$ Then irreducible representations of $\O_{B_v}^{\times}$ over $\mathbb{F}$ are the same as multiplicative characters $\overline{\psi}: l^{\times} \to \mathbb{F}^{\times},$ where $l$ is the quadratic extension of the residue field $k$ of $F_v.$ If moreover $\overline{r}|_{G_{F_v}}$ is generic, then \cite[Thm. 8.3]{Gee-Savitt} shows that $W_B(\overline{r}) $ depends only on $\overline{r}|_{I_{F_v}}$ and it only consists of characters of type I defined in \cite[Def. 3.1]{Gee-Savitt}. Recall $\overline{\psi}: l^{\times} \rightarrow \mathbb{F}^{\times}$ is a type I character if it does not factor through the norm $\op{Nm}_{l/k}:l^{\times}\rr k^{\times}$. In the following, we will write $D$ for the quaternion algebra $B_v$ over $F_v.$ So we denote $W_D(\overline{\rho}) : =W_B(\overline{r}) $ with $\overline{\rho} : = \overline{r}|_{G_{F_v}}(1) $ as above.

\begin{proposition}\label{prop::serre wts} 
Let $\overline{\psi}: l^{\times} \rightarrow \mathbb{F}^{\times}$ be a type I character. Then $\overline{\psi} \in W_D(\overline{\rho})$ if and only if ${\rm JH}\left(\overline{\Theta}(\psi)^{{\rm ss}}\right)\cap  W_{\mathrm{GL}_2}(\overline{\rho}) \neq \emptyset, $ where $\psi$ denotes the Techm\"uller lift of $\overline{\psi}.$
\end{proposition}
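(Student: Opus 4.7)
The plan is to characterize both sides of the equivalence in terms of the same crystalline-lift condition on $\overline{\rho}$ and then conclude by matching. On the quaternionic side I would invoke \cite[Thm.~8.3]{Gee-Savitt}, and on the $\mathrm{GL}_2$ side I would invoke the (generic, unramified) Breuil--M\'ezard formula for reductions of tame cuspidal types. The key point is that the inertial local Langlands correspondence pairs the character $\psi$ of $l^{\times}$ (the Teichm\"uller lift of $\overline{\psi}$) with the cuspidal type $\Theta(\psi)$ of $\mathrm{GL}_2(k)$, and both $W_D(\overline{\rho})$ and the set of $\mathrm{GL}_2$-Serre weights can be read off from the locus of potentially Barsotti--Tate lifts of $\overline{\rho}$ of this type.

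\textbf{Execution.} First, since $\overline{\psi}$ is type I, the Teichm\"uller lift $\psi=[\overline{\psi}]$ does not factor through $\op{Nm}_{l/k}$, so $\Theta(\psi)$ is a well-defined irreducible $E$-representation of $\mathrm{GL}_2(k)$ and $\overline{\Theta}(\psi)^{{\rm ss}}$ is computed by Lemma~\ref{6}. Inertial local Langlands attaches to $\Theta(\psi)$ a two-dimensional inertial type, which is the inertial type of those potentially crystalline representations of $G_K$ with Hodge--Tate weights $\{0,1\}$ at every embedding that become crystalline over the unramified quadratic extension of $K$. Under the generic hypothesis on $\overline{\rho}$, Theorem~8.3 of \cite{Gee-Savitt}, which is formulated via the existence of such potentially Barsotti--Tate lifts of the appropriate tame cuspidal type, translates the statement $\overline{\psi}\in W_D(\overline{\rho})$ into the existence of a potentially Barsotti--Tate lift of $\overline{\rho}$ of inertial type $\Theta(\psi)$. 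On the split side, the Breuil--M\'ezard conjecture for $\mathrm{GL}_2(K)$ with $K$ unramified, known in the generic case by the work of Kisin, Gee, and others, equates the existence of such a lift with the condition
\[
\mathrm{JH}\bigl(\overline{\Theta}(\psi)^{{\rm ss}}\bigr)\cap W_{\mathrm{GL}_2}(\overline{\rho})\neq\emptyset.
\]
Chaining these two equivalences gives the proposition.

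\textbf{Main obstacle.} The substantive content is the compatibility of the two descriptions: one must verify that the potentially Barsotti--Tate lifts produced by the Gee--Savitt criterion on the $D$-side are indeed of the inertial type attached to $\Theta(\psi)$, and that the genericity condition imposed on $\overline{\rho}$ is simultaneously strong enough for both Gee--Savitt and the generic Breuil--M\'ezard input to apply. In the generic unramified setting this matching is exactly what is indicated in Remark~5.10 of \cite{Gee-Geraghty}, and once it is in place the proof is a formal combination of the two ingredients.
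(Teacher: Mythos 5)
Your proposal is correct, but it takes a genuinely different route from the paper. The paper's proof is \emph{global}: starting from $\overline{\psi}\in W_D(\overline{\rho})$, it applies \cite[Lem.~3.2.1]{Breuil-Diamond} on the quaternionic side to produce a characteristic-zero automorphic form $\pi^B$ on $B^{\times}$ lifting $\overline{r}$ whose $v$-component has inertial type $[\psi\oplus\psi^{q_v},N=0]$, transfers $\pi^B$ to $\mathrm{GL}_2(\mathbb{A}_F)$ via the global Jacquet--Langlands correspondence, and then applies \cite[Lem.~3.2.1]{Breuil-Diamond} once more on the split side (using that $\Theta(\psi)$ is a $K$-type for the resulting supercuspidal $\pi_v$) to conclude that $\overline{r}$ is modular of some weight in $\mathrm{JH}(\overline{\Theta}(\psi)^{\mathrm{ss}})$; the converse reverses these steps. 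Your argument is instead \emph{local}: it reads off both $W_D(\overline{\rho})$ (via \cite[Thm.~8.3]{Gee-Savitt}) and $W_{\mathrm{GL}_2}(\overline{\rho})$ (via the generic, unramified Breuil--M\'ezard for tame types, as in \cite{GLS}) from the nonvanishing of the relevant potentially Barsotti--Tate deformation space, and chains the two equivalences. This is exactly the mechanism indicated in \cite[Rmk.~5.10]{Gee-Geraghty}, which the paper acknowledges in a footnote but does not itself use. Your route is more conceptual and purely local at the cost of invoking the (existence part of the) Breuil--M\'ezard conjecture, whereas the paper's route stays closer to the modular-forms definition of $W_D$ and needs only modularity lifting and global Jacquet--Langlands. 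One small imprecision in your write-up: a potentially Barsotti--Tate lift has an inertial \emph{type} $\tau$, not the $\mathrm{GL}_2(k)$-representation $\Theta(\psi)$ itself; you should say ``of inertial type $\tau=\psi\oplus\psi^q$, whose associated $K$-type under inertial local Langlands is $\Theta(\psi)$.''
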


\begin{proof}
Let $B$ be a definite quaternion algebra over $F$ which ramified at $v.$ Via the natural projection map $\O_{B_v}^{\times} \to l^{\times},$ $\psi$ can be viewed as a character of $\O_{B_v}^{\times}.$ By the classical local Langlands and Jacquet-Langlands correspondence, $\psi$ is a $K$-type for the Weil-Deligne type $[\psi\oplus \psi^{q_v} , N = 0]$ in the sense of \cite[\S 3.2]{Breuil-Diamond}, where $q_v$ is the cardinality of $k$. By \cite[Lem. 3.2.1]{Breuil-Diamond}, there is an automorphic representation $\pi^B$ of $(B\otimes_{\mathbb{Q}} \mathbb{A})^{\times}$ whose weight is the trivial representation, such that $\overline{r} \cong r_{\pi^B} \mod \varpi_E$ and $r_{\pi^B}|_{G_{F_v}}$ is of inertial type $[\psi\oplus \psi^{q_v} , N=0].$ Here $ r_{\pi^B}$ is the Galois representation associated to $\pi^B.$

By global Jacquet-Langlands correspondence, there is an automorphic representation $\pi$ of ${\rm GL}_2(\mathbb{A}_F)$ with the same infinitesimal character as the trivial representation such that $\overline{r} \cong r_{\pi} \mod \varpi_E,$ where $ r_{\pi}$ is the Galois representation associated to $\pi,$ and $r_{\pi}|_{G_{F_v}}$ is of inertial type $[\psi\oplus \psi^{q_v} , N=0].$ Then $\Theta(\psi)$ is a $K$-type (in the sense of  \cite[\S 3.2]{Breuil-Diamond}) of the supercuspidal representation $\pi_v$, see the proof of \cite[Lem. 3.3]{Gee-Savitt}. By \cite[Lem. 3.2.1]{Breuil-Diamond} again, $\overline{r}$ is modular of weight $\sigma,$ with respect to ${\rm GL}_2$ at $v,$ for some $\sigma\in {\rm JH}\left(\overline{\Theta}(\psi)^{{\rm ss}}\right).$ 

For the converse, we may reverse the above argument. This finishes the proof of the proposition.
\end{proof}

\subsection{Serre weights for $\mathrm{GL}_2$}\label{sec::SW for GL2}
We recall the explicit description of $W_{\mathrm{GL}_2}(\overline{\rho})$ associated to generic $\overline{\rho}$ in \cite[\S 11]{Breuil-Paskunas}. For our latter use, we slightly generalize the definition of \cite{Breuil-Paskunas}. Let $K$ be the unramified extension of degree $f$ over $\mathbb{Q}_p.$
\subsubsection{The reducible case}
Let $\left(x_{0}, \ldots, x_{f-1}, x\right)$ be $f+1$ variables. We define a set $\mathcal{R} \mathcal{D}\left(x_{0}, \ldots, x_{f-1},x\right)$ of $f$-tuples 
$\lambda(x_{0}, \ldots, x_{f-1},x):=\left(\lambda_{0}\left(x_{0},x\right), \ldots, \lambda_{f-1}\left(x_{f-1},x\right)\right)$ where $\lambda_{i}\left(x_{i},x\right) \in \mathbb{Z}[x] \pm x_{i}$ as below. Note that our $\mathcal{R} \mathcal{D}\left(x_{0}, \ldots, x_{f-1},p\right)$ is the same as $\mathcal{R} \mathcal{D}\left(x_{0}, \ldots, x_{f-1}\right)$ defined in \cite[\S 11]{Breuil-Paskunas}. \par
If $f=1$, $\lambda_{0}\left(x_{0},x\right) \in\left\{x_{0}, x-3-x_{0}\right\}$. If $f>1$, then:
\begin{enumerate}[label=(\roman*)]
    \item $\lambda_{i}\left(x_{i},x\right) \in\left\{x_{i}, x_{i}+1, x-2-x_{i}, x-3-x_{i}\right\}$ for $i \in\{0, \ldots, f-1\}$
    \item if $\lambda_{i}\left(x_{i},x\right) \in\left\{x_{i}, x_{i}+1\right\}$, then $\lambda_{i+1}\left(x_{i+1},x\right) \in\left\{x_{i+1}, x-2-x_{i+1}\right\}$
    \item if $\lambda_{i}\left(x_{i},x\right) \in\left\{x-2-x_{i}, x-3-x_{i}\right\}$, then $\lambda_{i+1}\left(x_{i+1},x\right) \in\left\{x-3-x_{i+1}, x_{i+1}+1\right\}$
\end{enumerate}
with the conventions that $x_{f}=x_{0}$ and $\lambda_{f}\left(x_{f},x\right)=\lambda_{0}\left(x_{0},x\right).$

We use a different parameterisation of the set $\mathcal{R} \mathcal{D}\left(x_{0}, \ldots, x_{f-1},x\right)$ compared to \cite[\S 11]{Breuil-Paskunas}, see Remark \ref{2.2.1} below. 
An $f$-tuple $\lambda$ of the set $\mathcal{R} \mathcal{D}\left(x_{0}, \ldots, x_{f-1},x\right)$ can be naturally identified with an 
$f$-tuple ${\vec{v}}:=(v_0,...,v_{f-1})\in\{0,1\}^{\ZZ/f\ZZ}$, where $v_i=0$ if $\lambda_{i}\left(x_{i},x\right) \in\left\{x_i, x_{i}+1\right\}$ and $v_i=1$ 
if $\lambda_{i}\left(x_{i},x\right) \in\left\{x-2-x_{i}, x-3-x_{i}\right\}$. We set $\ell(\lambda):=\#\{0\leq i\leq f-1 \mid v_i=1\}$. 
Let $\lambda_{\vec{v}}$ denote the $f$-tuple $\lambda$ parameterised by ${\vec{v}}$.\par
For $\lambda\in\mathcal{R} \mathcal{D}\left(x_{0}, \ldots, x_{f-1},x\right)$, we define: 
\begin{equation}\label{5}
    e(\lambda):=\left\{
    \begin{array}{cl}
        \frac{1}{2}\left(\sum_{i=0}^{f-1} x^{i}\left(x_{i}-\lambda_{i}\left(x_{i},x\right)\right)\right) &\text {if }\lambda_{f-1}\left(x_{f-1},x\right) \in\left\{x_{f-1}, x_{f-1}+1\right\}\\
        \frac{1}{2}\left(x^{f}-1+\sum_{i=0}^{f-1} x^{i}\left(x_{i}-\lambda_{i}\left(x_{i},x\right)\right)\right) & \text{otherwise}
    \end{array}\right.
\end{equation}
\begin{remark}\label{2.2.1}
    As explained in \cite[\S 11]{Breuil-Paskunas}, the set $\mathcal{R} \mathcal{D}\left(x_{0}, \ldots, x_{f-1}\right)$ can be naturally identified with the set of subsets $J$ of $\{0,\ldots,f-1\}$ as follows: 
    set $i\in J$ if and only if $\lambda_i(x_i)\in\{p-3-x_i, x_i+1\}$. Since $v_i=1$ in our parameterisation if and only if $i+1\in J$, our $\ell(\lambda)(x_0,\ldots,x_{f-1},x)$ is the same as the $\ell(\lambda)(x_0,\ldots,x_{f-1},p)$ in \cite[\S 11]{Breuil-Paskunas} which is defined as the cardinality of $J$. 
\end{remark}
We first consider the case where $\overline{\rho}$ is reducible split. 
Let $\omega_f$ denote the fundamental character of inertia group $I_K$ of niveau $f$ associated to $\overline{\kappa}_{0}$; that is, $\omega_f$ is the $reciprocal$ of the character defined by the composite
$$
I_{K} \stackrel{\operatorname{Art}_{K}^{-1}}{\longrightarrow} \mathcal{O}_{K}^{\times} \stackrel{\op{res}}{\longrightarrow} k^{\times} \stackrel{\overline{\kappa}_{0}}{\longrightarrow} \mathbb{F}^{\times}.
$$
Let $\omega_{2f}$ be the fundamental character of niveau $2f$ associated to the embedding $\overline{\iota}$.
\begin{lemma}\label{7}
    Let $\overline{\rho}: \operatorname{Gal}\left(\overline{\QQ}_p / K\right) \rightarrow \mathrm{GL}_{2}\left(\FF\right)$ be a continuous representation such that its restriction to inertia is: 
$$
\left(\begin{array}{cc}
\omega_{f}^{r_{0}+1+p\left(r_{1}+1\right)+\cdots+p^{f-1}\left(r_{f-1}+1\right)} & 0 \\
0 & 1
\end{array}\right) \otimes (\overline{\eta}\circ\op{res}\circ \op{Art}_K^{-1})
$$
with $-1 \leq r_{i} \leq p-2$, where $\overline{\eta}$ is a smooth character $k^{\times} \rightarrow \FF^{\times}$. Assume $0\leq r_i\leq p-3$ for all $i$ and not all $r_i$
equal to $0$ or equal to $p-3$. Then $W_{\op{GL}_2}(\overline{\rho})$, the set of $\op{GL}_2$-Serre weights associated to $\overline{\rho}$, consists of: 
$$
\sigma_{\vec{v}}(\overline{\rho}):=\lambda_{\vec{v}}(r_0,...,r_{f-1},p)\otimes (\overline{\eta}\circ\operatorname{det}^{e(\lambda_{\vec{v}})\left(r_{0}, \ldots, r_{f-1},p\right)})
$$
for $\vec{v}\in \{0,1\}^{\ZZ/f\ZZ}$ and $\lambda_{\vec{v}}\in \mathcal{R} \mathcal{D}\left(x_{0}, \ldots, x_{f-1},x\right)$. 
\end{lemma}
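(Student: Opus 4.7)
The plan is to deduce this lemma directly from the description of $W_{\operatorname{GL}_2}(\overline{\rho})$ given in \cite[\S 11]{Breuil-Paskunas}, simply translating their parameterisation by subsets $J \subseteq \{0,\ldots,f-1\}$ into our parameterisation by tuples $\vec{v} \in \{0,1\}^{\mathbb{Z}/f\mathbb{Z}}$, and then accounting for the extra twist by the character $\overline{\eta}$.

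First, I would recall the original statement: under the generic hypothesis $0 \leq r_i \leq p-3$ (with the excluded boundary cases), \cite[\S 11]{Breuil-Paskunas} shows that, for $\overline{\rho}$ the split reducible representation with $\overline{\eta}$ trivial, the Serre weights are exactly the irreducible $\mathbb{F}$-representations of $\operatorname{GL}_2(k)$ of the form $\lambda_J(r_0,\ldots,r_{f-1},p) \otimes \det^{e(\lambda_J)(r_0,\ldots,r_{f-1},p)}$, where $J$ runs over all subsets of $\{0,\ldots,f-1\}$ and $\lambda_J$ is determined by the rule $i \in J \iff \lambda_i(x_i) \in \{p-3-x_i,\, x_i+1\}$.

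Next, I would invoke Remark \ref{2.2.1}: the bijection $J \leftrightarrow \vec{v}$ given by $v_i = 1 \iff i+1 \in J$ identifies the indexing sets $\{J \subseteq \mathcal{S}\}$ and $\{0,1\}^{\mathbb{Z}/f\mathbb{Z}}$, and under this bijection the tuple $\lambda_J$ is exactly the tuple $\lambda_{\vec{v}}$ of our parameterisation; the cardinality $|J|$ matches $\ell(\lambda_{\vec{v}})$. Plugging this bijection into the formula for $e(\lambda)$ from (\ref{5}) recovers the exponent of the determinant character as stated.

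Finally, the twist by a general smooth character $\overline{\eta}\colon k^{\times} \to \mathbb{F}^{\times}$ is handled formally: twisting $\overline{\rho}$ by $\overline{\eta}\circ\operatorname{res}\circ\operatorname{Art}_K^{-1}$ simply tensors every Serre weight by $\overline{\eta}\circ\det$, by compatibility of the local Langlands correspondence with twists (equivalently, by the straightforward observation that $\operatorname{Hom}_{\operatorname{GL}_2(\mathcal{O}_K)}(\sigma,\pi) \neq 0$ iff $\operatorname{Hom}_{\operatorname{GL}_2(\mathcal{O}_K)}(\sigma\otimes\chi,\pi\otimes\chi) \neq 0$ for any character $\chi$). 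The main potential obstacle is a bookkeeping check that our $e(\lambda)$ in (\ref{5}), defined via the shifted indexing $\vec{v}$, produces the same determinantal twist as in \cite[\S 11]{Breuil-Paskunas}; this amounts to matching the two cases in (\ref{5}) with the condition $0 \in J$ versus $0 \notin J$ used in \cite{Breuil-Paskunas}, which under $v_i = 1 \iff i+1 \in J$ translates to the condition $v_{f-1} = 1$ versus $v_{f-1} = 0$ appearing in our formula. Once this purely combinatorial verification is carried out, the lemma follows.
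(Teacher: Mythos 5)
Your approach matches the paper's own: the paper's proof is a bare citation to \cite[Prop.~1.1, Prop.~1.3, \S 3]{MR2392355}, and the translation between the $J$-indexing of \cite[\S 11]{Breuil-Paskunas} and the $\vec{v}$-indexing used here is precisely what Remark~\ref{2.2.1} records, together with the reformulation of $e(\lambda)$ in \eqref{5}. So your plan spells out in detail what the paper leaves implicit, and it is essentially the same argument.

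One caveat on your final paragraph. You propose deriving the general-$\overline{\eta}$ case by tensoring the Serre weights of a fixed normalized $\overline{\rho}$ with $\overline{\eta}\circ\det$. But the stated formula carries the factor $\overline{\eta}\circ\det^{e(\lambda_{\vec{v}})}$, and the exponent $e(\lambda_{\vec{v}})$ depends on $\vec{v}$, so twisting all weights by the single character $\overline{\eta}\circ\det$ does not by itself produce the displayed determinantal factor: replacing the baseline $\overline{\kappa}_0$ by $\overline{\eta}$ inside the $e(\lambda_{\vec{v}})$-th power and multiplying by $\overline{\eta}\circ\det$ differ by $\overline{\eta}^{\,e(\lambda_{\vec{v}})-1}\circ\det$, which is nontrivial in general. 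The $\overline{\eta}$-dependence should therefore be read off directly from Diamond's statement (which already covers the general determinant twist) rather than deduced by the twisting observation alone. This is harmless for the paper's later use, where $\overline{\eta}=\overline{\kappa}_0$ is fixed from \S\ref{s3} onward, but it is worth making the bookkeeping explicit if you want the lemma in this generality.
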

\begin{proof}
    See \cite[Prop. 1.1 and Prop. 1.3]{MR2392355} and \cite[\S 3]{MR2392355}
\end{proof}
We now consider the case where $\overline{\rho}$ is reducible nonsplit. 
Let $\leq$ be the partial order on $\{0,1\}^{\ZZ/f\ZZ}$ defined by $\vec{v}\leq\vec{v}'$ if $v_i=1$, then $v_i'=1$ for every $0\leq i\leq f-1$. 
Then $\lambda_{{\vec{v}}}\leq \lambda_{{\vec{v}}'}$ in the sense of \cite[\S 11]{Breuil-Paskunas} if and only if $\vec{v}\leq\vec{v}'$. 
\begin{definition}\label{2.6}
    Let $\overline{\rho}: \operatorname{Gal}\left(\overline{\QQ}_p / K\right) \rightarrow \mathrm{GL}_{2}\left(\FF\right)$ be a continuous representation such that its restriction to inertia is:
$$
\left(\begin{array}{cc}
\omega_{f}^{r_{0}+1+p\left(r_{1}+1\right)+\cdots+p^{f-1}\left(r_{f-1}+1\right)} & \ast \\
0 & 1
\end{array}\right) \otimes (\overline{\eta}\circ\op{res}\circ \op{Art}_K^{-1})
$$
with $\ast\neq 0$ and $0 \leq r_{i} \leq p-3$ for all $i$ and not all $r_{i}$ equal to 0 or equal to $p-3$. As explained in \cite[\S 4]{MR3274546}, there exists an object $M$
of the Fontaine-Laffaille category such that $\overline{\rho} \simeq \operatorname{Hom}_{\op{Fil}^{\cdot}, \varphi .}\left(M, A_{\op {cris}} \otimes_{\mathbb{Z}_p} \mathbb{F}_p\right)$. 
Explicitly, $M$ is a free $k\otimes_{\FF_p}\FF$-module of rank $2$ which can be written as $M=M^0 \times \cdots \times M^{f-1}$ where $M^j=\FF e^j \oplus \FF f^j$, $s_j:=r_{f-j}$, 
$\operatorname{Fil}^0 M^j=M^j$, $\operatorname{Fil}^1 M^j=\operatorname{Fil}^{s_j+1} M^j=\FF f^j$, $\mathrm{Fil}^{s_j+2} M^j=0$ and 
$$
\left\{\begin{array}{ccc}
\varphi\left(e^j\right) & = & \alpha_{j+1} e^{j+1} \\
\varphi_{s_j+1}\left(f^j\right) & = & \beta_{j+1}\left(f^{j+1}+\mu_{j+1} e^{j+1}\right)
\end{array}\right.
$$
for $j \in\{0, \cdots, f-1\}, \alpha_j, \beta_j \in \FF^{\times}, \mu_j \in \FF$. Note that the nullity of $\mu_j$ is independent of the choice of $\left(e^j, f^j\right)_j$. 
We define an $f$-tuple $\vec{v}(\overline{\rho})\in\{0,1\}^{\ZZ/f\ZZ}$ associated to $\overline{\rho}$ by setting $\vec{v}(\overline{\rho})_j=1$ if and only if $\mu_{f-j}=0$. 
We also associate a non-empty subset $\mathcal{D}(\overline{\rho})\subseteq\mathcal{R} \mathcal{D}\left(x_{0}, \ldots, x_{f-1},x\right)$ to $\overline{\rho}$: 
\begin{equation}
    \mathcal{D}(\overline{\rho}):=\{\lambda_{\vec{v}}\in\mathcal{R} \mathcal{D}\left(x_{0}, \ldots, x_{f-1},x\right)\mid\vec{v}\in\{0,1\}^{\ZZ/f\ZZ},\vec{v}\leq \vec{v}(\overline{\rho})\}.
\end{equation}

\end{definition}

\begin{lemma}\label{18}
    Let $\overline{\rho}$ be generic reducible nonsplit as in Definition \ref{2.6}. Then $W_{\op{GL}_2}(\overline{\rho})$, the set of $\op{GL}_2$-Serre weights for $\overline{\rho}$, consists of: 
$$
\sigma_{\vec{v}}(\overline{\rho}):=\lambda_{\vec{v}}(r_0,...,r_{f-1},p)\otimes (\overline{\eta}\circ\operatorname{det}^{e(\lambda_{\vec{v}})\left(r_{0}, \ldots, r_{f-1},p\right)})
$$
for $\vec{v}\leq\vec{v}(\overline{\rho})$ in $\{0,1\}^{\ZZ/f\ZZ}$ and $\lambda_{\vec{v}}\in \mathcal{R} \mathcal{D}\left(x_{0}, \ldots, x_{f-1},x\right)$.  
\end{lemma}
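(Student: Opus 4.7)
The proof is essentially a translation of \cite[\S 11]{Breuil-Paskunas} (equivalently, the Fontaine-Laffaille reformulation in \cite[\S 4]{MR3274546}) into the $\vec{v}$-parametrization adopted here. The plan is: (i) quote the classification of $W_{\op{GL}_2}(\overline{\rho})$ in its original formulation as indexed by the down-set $\mathcal{D}(\overline{\rho}) \subseteq \mathcal{R}\mathcal{D}(x_0,\ldots,x_{f-1},x)$; (ii) invoke Remark \ref{2.2.1} to identify the partial order on $\mathcal{R}\mathcal{D}$ with the componentwise order on $\{0,1\}^{\ZZ/f\ZZ}$; and (iii) match the maximal element $\lambda(\overline{\rho})$ with $\lambda_{\vec{v}(\overline{\rho})}$, using Definition \ref{2.6}.

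For step (i), the cited results yield
$$
W_{\op{GL}_2}(\overline{\rho}) = \{\lambda(r_0,\ldots,r_{f-1},p) \otimes \overline{\eta} \circ \op{det}^{e(\lambda)(r_0,\ldots,r_{f-1},p)} : \lambda \in \mathcal{D}(\overline{\rho})\},
$$
with $\mathcal{D}(\overline{\rho})$ the set of $\lambda \leq \lambda(\overline{\rho})$ in $\mathcal{R}\mathcal{D}$, the top element $\lambda(\overline{\rho})$ being determined by the Fontaine-Laffaille module $M$: an embedding index belongs to the subset of $\{0,\ldots,f-1\}$ labelling $\lambda(\overline{\rho})$ precisely when the corresponding extension parameter $\mu$ vanishes. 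Definition \ref{2.6} repackages this exactly via the identification $\vec{v}(\overline{\rho})_j = 1 \Leftrightarrow \mu_{f-j}=0$.

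For step (ii), Remark \ref{2.2.1} already observed that $\lambda_{\vec{v}} \leq \lambda_{\vec{v}'}$ in the sense of \cite{Breuil-Paskunas} if and only if $\vec{v} \leq \vec{v}'$ componentwise. Combining with the identification of $\lambda(\overline{\rho})$ with $\lambda_{\vec{v}(\overline{\rho})}$, one obtains $\mathcal{D}(\overline{\rho}) = \{\lambda_{\vec{v}} : \vec{v} \leq \vec{v}(\overline{\rho})\}$, and hence $W_{\op{GL}_2}(\overline{\rho}) = \{\sigma_{\vec{v}}(\overline{\rho}) : \vec{v} \leq \vec{v}(\overline{\rho})\}$ in the notation of Lemma \ref{7}.

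The only (minor) obstacle is the bookkeeping in step (iii): one must verify that the index-shift $v_i = 1 \Leftrightarrow i+1 \in J$ of Remark \ref{2.2.1}, together with the reindexing $s_j = r_{f-j}$ used in the Fontaine-Laffaille module of Definition \ref{2.6}, really picks out the same top element as the one appearing in \cite{Breuil-Paskunas}. This is a direct unwinding of definitions together with the transition rules (ii) and (iii) defining $\mathcal{R}\mathcal{D}$; no new input beyond the structure theorems of \cite{Breuil-Paskunas} and \cite{MR3274546} is required.
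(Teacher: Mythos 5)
Your proposal is correct and matches the paper's approach: the paper's proof is simply the citation ``See [Prop.\ A.3]{MR3274546},'' and your steps (i)--(iii) are exactly the unwinding of that citation through the translations already set up in Remark \ref{2.2.1} and Definition \ref{2.6}. The bookkeeping you flag in step (iii) is genuinely the only thing to check, and it is handled by the conventions the paper baked into Definition \ref{2.6} (the reindexing $s_j = r_{f-j}$ and the rule $\vec{v}(\overline{\rho})_j = 1 \Leftrightarrow \mu_{f-j}=0$).
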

\begin{proof}
    See \cite[Prop. A.3]{MR3274546}.
\end{proof}
\subsubsection{The irreducible case}
Let $\left(x_{0}, \ldots, x_{f-1},x\right)$ be $f+1$ variables. We define a set $\mathcal{ID}\left(x_{0}, \ldots, x_{f-1},x\right)$ of $f$-tuples $\lambda\left(x_{0}, \ldots, x_{f-1},x\right):=\left(\lambda_{0}\left(x_{0},x\right), \ldots, \lambda_{f-1}\left(x_{f-1},x\right)\right)$ where $\lambda_{i}\left(x_{i},x\right) \in \mathbb{Z}[x] \pm x_{i}$ as below. Note that our $\mathcal{I} \mathcal{D}\left(x_{0}, \ldots, x_{f-1},p\right)$ is the same as $\mathcal{I} \mathcal{D}\left(x_{0}, \ldots, x_{f-1}\right)$ defined in \cite[\S 11]{Breuil-Paskunas}. \par
If $f=1$, $\lambda_{0}\left(x_{0},x\right) \in\left\{x_{0}, x-1-x_{0}\right\}$. If $f>1$, then:
\begin{enumerate}[label=(\roman*)]
\item $\lambda_{0}\left(x_{0},x\right) \in \left\{x_{0}, x_{0}-1, x-2-x_{0}, x-1-x_{0}\right\}$ and $\lambda_{i}\left(x_{i},x\right) \in$\\$\left\{x_{i}, x_{i}+1, x-2-x_{i}, x-3-x_{i}\right\}$ if $i>0$
\item if $i>0$ and $\lambda_{i}\left(x_{i},x\right) \in\left\{x_{i}, x_{i}+1\right\}$, or $i=0$ and $\lambda_{0}\left(x_{0},x\right)\in\left\{x_{0}, x_{0}-1\right\}$, then $\lambda_{i+1}\left(x_{i+1},x\right) \in\left\{x_{i+1}, x-2-x_{i+1}\right\}$
\item if $0<i<f-1$ and $\lambda_{i}\left(x_{i},x\right) \in\left\{x-2-x_{i}, x-3-x_{i}\right\}$, or $i=0$ and $\lambda_{0}\left(x_{0},x\right) \in\left\{x-2-x_{0}, x-1-x_{0}\right\}$, then $\lambda_{i+1}\left(x_{i+1},x\right) \in$ $\left\{x_{i+1}+1, x-3-x_{i+1}\right\}$
\item if $\lambda_{f-1}\left(x_{f-1},x\right)\in\left\{x-2-x_{f-1}, x-3-x_{f-1}\right\}$, then $\lambda_{0}\left(x_{0},x\right) \in\{x-1-x_{0}, x_{0}-1\}$
\end{enumerate}
with the conventions that $x_f=x_0$ and $\lambda_{f}\left(x_{f},x\right)=\lambda_{0}\left(x_{0},x\right)$.\par
We use a different parameterisation of the set $\mathcal{I} \mathcal{D}\left(x_{0}, \ldots, x_{f-1},x\right)$ compared to \cite[\S 11]{Breuil-Paskunas}, see Remark \ref{2.3.1} below. 
An $f$-tuple $\lambda$ of the set $\mathcal{ID}\left(x_{0}, \ldots, x_{f-1},x\right)$ can be naturally identified with an 
$f$-tuple ${\vec{v}}:=(v_0,...,v_{f-1})\in\{0,1\}^{\ZZ/f\ZZ}$, where $v_i=0$ if $i>0$ and $\lambda_{i}\left(x_{i},x\right) \in\left\{x_i, x_{i}+1\right\}$ (resp. $\lambda_{0}\left(x_{0},x\right) \in\left\{x_0, x_{0}-1\right\}$), and $v_i=1$ 
if $i>0$ and $\lambda_{i}\left(x_{i},x\right) \in\left\{x-2-x_{i}, x-3-x_{i}\right\}$ (resp. $\lambda_{0}\left(x_{0},x\right) \in\left\{x-2-x_0, x-1-x_{0}\right\}$). Let $\lambda_{\vec{v}}$ denote the $f$-tuple $\lambda$ parameterised by ${\vec{v}}$.\par
For $\lambda \in \mathcal{I} \mathcal{D}\left(x_{0}, \ldots, x_{f-1},x\right)$, we define : 
\begin{equation}\label{15}
    e(\lambda):=\left\{
    \begin{array}{cl} 
        \frac{1}{2}\left(\sum_{i=0}^{f-1} x^{i}\left(x_{i}-\lambda_{i}\left(x_{i},x\right)\right)\right) &\substack{\text{if }f>1\text{ and }\lambda_{f-1}\left(x_{f-1},x\right) \in\left\{x_{f-1}, x_{f-1}+1\right\}\\\text{ (resp. $\lambda_{0}(x_{0},x)=x_{0}$)}}\\
        \frac{1}{2}\left(x^{f}-1+\sum_{i=0}^{f-1} x^{i}\left(x_{i}-\lambda_{i}\left(x_{i},x\right)\right)\right) & \text{otherwise}
    \end{array}\right.
\end{equation}
\begin{remark}\label{2.3.1}
    As explained in \cite[\S 11]{Breuil-Paskunas}, the set $\mathcal{I} \mathcal{D}\left(x_{0}, \ldots, x_{f-1}\right)$ can be naturally identified with the set of subsets $J$ of $\{0,\ldots,f-1\}$ as follows: 
    for $i>0$, set $i\in J$ if and only if $\lambda_i(x_i)\in\{p-3-x_i, x_i+1\}$; set $0\in J$ if and only if $\lambda_0(x_0)\in\{p-1-x_0, x_0-1\}$. Then $v_i=1$ if and only if $i+1\in J$. 
\end{remark}
\begin{lemma}\label{16}
    Let $\overline{\rho}: \operatorname{Gal}\left(\overline{\mathbb{Q}}_{p} / K\right) \rightarrow \mathrm{GL}_{2}\left(\FF\right)$ be a continuous representation such that its restriction to inertia is: 
$$
\left(\begin{array}{cc}
\omega_{2f}^{r_{0}+1+p\left(r_{1}+1\right)+\cdots+p^{f-1}\left(r_{f-1}+1\right)} & 0 \\
0 & \omega_{2f}^{p^f(r_{0}+1)+p^{f+1}\left(r_{1}+1\right)+\cdots+p^{2f-1}\left(r_{f-1}+1\right)}
\end{array}\right) \otimes (\overline{\eta}\circ\op{res}\circ \op{Art}_K^{-1})
$$
with $0\leq r_0\leq p-1$ and $-1\leq r_i\leq p-2$ for $i>0$. Assume $1\leq r_0\leq p-2$ and $0\leq r_i\leq p-3$ for $i>0$. 
Then $W_{\op{GL}_2}(\overline{\rho})$, the set of $\op{GL}_2$-Serre weights associated to $\overline{\rho}$, consists of: 
$$
\sigma_{\vec{v}}(\overline{\rho}):=\lambda_{\vec{v}}(r_0,...,r_{f-1},p)\otimes (\overline{\eta}\circ\operatorname{det}^{e(\lambda_{\vec{v}})\left(r_{0}, \ldots, r_{f-1},p\right)})
$$
for $\vec{v}\in \{0,1\}^{\ZZ/f\ZZ}$ and $\lambda_{\vec{v}}\in \mathcal{I} \mathcal{D}\left(x_{0}, \ldots, x_{f-1},x\right)$. 
\end{lemma}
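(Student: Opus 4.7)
The plan is to reduce Lemma \ref{16} to the explicit description of Serre weights for a generic irreducible Galois representation in \cite[\S 11]{Breuil-Paskunas} (ultimately based on \cite[Prop.~1.1, Prop.~1.3]{MR2392355} and \cite[\S 3]{MR2392355}), by verifying that our reparameterisation of $\mathcal{ID}(x_0,\ldots,x_{f-1},x)$ and our function $e(\lambda)$ produce exactly the same set of weights as the original formulation. In other words, the content of Lemma \ref{16} is not a new computation but a translation, entirely analogous to how Lemma \ref{7} follows from the same references in the reducible split case.

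First, I would recall the original statement from \cite[\S 11]{Breuil-Paskunas}: under the genericity assumption $1\leq r_0\leq p-2$ and $0\leq r_i\leq p-3$ for $i>0$, the set $W_{\op{GL}_2}(\overline{\rho})$ is indexed by subsets $J\subseteq\{0,\ldots,f-1\}$, with each $J$ giving rise to a tuple $\lambda^{J}\in\mathcal{ID}(x_0,\ldots,x_{f-1},p)$ and a twisting exponent $e(\lambda^{J})$, so that the corresponding Serre weight is $\lambda^{J}(r_0,\ldots,r_{f-1},p)\otimes(\overline{\eta}\circ\det^{e(\lambda^{J})})$.

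Next, I would invoke Remark \ref{2.3.1} to set up the bijection between subsets $J$ and tuples $\vec{v}\in\{0,1\}^{\ZZ/f\ZZ}$ given by $v_i=1$ if and only if $i+1\in J$ (indices mod $f$). I would then check that under this bijection, the defining conditions (i)--(iv) on $\lambda\in\mathcal{ID}$ in \cite[\S 11]{Breuil-Paskunas} translate into the conditions (i)--(iv) we stated for our $\mathcal{ID}(x_0,\ldots,x_{f-1},x)$, and that $\lambda_{\vec{v}}$ in our parameterisation coincides with $\lambda^{J}$ for the matching $J$. The only slightly delicate case is $i=0$: the exceptional menu $\{x_0, x_0-1, x-2-x_0, x-1-x_0\}$ for $\lambda_0$ and the ``wrap-around'' condition (iv) must be shown to match, which is exactly where the index shift $v_i\leftrightarrow i+1\in J$ plays its role.

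Finally, I would verify that the twisting exponent $e(\lambda_{\vec{v}})(r_0,\ldots,r_{f-1},p)$ in equation \eqref{15} matches the one in \cite[\S 11]{Breuil-Paskunas}. The two cases in \eqref{15} correspond precisely to whether $f-1\in J$ (equivalently, whether the last component of $\lambda$ lies in the ``plus $x_i$'' menu), and the correction term $\frac{1}{2}(x^f-1)$ accounts for the difference between the two possibilities for the character on the diagonal, which is the standard Fontaine-Laffaille/crystalline computation underlying Breuil-Paskunas's formula. The main (and only) obstacle is the careful combinatorial bookkeeping for the $i=0$ component and the wrap-around, where the irreducible case differs from the reducible one; everything else is a direct translation from the cited references.
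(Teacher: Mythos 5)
Your proposal is correct and follows essentially the same route as the paper, which simply cites \cite[Prop.~1.1, Prop.~1.3 and \S 3]{MR2392355}; the only additional content you supply is the (routine but worth spelling out) verification that the reparameterisation by $\vec{v}\in\{0,1\}^{\ZZ/f\ZZ}$ via $v_i=1\Leftrightarrow i+1\in J$ and the definition of $e(\lambda)$ in \eqref{15} reproduce the original formulation, which is exactly what Remark \ref{2.3.1} records.
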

\begin{proof}
    See \cite[Prop. 1.1 and Prop. 1.3]{MR2392355} and \cite[\S 3]{MR2392355}
\end{proof}

\section{Explicit description of the set $W_{D}(\overline{\rho})$} \label{s3}

\subsection{Preparation  for the calculation of $W_{D}(\overline{\rho})$ in semisimple case}\label{s31}
We assume $\overline\rho$ is {\em semisimple} and {\em generic} in this section. Recall that we have descriptions of $\op{JH}(\overline{\Theta}(\psi)^{{\rm ss}})$ and $W_{\mathrm{GL}_2}(\overline{\rho})$ in Lemmas \ref{6}, \ref{7}, \ref{18} and \ref{16}. 
We assume $\overline{\eta}=\overline{\kappa}_0$ in $\overline{\rho}|_{I_K}$. Our goal is to determine all the $\overline{\psi}$'s such that there exist $\vec{u}\in\mathcal{P}_{\Theta([\overline\psi])}$, $\vec{v}\in\{0,1\}^{\ZZ/f\ZZ}$ satisfying $\overline{\Theta}([\overline\psi])_{\vec{u}}=\sigma_{\vec{v}}(\overline{\rho})$. \par

Let $q=p^f.$ Let $\overline{\psi}=\overline{\iota}^{(q+1)b+1+c}$ with $0\leq b\leq q-2,$ $0\leq c\leq q-1.$ By \eqref{4.0} and Lemmas \ref{7}, \ref{16}, we have: 
$$\overline{\sigma}_{\vec{t}_{\vec{u}}, \vec{s}_{\vec{u}}} \otimes\bar{\kappa}_0 \circ \mathrm{det}^{b+u_0u_{f-1}+(1-u_0)(1-u_{f-1})}=\lambda_{\vec{v}}(r_0,\ldots,r_{f-1},p)\otimes \overline{\kappa}_0\circ\operatorname{det}^{e(\lambda_{\vec{v}})\left(r_{0}, \ldots, r_{f-1},p\right)}. $$
By Lemma \ref{2.1} we get the following equations: 
\begin{equation}\label{9}
    s_{\vec{u}, i}=\lambda_{\vec{v},i}\left(r_{i},p\right), 
\end{equation}
\begin{equation}\label{10}
    b+u_0u_{f-1}+(1-u_0)(1-u_{f-1})+\sum_{i=0}^{f-1}p^it_{\vec{u}, i}\equiv e(\lambda_{\vec{v}})(r_0,\ldots,r_{f-1},p)(\text{mod }q-1).
\end{equation}
\begin{lemma}\label{P}
    Let $\vec{u},\vec{v}\in\{0,1\}^{\ZZ/f\ZZ}$ satisfy $\overline{\Theta}([\overline\psi])_{\vec{u}}=\sigma_{\vec{v}}(\overline{\rho})$. Then $\overline\psi$ is uniquely determined by $\vec{u}, \vec{v}$, and 
    $\vec{u}\in\mathcal{P}_{\Theta([\overline\psi])}$. Write $\overline{\psi}_{\vec{u},{\vec{v}}}$ for such $\overline{\psi}$. 
\end{lemma}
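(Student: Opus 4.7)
The plan is to invert the equation $\overline{\Theta}([\overline{\psi}])_{\vec{u}} = \sigma_{\vec{v}}(\overline{\rho})$ and read off the parameters $b, c$ of $\overline{\psi} = \overline{\iota}^{(q+1)b + 1 + c}$. By Lemma \ref{2.1}, combined with (\ref{4.0}) and Lemmas \ref{7}, \ref{16}, this equation is equivalent to the system \eqref{9}--\eqref{10}. Equation \eqref{9} determines the base-$p$ digits $c_i$ of $c$, and once $c$ is fixed, equation \eqref{10} pins down $b$ modulo $q - 1$.

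First I would solve \eqref{9} digit-by-digit. Substituting \eqref{2.0} and noting that $u_{0,i-1}$ is already determined by $\vec{u}$, each equation yields the explicit formula
\[
c_i = \begin{cases} p - 2 + u_{0, i-1} - \lambda_{\vec{v}, i}(r_i, p) & \text{if } u_i = 1, \\ \lambda_{\vec{v}, i}(r_i, p) + u_{0, i-1} & \text{if } u_i = 0. \end{cases}
\]
Under the generic hypothesis (either $0 \leq r_i \leq p-3$ in the reducible case of Lemma \ref{7}, or the analogous condition of Lemma \ref{16}), one checks that $\lambda_{\vec{v}, i}(r_i, p) \in [0, p-2]$ for every allowed choice of $\lambda_{\vec{v}, i}$, so each $c_i$ lies in $[0, p-1]$ and thus defines a legitimate base-$p$ digit.

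Next I would verify $\vec{u} \in \mathcal{P}_{\Theta([\overline{\psi}])}$ by checking the two defining conditions directly from the above formula. When $u_i = 1$ and $u_{0, i-1} = 0$ one gets $c_i = p - 2 - \lambda_{\vec{v}, i}(r_i, p) \leq p - 2$, so $c_i \neq p-1$; when $u_i = 0$ and $u_{0, i-1} = 1$ one gets $c_i = \lambda_{\vec{v}, i}(r_i, p) + 1 \geq 1$, so $c_i \neq 0$. With $c$ determined, the $t_{\vec{u}, i}$'s are computed from $\vec{u}$ and $c$ via \eqref{3.0}, and \eqref{10} then determines $b$ uniquely modulo $q - 1$; since $0 \leq b \leq q - 2$, this pins down $b$ and hence $\overline{\psi}$ completely.

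The main obstacle I expect is the bookkeeping around the boundary index $i = 0$, where the convention $u_{0, f-1} = 1 - u_{f-1}$ interacts with the extra twisting exponent $u_0 u_{f-1} + (1 - u_0)(1 - u_{f-1})$ in \eqref{4.0} and with the two-branch definition of $e(\lambda_{\vec{v}})$ in \eqref{5} or \eqref{15}; the $p^{f-1}$- and constant-term contributions on the two sides of \eqref{10} must be matched carefully. A secondary technicality is that the admissible shapes of $\lambda_0$ differ between the reducible and irreducible cases (e.g.\ $x_0 - 1$ versus $x_0 + 1$), so the extraction of $c_0$ requires a short separate check, but in every case $\lambda_{\vec{v}, i}(r_i, p) \in [0, p-2]$, so the argument is uniform.
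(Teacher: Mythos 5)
Your proof is correct and takes essentially the same approach as the paper: solve \eqref{9} digit-by-digit via \eqref{2.0} to obtain $c_i$, then read off $b$ modulo $q-1$ from \eqref{10}, and observe that both conditions defining $\mathcal{P}_{\Theta([\overline\psi])}$ reduce to $\lambda_{\vec{v},i}(r_i,p)\neq -1$, which the generic hypothesis guarantees. Your version is slightly more detailed (you explicitly verify $\lambda_{\vec{v},i}(r_i,p)\in[0,p-2]$ and hence $c_i\in[0,p-1]$), whereas the paper records only the minimal condition $\lambda_{\vec{v},i}(r_i,p)\neq -1$; the worries you flag about matching boundary terms in \eqref{10} are not actually an issue here, since any congruence modulo $q-1$ has a unique representative $b\in[0,q-2]$.
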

\begin{proof}
    If $\overline{\psi}=\overline{\iota}^{(q+1)b+1+c}$ satisfies $\overline{\Theta}([\overline\psi])_{\vec{u}}=\sigma_{\vec{v}}(\overline{\rho})$, then $(c_i)_i$ is determined by (\ref{2.0}) and (\ref{9}), and $b$ is determined by (\ref{10}). 
    By (\ref{2.0}), (\ref{9}) and the definition of $\mathcal{P}_{\Theta(\psi)}$, it suffices to verify $\lambda_{\vec{v},i}(r_i,p)\neq -1$ which follows from the generic condition. 
\end{proof}
\begin{definition}
For $z=z(x_0,\ldots,x_{f-1},x)\in \oplus_{i=0}^{f-1}\ZZ[x]x_i\oplus \ZZ[x]\subset \ZZ[x_0,\ldots,x_{f-1},x]$, we can write $z$ uniquely as $z=L(z)+C(z)$ with 
$$L(z):=\Sigma_{i=0}^{f-1}a_i x_i, \  C(z):=\Sigma_i a'_ix^i. $$
Let $C_i(z):=a'_i$. 
Define $S(z)=S(z)(x_0,\ldots,x_{f-1},x)$ to be the unique element in $\ZZ[x_0,\ldots,x_{f-1},x]$ such that $\op{deg}_{x}(S(z))<f$ and $z\equiv S(z)\ (\op{mod}x^f-1)$. 
\end{definition}
\begin{lemma}
    We have 
    $\frac{1}{2}S(\Sigma_{i=0}^{f-1}(x^{i}(x_i- \lambda_{\vec{v},i})))\in\oplus_{i=0}^{f-1}\ZZ[x] x_i\oplus\ZZ[x]$ and
    $$e(\lambda_{\vec{v}})(r_0,...,r_{f-1},p)=\frac{1}{2}S(\Sigma_{i=0}^{f-1}(x^{i}(x_i- \lambda_{\vec{v},i})))(r_0,...,r_{f-1},p).$$   
\end{lemma}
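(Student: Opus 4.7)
The plan is to establish, as an identity in $\ZZ[x_{0},\ldots,x_{f-1},x]$, the polynomial identity
\[
2\, e(\lambda_{\vec v})\;=\;S\!\Bigl(\textstyle\sum_{i=0}^{f-1} x^{i}(x_{i}-\lambda_{\vec v,i})\Bigr).
\]
The evaluation claim then follows by specialisation at $(r_0,\ldots,r_{f-1},p)$, and the containment claim reduces to showing that the right-hand side has all even coefficients (as an element of $\oplus_{i=0}^{f-1}\ZZ[x]x_{i}\oplus\ZZ[x]$, which it already manifestly lies in before dividing by $2$).

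First I would analyse the $x$-degree of $T:=\sum_{i=0}^{f-1} x^{i}(x_{i}-\lambda_{\vec v,i})$. In both the $\mathcal{RD}$ and $\mathcal{ID}$ settings, each $x_{i}-\lambda_{\vec v,i}$ is one of $0$, $\pm 1$, or $2x_{i}-x+c$ for some $c\in\ZZ$, so $x^{i}(x_{i}-\lambda_{\vec v,i})$ has $x$-degree at most $i+1\le f$, and the coefficient of $x^{f}$ in $T$ equals $-1$ if $v_{f-1}=1$ and $0$ otherwise. Consequently $S(T)=T$ when $v_{f-1}=0$, and $S(T)=T+(x^{f}-1)$ when $v_{f-1}=1$. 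Comparing with the two branches of the definitions (\ref{5}) and (\ref{15}) of $e(\lambda_{\vec v})$, both governed by exactly this dichotomy, the polynomial identity $2e(\lambda_{\vec v})=S(T)$ follows.

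For the evenness, I would split $T$ into its $x_i$-linear part and its $x_i$-free part. The coefficient of $x_{i}$ in $T$ comes only from the $j=i$ summand and equals $0$ or $2x^{i}$; these are preserved by $S$ since $i<f$, so the $x_i$-coefficients of $S(T)$ are even. For the $x_i$-free part, each summand reduces modulo $2$ to $\varepsilon_{i}x^{i}+v_{i}x^{i+1}$, where $\varepsilon_{i}\in\{0,1\}$ is the parity of the constant term of $x_{i}-\lambda_{\vec v,i}$. The crucial structural observation is that $\varepsilon_{i}=v_{i-1}$ for all $i\in\ZZ/f\ZZ$: each transition rule in the definition of $\mathcal{RD}$ and $\mathcal{ID}$ confines $\lambda_{i}$ to a two-element set whose members share the same parity of their constant term, and that common parity turns out to equal $v_{i-1}$. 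Granting this, in $\FF_{2}[x]/(x^{f}-1)$ we obtain
\[
\sum_{i=0}^{f-1}\bigl(v_{i-1}x^{i}+v_{i}x^{i+1}\bigr)\;\equiv\;2\sum_{i=0}^{f-1}v_{i}x^{i+1}\;\equiv\;0,
\]
so $S(T)$ has all even coefficients and $\tfrac{1}{2}S(T)\in\oplus_{i=0}^{f-1}\ZZ[x]x_{i}\oplus\ZZ[x]$.

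The main obstacle is the verification of $\varepsilon_{i}=v_{i-1}$. This is transparent in the reducible case because of the symmetric shape of rules (ii) and (iii). In the irreducible case one must additionally check the boundary behaviour at $i=0$: rule (iv) forces $\lambda_{0}\in\{x-1-x_{0},x_{0}-1\}$ exactly when $v_{f-1}=1$, and both of these have odd constant term, giving $\varepsilon_{0}=1=v_{f-1}$; the complementary case $v_{f-1}=0$ is handled symmetrically by rule (ii). Once this short case analysis is carried out the lemma is complete.
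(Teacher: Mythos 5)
Your argument is correct and follows essentially the same route as the paper. The two key points you identify are precisely the paper's: the $x_i$-coefficients of $T$ are already even (since $L(x_i-\lambda_{\vec v,i})\in\{0,2x_i\}$), and the evenness of the $x_i$-free part hinges on the cross-index parity identity, which you state as $\varepsilon_i=v_{i-1}$ and the paper states equivalently as ``$C_1(x_i-\lambda_{\vec v,i})=-1$ if and only if $C_0(x_{i+1}-\lambda_{\vec v,i+1})$ is odd.'' The identification of $S(T)$ with $2e(\lambda_{\vec v})$ via the coefficient of $x^f$ and the two branches of the definition is likewise the same observation. Your write-up is somewhat more explicit (notably the $\FF_2[x]/(x^f-1)$ telescoping and the boundary check of rule~(iv) for $\mathcal{ID}$), but there is no substantive difference in approach.
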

\begin{proof}
    Since $L(x_i-\lambda_{\vec{v},i})\in\{0,2x_i\}$, we have $\frac{1}{2}L(S(\Sigma_{i=0}^{f-1}(x^{i}(x_i- \lambda_{\vec{v},i}))))\in\oplus_{i=0}^{f-1}\ZZ[x] x_i$. 
    Since $\op{deg}_x C(x_i-\lambda_{\vec{v},i})\leq 1$ and $C_1(x_i-\lambda_{\vec{v},i})\in\{0,-1\}$, we see that $\frac{1}{2}C(S(\Sigma_{i=0}^{f-1}(x^{i}(x_i- \lambda_{\vec{v},i}))))\in\ZZ[x]$ follows from the fact 
    $C_1(x_i-\lambda_{\vec{v},i})=-1$ if and only if $C_0(x_{i+1}-\lambda_{\vec{v},i+1})$ is odd. The first claim follows. 
    Since $\op{deg}_x(\Sigma_{i=0}^{f-1}(x^{i} (x_i- \lambda_{\vec{v},i})))\leq f$, 
    we obtain $S(\Sigma_{i=0}^{f-1}(x^{i} (x_i- \lambda_{\vec{v},i})))$ by substituting $x^f$ by $1$ in $x^{f-1} \lambda_{\vec{v},f-1}$ if $\op{deg}_x( \lambda_{\vec{v},f-1})=1$. This coincides with 
    the definition of $e(\lambda_{\vec{v}})$ and we get the second claim.
\end{proof}
\begin{definition}
    Motivated by (\ref{2.0}), (\ref{3.0}) and (\ref{9}), we define: 
    \begin{equation}\label{5.0}
         c_{\vec{u},\vec{v},i}(x_i,x):=\left\{\begin{array}{cl}x-2+u_{0,i-1}-\lambda_{\vec{v},i}(x_i,x)&\text{ if }u_i=1\\
        u_{0,i-1}+\lambda_{\vec{v},i}(x_i,x)&\text{ if }u_i=0
        \end{array}\right., 
    \end{equation}
    \begin{equation}\label{6.0}
         t_{\vec{u},\vec{v},i}(x_i,x):=\left\{\begin{array}{cl}c_{\vec{u},\vec{v},i}(x_i,x)+(1-u_{i-1})&\text{ if }u_i=1\\
        0&\text{ if }u_i=0
        \end{array}\right.,   
    \end{equation}
    \begin{equation}\label{b0}
        b_{\vec{u},\vec{v}}(x_0,...,x_{f-1},x):=\frac{1}{2}S(\Sigma_{i=0}^{f-1}x^{i} (x_i- \lambda_{\vec{v},i}))-S(u_0u_{f-1}+(1-u_0)(1-u_{f-1})+\Sigma_{i=0}^{f-1}x^i t_{\vec{u},\vec{v},i}). 
   \end{equation}
    We write $c_{\vec{u},\vec{v},i}$, $t_{\vec{u},\vec{v},i}$, $b_{\vec{u},\vec{v}}$ for $c_{\vec{u},\vec{v},i}(x_i,x)$, $t_{\vec{u},\vec{v},i}(x_i,x)$, $b_{\vec{u},\vec{v}}(x_0,...,x_{f-1},x)$ in the following. 
\end{definition}
\begin{lemma}   
    We have 
    \begin{equation}
        \begin{split}
            b_{\vec{u},\vec{v}}(x_0,...,x_{f-1},x)&=\frac{1}{2}(\sum_{0\leq i<f-1}x^{i} (x_i- \lambda_{\vec{v},i})+S(x^{f-1}(x_{f-1}-\lambda_{\vec{v},f-1})))\\
            &-(u_0u_{f-1}+(1-u_0)(1-u_{f-1})+\sum_{0\leq i<f-1}x^i t_{\vec{u},\vec{v},i}+S(x^{f-1}t_{\vec{u},\vec{v},f-1})),
        \end{split}
    \end{equation}
and
    \begin{equation}\label{3.6}
            1+\Sigma_{i=0}^{f-1}p^i c_i+(q+1)b\equiv (1+\Sigma_{i=0}^{f-1}x^i c_{\vec{u},\vec{v},i}+(1+x^f) b_{\vec{u},\vec{v}})(r_0,...,r_{f-1},p)~ (\op{mod }q^{2}-1).        
    \end{equation}
    where $c_{0},...,c_{f-1}$ and $b$ satisfy (\ref{2.0}), (\ref{9}) and (\ref{10}). 
\end{lemma}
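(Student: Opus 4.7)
The plan is to unwind the definitions in both parts. For the first identity, I would apply $\ZZ$-linearity of $S$ to each sum appearing in the definition \eqref{b0} of $b_{\vec{u},\vec{v}}$. By inspection of the formulas defining $\lambda_{\vec{v},i}$, each $\lambda_{\vec{v},i}(x_i,x)$ lies in $\ZZ[x]\pm x_i$ with $x$-degree at most $1$; the same bound then holds for $c_{\vec{u},\vec{v},i}$ and $t_{\vec{u},\vec{v},i}$ defined in \eqref{5.0}--\eqref{6.0}. Hence for each $i<f-1$ the monomials $x^i(x_i-\lambda_{\vec{v},i})$ and $x^i\,t_{\vec{u},\vec{v},i}$ have $x$-degree at most $i+1<f$ and are fixed by $S$; the integer $u_0u_{f-1}+(1-u_0)(1-u_{f-1})$ is also fixed by $S$. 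Only the $i=f-1$ summands genuinely require reduction modulo $x^f-1$, and separating these out of the two sums inside \eqref{b0} yields the displayed expansion of $b_{\vec{u},\vec{v}}$.

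For the congruence \eqref{3.6}, I would first promote the polynomial statements to honest integer identities. A case analysis on $u_i\in\{0,1\}$, combining \eqref{2.0} and \eqref{3.0} with \eqref{5.0} and \eqref{6.0} via the equation \eqref{9}, produces the equalities $c_i=c_{\vec{u},\vec{v},i}(r_i,p)$ and $t_{\vec{u},i}=t_{\vec{u},\vec{v},i}(r_i,p)$ as integers. Consequently $\sum_{i=0}^{f-1}p^i c_i$ equals $\bigl(\sum_{i}x^i c_{\vec{u},\vec{v},i}\bigr)(r_0,\ldots,r_{f-1},p)$ on the nose, which accounts for the non-$b$ part of \eqref{3.6}. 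Next I would combine \eqref{10} with the preceding lemma (which identifies $e(\lambda_{\vec{v}})(r_0,\ldots,r_{f-1},p)$ with $\tfrac12 S\bigl(\sum_i x^i(x_i-\lambda_{\vec{v},i})\bigr)$ evaluated at $x=p$); substituting the integer identity for $t_{\vec{u},i}$ and comparing with \eqref{b0} gives $b\equiv b_{\vec{u},\vec{v}}(r_0,\ldots,r_{f-1},p)\pmod{q-1}$. Multiplying this congruence by $q+1$ promotes it to $(q+1)b\equiv(q+1)b_{\vec{u},\vec{v}}(r_0,\ldots,r_{f-1},p)\pmod{(q+1)(q-1)=q^2-1}$, and adding $1+\sum_i p^i c_i$ to both sides gives \eqref{3.6}, since $1+x^f$ evaluates to $1+p^f=q+1$.

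Neither part is delicate; there is no single hard step. The only care required is the degree bookkeeping in the first identity and the upgrade from a mod $q-1$ to a mod $q^2-1$ congruence in the second, which works precisely because $b$ enters \eqref{3.6} only through the factor $(1+x^f)$ evaluating to $q+1$.
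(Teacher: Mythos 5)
Your proposal is correct and follows essentially the same route as the paper's (much terser) proof: for the first identity you use the $x$-degree bounds on $x_i-\lambda_{\vec{v},i}$ and $t_{\vec{u},\vec{v},i}$ together with linearity of $S$ to isolate the $i=f-1$ terms, and for the second you observe that since $b$ enters only through the factor $q+1$, the congruence mod $q^2-1$ reduces to pinning down $b$ mod $q-1$. Your write-up simply makes explicit the intermediate integer identities $c_i=c_{\vec{u},\vec{v},i}(r_i,p)$ and $t_{\vec{u},i}=t_{\vec{u},\vec{v},i}(r_i,p)$ that the paper leaves implicit.
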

\begin{proof}
    The first claim follows from $\op{deg}_x (x_i-\lambda_{\vec{v},i})\leq 1$ and $\op{deg}_x (t_{\vec{u},\vec{v},i})\leq 1$. 
    Since we compute $1+c+(q+1)b$ $\op{mod} q^2-1$, it suffices to determine $b$ $\op{mod} q-1$ and the second claim follows. 
\end{proof}

Next we calculate $L(1+\Sigma_{i=0}^{f-1}x^i c_{\vec{u},\vec{v},i}+(1+x^f) b_{\vec{u},\vec{v}})$ and $C(1+\Sigma_{i=0}^{f-1}x^i c_{\vec{u},\vec{v},i}+(1+x^f) b_{\vec{u},\vec{v}})=\Sigma_{i=0}^{2f-1}d_ix^i$ in Proposition \ref{19} and Proposition \ref{11}. \par
\begin{lemma}\label{14}
    $L( c_{\vec{u},\vec{v},i})$, $L( t_{\vec{u},\vec{v},i})$, $L( \lambda_{\vec{v},i})$ are determined by $u_i$ and $v_i$ as follows.

\begin{table}[H]
    \begin{tabular}{|ll|c|c|}
    \hline
    \multicolumn{2}{|l|}{\multirow{2}{*}{$(L( c_{\vec{u},\vec{v},i}),L( t_{\vec{u},\vec{v},i}),L( \lambda_{\vec{v},i}))$}} & \multirow{2}{*}{$u_i=0$} & \multirow{2}{*}{$u_i=1$} \\
    \multicolumn{2}{|l|}{}                    &                          &                           \\ \hline
    \multicolumn{2}{|c|}{$v_i=0$}             & \multicolumn{1}{c|}{$(x_i,0,x_i)$}   & \multicolumn{1}{c|}{$(-x_i,-x_i,x_i)$}    \\ \hline
    \multicolumn{2}{|c|}{$v_i=1$}             & \multicolumn{1}{c|}{$(-x_i,0,-x_i)$}   & \multicolumn{1}{c|}{$(x_i,x_i,-x_i)$}    \\ \hline
    \end{tabular}
\end{table}
\end{lemma}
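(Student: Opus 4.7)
The plan is to prove the lemma by a direct case analysis on the four possibilities for $(u_i, v_i) \in \{0,1\}^2$, reading off the linear parts straight from the definitions (\ref{5.0}), (\ref{6.0}) and the explicit form of $\lambda_{\vec{v},i}$ given in the definitions of $\mathcal{RD}$ and $\mathcal{ID}$. The operator $L$ is $\ZZ[x]$-linear and kills every term that does not involve an $x_i$, so the calculation reduces to reading off a single coefficient in each case.

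First I would compute $L(\lambda_{\vec{v},i})$. Inspecting the list of allowed $\lambda_{\vec{v},i}(x_i,x)$: if $v_i = 0$ then $\lambda_{\vec{v},i}\in\{x_i, x_i+1\}$ (or $\{x_0, x_0-1\}$ when $i=0$ in the irreducible case), so $L(\lambda_{\vec{v},i}) = x_i$; if $v_i = 1$ then $\lambda_{\vec{v},i}\in\{x-2-x_i, x-3-x_i\}$ (or $\{x-2-x_0, x-1-x_0\}$ when $i=0$ in the irreducible case), so $L(\lambda_{\vec{v},i}) = -x_i$. This gives the third column of the table.

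Next I would apply $L$ to (\ref{5.0}). Since $u_{0,i-1}$ and $x - 2$ contribute nothing to the linear-in-$x_j$ part, we get $L(c_{\vec{u},\vec{v},i}) = -L(\lambda_{\vec{v},i})$ if $u_i = 1$ and $L(c_{\vec{u},\vec{v},i}) = L(\lambda_{\vec{v},i})$ if $u_i = 0$. Substituting the value of $L(\lambda_{\vec{v},i})$ computed above fills in the first column. Applying $L$ to (\ref{6.0}) is even simpler: when $u_i = 1$, the summand $(1-u_{i-1})$ is a pure integer and drops out, giving $L(t_{\vec{u},\vec{v},i}) = L(c_{\vec{u},\vec{v},i})$; when $u_i = 0$, $t_{\vec{u},\vec{v},i} = 0$ identically, so $L(t_{\vec{u},\vec{v},i}) = 0$. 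This produces the second column.

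There is no genuine obstacle here: the lemma is a routine bookkeeping statement that simply isolates the $x_i$-linear parts of the defining formulae. The only thing worth flagging is to remember that the case $i=0$ in the irreducible setting uses a slightly different list of admissible $\lambda_0$'s, but since those alternatives also have linear part $\pm x_0$, the outcome of the table is unchanged.
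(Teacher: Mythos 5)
Your proof is correct and is exactly the argument the paper has in mind: the paper's proof simply says the lemma ``follows from (\ref{5.0}), (\ref{6.0}) and the definition of $\lambda_{\vec{v},i}$,'' and your case analysis spells this out accurately, including the observation that the exceptional form of $\lambda_{\vec{v},0}$ in the irreducible case still has linear part $\pm x_0$ and so does not change the table.
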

\begin{proof}
    This follows from (\ref{5.0}), (\ref{6.0}) and the definition of $\lambda_{\vec{v},i}$. 
\end{proof}
\begin{proposition}\label{19}
    $L(1+\Sigma_{i=0}^{f-1}x^i c_{\vec{u},\vec{v},i}+(1+x^f) b_{\vec{u},\vec{v}})$ are determined by $\vec{u}$ and $\vec{v}$ as follows: 
    $$L(1+\Sigma_{i=0}^{f-1}x^i c_{\vec{u},\vec{v},i}+(1+x^f) b_{\vec{u},\vec{v}})=\Sigma_{i=0}^{f-1} x^{i+w_if}x_i,$$ where $w_i\in\{0,1\}$ and $w_i\equiv u_i+v_i\ (\op{mod}2)$.
\end{proposition}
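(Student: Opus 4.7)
The plan is to apply $L$ termwise to the expression $1+\sum_{i=0}^{f-1}x^i c_{\vec{u},\vec{v},i}+(1+x^f)b_{\vec{u},\vec{v}}$, exploiting $\mathbb{Z}[x]$-linearity of $L$ and the identities collected in Lemma \ref{14}, and then performing a two-case analysis on the parity of $u_i+v_i$ at each index $i$. Since $L(1)=0$, the task reduces to computing $L(c_{\vec{u},\vec{v},i})$ (which reads off from Lemma \ref{14} as $(-1)^{u_i+v_i}x_i$) and $L(b_{\vec{u},\vec{v}})$.

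The step that really needs justification is the evaluation of $L(b_{\vec{u},\vec{v}})$. Here I would use the observation that $L$ and $S$ commute ($S$ is $\mathbb{Z}[x_0,\ldots,x_{f-1}]$-linear, while $L$ is $\mathbb{Z}[x]$-linear and splits off the part linear in the $x_j$), and that $S$ acts as the identity on polynomials of $x$-degree less than $f$. Lemma \ref{14} forces $L(x_i-\lambda_{\vec{v},i})\in\{0,2x_i\}$ and $L(t_{\vec{u},\vec{v},i})\in\{0,\pm x_i\}$, both of which have $x$-degree $0$; hence the $L$-parts of $\sum x^i(x_i-\lambda_{\vec{v},i})$ and $\sum x^i t_{\vec{u},\vec{v},i}$ have $x$-degree at most $f-1$, and $S$ leaves them untouched. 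This yields
\[
L(b_{\vec{u},\vec{v}})=\sum_{i=0}^{f-1}\bigl(v_i x_i - L(t_{\vec{u},\vec{v},i})\bigr)x^i,
\]
and a four-case enumeration of $(u_i,v_i)\in\{0,1\}^2$ shows that the coefficient of $x_i x^i$ in $L(b_{\vec{u},\vec{v}})$ equals $1$ precisely when $u_i\neq v_i$, and $0$ otherwise; write this coefficient as $\epsilon_i\in\{0,1\}$, so $\epsilon_i\equiv u_i+v_i\pmod 2$.

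Putting the three pieces together gives
\[
L\!\left(1+\sum_{i=0}^{f-1}x^i c_{\vec{u},\vec{v},i}+(1+x^f)b_{\vec{u},\vec{v}}\right)=\sum_{i=0}^{f-1}\bigl((-1)^{u_i+v_i}+\epsilon_i\bigr)x_i x^i+x^f\sum_{i=0}^{f-1}\epsilon_i x_i x^i.
\]
When $u_i+v_i$ is even, $\epsilon_i=0$ and $(-1)^{u_i+v_i}=1$, so the contribution at index $i$ is $x_i x^i$ (take $w_i=0$); when $u_i+v_i$ is odd, $\epsilon_i=1$ and $(-1)^{u_i+v_i}=-1$, so the first bracket vanishes and the contribution collapses to $x_i x^{i+f}$ (take $w_i=1$). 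In both cases the exponent of $x$ is $i+w_i f$ with $w_i\equiv u_i+v_i\pmod 2$, as required.

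The only real obstacle is the bookkeeping in the second paragraph, namely checking that $S$ acts trivially on the $L$-parts of the two sums comprising $b_{\vec{u},\vec{v}}$; once that is verified, the rest is a mechanical case check driven by Lemma \ref{14}.
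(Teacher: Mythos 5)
Your proof is correct and follows essentially the same route as the paper's: reduce to computing $L(c_{\vec u,\vec v,i})$ and $L(b_{\vec u,\vec v})$ termwise, use that $S$ acts trivially on the $L$-part because $L(x_i-\lambda_{\vec v,i})$ and $L(t_{\vec u,\vec v,i})$ are $x$-degree-zero by Lemma~\ref{14}, and then run a four-case check on $(u_i,v_i)$. Your reorganization via $\epsilon_i$ and $(-1)^{u_i+v_i}$ is a slightly cleaner bookkeeping of the same case-analysis that the paper writes out as four indexed sums.
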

\begin{proof}
    Since $\op{deg}_x L(x_{f-1}-\lambda_{\vec{v},f-1})\leq 1$ and $\op{deg}_x L(t_{\vec{u},\vec{v},f-1})\leq 1$, we have $$L(S(x^{f-1}(x_{f-1}-\lambda_{\vec{v},f-1})))=L(x^{f-1}(x_{f-1}-\lambda_{\vec{v},f-1})),$$ 
$$L(S(x^{f-1}t_{\vec{u},\vec{v},f-1}))=L(x^{f-1}t_{\vec{u},\vec{v},f-1}), \  \text{and}$$ 
$$L(1+\Sigma_{i=0}^{f-1}x^i c_{\vec{u},\vec{v},i}+(1+x^f) b_{\vec{u},\vec{v}})=\sum_{i=0}^{f-1} L(x^i c_{\vec{u},\vec{v},i}+(1+x^f) (-x^i t_{\vec{u},\vec{v},i}+\frac{1}{2} x^i (x_i- \lambda_{\vec{v},i}))).$$
    It follows from the table in Lemma \ref{14} that
    \begin{equation*}
        \begin{split}
            L(1+\Sigma_{i=0}^{f-1}x^i c_{\vec{u},\vec{v},i}+(1+x^f) b_{\vec{u},\vec{v}})&=\sum_{i=0}^{f-1} L(x^i c_{\vec{u},\vec{v},i}+(1+x^f) (-x^i t_{\vec{u},\vec{v},i}+\frac{1}{2} x^i (x_i- \lambda_{\vec{v},i})))\\
                         &=\sum_{\substack{0\leq i\leq f-1\\u_i=0,v_i=0}}x^ix_i+\sum_{\substack{0\leq i\leq f-1\\u_i=0,v_i=1}}x^{i+f}x_i\\ &+\sum_{\substack{0\leq i\leq f-1\\u_i=1,v_i=0}}x^{i+f}x_i
                         +\sum_{\substack{0\leq i\leq f-1\\u_i=1,v_i=1}}x^ix_i.
        \end{split}
    \end{equation*}
\end{proof}

Since $\op{deg}_x(C( c_{\vec{u},\vec{v},i}))\leq 1$, $\op{deg}_x(C( b_{\vec{u},\vec{v}}))<f$, let $d_i\in\ZZ$ for $0\leq i\leq 2f-1$ be given by: 
\begin{equation}\label{d0}
    C(1+\Sigma_{i=0}^{f-1}x^i c_{\vec{u},\vec{v},i}+(1+x^f) b_{\vec{u},\vec{v}})=\Sigma_{i=0}^{2f-1}d_ix^i. 
\end{equation}
\begin{proposition}\label{11}
    We have
    \begin{equation}\label{d}
    d_i+d_{i+f}=0
    \end{equation}
    for $0\leq i\leq f-1$, and $d_i$ is determined by $u_{i-1},u_{i},v_{i-1},v_{i}$ as follows.\par
    For $i>0$, we have
\begin{table}[H]
    \centering
    \caption{\label{a}}
    \begin{tabular}{|cc|cccc|}
    \hline
    \multicolumn{2}{|c|}{\multirow{2}{*}{$d_i(i>0)$}}              & \multicolumn{4}{c|}{$(u_{i-1},u_i)$}                                                         \\ \cline{3-6} 
    \multicolumn{2}{|c|}{}                                         & \multicolumn{1}{c|}{$(0,0)$} & \multicolumn{1}{c|}{$(0,1)$} & \multicolumn{1}{c|}{$(1,0)$} & $(1,1)$ \\ \hline
    \multicolumn{1}{|c|}{\multirow{4}{*}{$(v_{i-1},v_i)$}} & $(0,0)$ & \multicolumn{1}{c|}{$0$}     & \multicolumn{1}{c|}{$-1$}    & \multicolumn{1}{c|}{$1$}     & $0$     \\ \cline{2-6} 
    \multicolumn{1}{|c|}{}                                 & $(0,1)$ & \multicolumn{1}{c|}{$-1$}    & \multicolumn{1}{c|}{$0$}     & \multicolumn{1}{c|}{$0$}     & $1$     \\ \cline{2-6} 
    \multicolumn{1}{|c|}{}                                 & $(1,0)$ & \multicolumn{1}{c|}{$1$}     & \multicolumn{1}{c|}{$-1$}    & \multicolumn{1}{c|}{$1$}     & $-1$    \\ \cline{2-6} 
    \multicolumn{1}{|c|}{}                                 & $(1,1)$ & \multicolumn{1}{c|}{$-1$}    & \multicolumn{1}{c|}{$1$}     & \multicolumn{1}{c|}{$-1$}    & $1 $    \\ \hline
    \end{tabular}
    \end{table}
For $i=0$ and $\overline{\rho}$ is reducible split, we have
\begin{table}[H]
    \centering
    \caption{\label{b}}
    \begin{tabular}{|cc|cccc|}
    \hline
    \multicolumn{2}{|c|}{\multirow{2}{*}{$d_0$}}              & \multicolumn{4}{c|}{$(u_{f-1}, u_0)$}                                                         \\ \cline{3-6} 
    \multicolumn{2}{|c|}{}                                         & \multicolumn{1}{c|}{$(0,0)$} & \multicolumn{1}{c|}{$(0,1)$} & \multicolumn{1}{c|}{$(1,0)$} & $(1,1)$ \\ \hline
    \multicolumn{1}{|c|}{\multirow{4}{*}{$(v_{f-1}, v_0)$}} & $(0,0)$ & \multicolumn{1}{c|}{$1$}     & \multicolumn{1}{c|}{$0$}    & \multicolumn{1}{c|}{$0$}     & $-1$     \\ \cline{2-6} 
    \multicolumn{1}{|c|}{}                                 & $(0,1)$ & \multicolumn{1}{c|}{$0$}    & \multicolumn{1}{c|}{$1$}     & \multicolumn{1}{c|}{$-1$}     & $0$   \\ \cline{2-6} 
    \multicolumn{1}{|c|}{}                                 & $(1,0)$ & \multicolumn{1}{c|}{$1$}     & \multicolumn{1}{c|}{$-1$}    & \multicolumn{1}{c|}{$1$}     & $-1$    \\ \cline{2-6} 
    \multicolumn{1}{|c|}{}                                 & $(1,1)$ & \multicolumn{1}{c|}{$-1$}    & \multicolumn{1}{c|}{$1$}     & \multicolumn{1}{c|}{$-1$}    & $1$     \\ \hline
    \end{tabular}
\end{table}
For $i=0$ and $\overline{\rho}$ is irreducible, we have
\begin{table}[H]
    \centering
    \caption{\label{c}}
    \begin{tabular}{|cc|cccc|}
    \hline
    \multicolumn{2}{|c|}{\multirow{2}{*}{$d_0$}}              & \multicolumn{4}{c|}{$(u_{f-1}, u_0)$}                                                         \\ \cline{3-6} 
    \multicolumn{2}{|c|}{}                                         & \multicolumn{1}{c|}{$(0,0)$} & \multicolumn{1}{c|}{$(0,1)$} & \multicolumn{1}{c|}{$(1,0)$} & $(1,1)$ \\ \hline
    \multicolumn{1}{|c|}{\multirow{4}{*}{$(v_{f-1}, v_0)$}} & $(0,0)$ & \multicolumn{1}{c|}{$1$}     & \multicolumn{1}{c|}{$0$}    & \multicolumn{1}{c|}{$0$}     & $-1$     \\ \cline{2-6} 
    \multicolumn{1}{|c|}{}                                 & $(0,1)$ & \multicolumn{1}{c|}{$0$}    & \multicolumn{1}{c|}{$1$}     & \multicolumn{1}{c|}{$-1$}     & $0$   \\ \cline{2-6} 
    \multicolumn{1}{|c|}{}                                 & $(1,0)$ & \multicolumn{1}{c|}{$0$}     & \multicolumn{1}{c|}{$0$}    & \multicolumn{1}{c|}{$0$}     & $0$   \\ \cline{2-6} 
    \multicolumn{1}{|c|}{}                                 & $(1,1)$ & \multicolumn{1}{c|}{$0$}    & \multicolumn{1}{c|}{$0$}     & \multicolumn{1}{c|}{$0$}    & $0$    \\ \hline
    \end{tabular}
\end{table}
\end{proposition}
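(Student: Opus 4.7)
The plan is to decompose $C(P):=C\bigl(1+\sum_{i=0}^{f-1}x^{i}c_{\vec{u},\vec{v},i}+(1+x^{f})b_{\vec{u},\vec{v}}\bigr)$ coefficient by coefficient. Write $C(c_{\vec{u},\vec{v},i})=\gamma_{i}+\gamma'_{i}x$ (well defined since $\op{deg}_x C(c_{\vec{u},\vec{v},i})\leq 1$) and $\beta_i:=C_i(b_{\vec{u},\vec{v}})$; since $C(b_{\vec{u},\vec{v}})$ has degree $<f$ in $x$ by the $S$-reduction, $(1+x^f)b_{\vec{u},\vec{v}}$ contributes $\beta_i$ to both $x^i$ and $x^{i+f}$ for $0\leq i\leq f-1$. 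Collecting terms yields
\begin{align*}
d_0 &= 1+\gamma_0+\beta_0, & d_i &= \gamma_i+\gamma'_{i-1}+\beta_i\ (1\leq i\leq f-1),\\
d_f &= \gamma'_{f-1}+\beta_0, & d_{i+f} &= \beta_i\ (1\leq i\leq f-1).
\end{align*}
Thus $d_i+d_{i+f}=0$ is equivalent to $2\beta_i=-(\gamma_i+\gamma'_{i-1})$ for $1\leq i\leq f-1$ and $2\beta_0=-(1+\gamma_0+\gamma'_{f-1})$.

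These identities follow directly from the definition (\ref{b0}) of $b_{\vec{u},\vec{v}}$: applying $C$ and using $C\circ S=S\circ C$, each $\beta_i$ is expressible in terms of $C(\lambda_{\vec{v},\cdot})$ and $\tau_j:=C(t_{\vec{u},\vec{v},j})$, where $\tau_j=0$ if $u_j=0$ and $\tau_j=C(c_{\vec{u},\vec{v},j})+(1-u_{j-1})$ if $u_j=1$. For each $i$ the required identity reduces to a four-case check on $(u_{i-1},u_i)$. The constant $A:=u_0u_{f-1}+(1-u_0)(1-u_{f-1})$ sitting inside the second $S$ in (\ref{b0}) turns out to supply exactly what is needed at $i=0$ to compensate both the extra constant $1$ coming from $P$ and the flipped value $u_{0,-1}=u_{0,f-1}=1-u_{f-1}$.

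Once $d_i+d_{i+f}=0$ is established, we obtain the compact formulas $d_i=\tfrac{1}{2}(\gamma_i+\gamma'_{i-1})$ for $1\leq i\leq f-1$ and $d_0=\tfrac{1}{2}(1+\gamma_0-\gamma'_{f-1})$. To produce Table \ref{a}, we read off $\gamma_i$ and $\gamma'_j$ from (\ref{5.0}) using the explicit shape of $\lambda_{\vec{v},i}$ forced by rules (ii)--(iii) in the definition of $\mathcal{R}\mathcal{D}$, which makes $C(\lambda_{\vec{v},i})$ a function of $(v_{i-1},v_i)$ alone. A pleasant simplification is that $\gamma'_j\in\{0,1\}$ depends only on $(u_j,v_j)$: explicitly $\gamma'_j=1$ if and only if $u_j+v_j$ is odd. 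Substituting through the sixteen possibilities for $(u_{i-1},u_i,v_{i-1},v_i)$ with $i>0$ then produces Table \ref{a} entry by entry.

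For $i=0$ the analysis splits according to whether $\overline{\rho}$ is reducible split or irreducible, since the allowed values of $\lambda_{\vec{v},0}$ (and hence of $C(\lambda_{\vec{v},0})$ as a function of $(v_{f-1},v_0)$) are governed by rules (ii)--(iii) of $\mathcal{R}\mathcal{D}$ in the first case and by rules (ii), (iii), (iv) of $\mathcal{I}\mathcal{D}$ in the second. Combining with the adjustment $u_{0,-1}=1-u_{f-1}$ in $\gamma_0$ and the sign change $-\gamma'_{f-1}$ (versus $+\gamma'_{i-1}$) in the formula for $d_0$, the same substitution yields Tables \ref{b} and \ref{c} respectively. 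The main obstacle is purely bookkeeping: the four different conventions for $\lambda_{\vec{v},0}$ in the irreducible case, together with the several off-by-one adjustments at $i=0$, must be applied consistently throughout the case analysis.
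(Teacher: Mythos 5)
Your proposal is correct and takes essentially the same approach as the paper: separate the $L$- and $C$-parts, extract the coefficients $d_i$ and $d_{i+f}$ from the explicit formula for $b_{\vec{u},\vec{v}}$, prove $d_i+d_{i+f}=0$, and then read off the tables by substituting the explicit shapes of $\lambda_{\vec{v},i}$, $c_{\vec{u},\vec{v},i}$, $t_{\vec{u},\vec{v},i}$. Your packaging is slightly tidier than the paper's (isolating $\gamma_i,\gamma'_i,\beta_i$ gives the compact identities $d_i=\tfrac12(\gamma_i+\gamma'_{i-1})$ and $d_0=\tfrac12(1+\gamma_0-\gamma'_{f-1})$, and the observation $\gamma'_j\equiv u_j+v_j\ (\mathrm{mod}\ 2)$ cuts down the bookkeeping), but the logical content and the sixteen-case verification are the same as the paper carries out through its auxiliary $C_i$-tables.
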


\begin{proof}
 Note first $\op{deg}_x(C( c_{\vec{u},\vec{v},i}))\leq 1$, $\op{deg}_x (C(\lambda_{\vec{v},i}))\leq 1$, $\op{deg}_x (C(t_{\vec{u},\vec{v},i}))\leq 1$, and $\op{deg}_x(C( b_{\vec{u},\vec{v}}))<f$. By (\ref{b0}) and (\ref{d0}), if $0<i\leq f-1$, then we have
    \begin{equation}\label{d1}
        d_i=C_i(x^{i-1} c_{\vec{u},\vec{v},i-1}+x^{i} c_{\vec{u},\vec{v},i}-x^{i-1} t_{\vec{u},\vec{v},i-1}-x^{i} t_{\vec{u},\vec{v},i}-\frac{1}{2}(x^{i-1} \lambda_{\vec{v},i-1}+x^{i} \lambda_{\vec{v},i})),
    \end{equation}
    $$d_{i+f}=C_i(-x^{i-1} t_{\vec{u},\vec{v},i-1}-x^{i} t_{\vec{u},\vec{v},i}-\frac{1}{2}(x^{i-1} \lambda_{\vec{v},i-1}+x^{i} \lambda_{\vec{v},i})),$$
    and
    \begin{equation}\label{d2}
        \begin{split}
            d_i+d_{i+f}=&C_i(x^{i-1} c_{\vec{u},\vec{v},i-1}+x^{i} c_{\vec{u},\vec{v},i}-2(x^{i-1} t_{\vec{u},\vec{v},i-1}+x^{i} t_{\vec{u},\vec{v},i})-(x^{i-1} \lambda_{\vec{v},i-1}+x^{i} \lambda_{\vec{v},i}))                       
        \end{split}   
    \end{equation}
    The above sum is determined by $u_{i-1},u_{i},v_{i-1},v_{i}$ as follows.\par
    By definition of $\lambda_{\vec{v}}$, we have
    $$
    C_i(x^{i-1} \lambda_{\vec{v},i-1})=\left\{
        \begin{aligned}
            0 \text{, if }v_{i-1}=0\\
            1 \text{, if }v_{i-1}=1
        \end{aligned}
    \right., \quad
    C_i(x^{i} \lambda_{\vec{v},i})=\left\{
        \begin{aligned}
            0 \text{, if }(v_{i-1},v_i)=(0,0)\\
            -2 \text{, if }(v_{i-1},v_i)=(0,1)\\
            1 \text{, if }(v_{i-1},v_i)=(1,0)\\
            -3 \text{, if }(v_{i-1},v_i)=(1,1)\\
        \end{aligned}
    \right..
    $$
    By (\ref{5.0}), we have
        \begin{table}[H]
        \centering
        \begin{tabular}{|cl|c|c|}
        \hline
        \multicolumn{2}{|c|}{\multirow{2}{*}{$C_i(x^{i-1} c_{\vec{u},\vec{v},i-1})$}} & \multirow{2}{*}{$u_{i-1}=0$} & \multirow{2}{*}{$u_{i-1}=1$} \\
        \multicolumn{2}{|c|}{}                    &                              &                              \\ \hline
        \multicolumn{2}{|c|}{$v_{i-1}=0$}         & $0$                            & $1$                            \\ \hline
        \multicolumn{2}{|c|}{$v_{i-1}=1$}         & $1$                            & $0$                            \\ \hline
        \end{tabular}                          
    \end{table}
    \begin{table}[H]
        \centering
        \begin{tabular}{|cc|cccc|}
            \hline
            \multicolumn{2}{|c|}{\multirow{2}{*}{$C_i(x^{i} c_{\vec{u},\vec{v},i})$}}              & \multicolumn{4}{c|}{$(u_{i-1}, u_i)$}                                                         \\ \cline{3-6} 
            \multicolumn{2}{|c|}{}                                         & \multicolumn{1}{c|}{$(0,0)$} & \multicolumn{1}{c|}{$(0,1)$} & \multicolumn{1}{c|}{$(1,0)$} & $(1,1)$ \\ \hline
            \multicolumn{1}{|c|}{\multirow{4}{*}{$(v_{i-1}, v_i)$}} & $(0,0)$ & \multicolumn{1}{c|}{$0$}     & \multicolumn{1}{c|}{$-2$}    & \multicolumn{1}{c|}{$1$}     & $-1$     \\ \cline{2-6} 
            \multicolumn{1}{|c|}{}                                 & $(0,1)$ & \multicolumn{1}{c|}{$-2$}    & \multicolumn{1}{c|}{$0$}     & \multicolumn{1}{c|}{$-1$}     & $1$   \\ \cline{2-6} 
            \multicolumn{1}{|c|}{}                                 & $(1,0)$ & \multicolumn{1}{c|}{$1$}     & \multicolumn{1}{c|}{$-3$}    & \multicolumn{1}{c|}{$2$}     & $-2$    \\ \cline{2-6} 
            \multicolumn{1}{|c|}{}                                 & $(1,1)$ & \multicolumn{1}{c|}{$-3$}    & \multicolumn{1}{c|}{$1$}     & \multicolumn{1}{c|}{$-2$}    & $2 $    \\ \hline
        \end{tabular}
            
    \end{table}
      By (\ref{6.0}), we have
    \begin{table}[H]
        \centering

            \begin{tabular}{|cl|c|c|}
                \hline
                \multicolumn{2}{|c|}{\multirow{2}{*}{$C_i(x^{i-1} t_{\vec{u},\vec{v},i-1})$}} & \multirow{2}{*}{$u_{i-1}=0$} & \multirow{2}{*}{$u_{i-1}=1$} \\
                \multicolumn{2}{|c|}{}                    &                              &                              \\ \hline
                \multicolumn{2}{|c|}{$v_{i-1}=0$}         & $0$                            & $1$                            \\ \hline
                \multicolumn{2}{|c|}{$v_{i-1}=1$}         & $0$                            & $0$                            \\ \hline
                \end{tabular}

    \end{table}

    \begin{table}[H]
        \centering
        \begin{tabular}{|cc|cccc|}
            \hline
            \multicolumn{2}{|c|}{\multirow{2}{*}{$C_i(x^{i} t_{\vec{u},\vec{v},i})$}}              & \multicolumn{4}{c|}{$(u_{i-1}, u_i)$}                                                         \\ \cline{3-6} 
            \multicolumn{2}{|c|}{}                                         & \multicolumn{1}{c|}{$(0,0)$} & \multicolumn{1}{c|}{$(0,1)$} & \multicolumn{1}{c|}{$(1,0)$} & $(1,1)$ \\ \hline
            \multicolumn{1}{|c|}{\multirow{4}{*}{$(v_{i-1}, v_i)$}} & $(0,0)$ & \multicolumn{1}{c|}{$0$}     & \multicolumn{1}{c|}{$-1$}    & \multicolumn{1}{c|}{$0$}     & $-1$     \\ \cline{2-6} 
            \multicolumn{1}{|c|}{}                                 & $(0,1)$ & \multicolumn{1}{c|}{$0$}    & \multicolumn{1}{c|}{$1$}     & \multicolumn{1}{c|}{$0$}     & $1$   \\ \cline{2-6} 
            \multicolumn{1}{|c|}{}                                 & $(1,0)$ & \multicolumn{1}{c|}{$0$}     & \multicolumn{1}{c|}{$-2$}    & \multicolumn{1}{c|}{$0$}     & $-2$    \\ \cline{2-6} 
            \multicolumn{1}{|c|}{}                                 & $(1,1)$ & \multicolumn{1}{c|}{$0$}    & \multicolumn{1}{c|}{$2$}     & \multicolumn{1}{c|}{$0$}    & $2$     \\ \hline
\end{tabular}
            
    \end{table}
    Combining (\ref{d2}) and tables above, we have $d_i+d_{i+f}=0$ for $i>0$. Combining (\ref{d1}) and tables above, we have Table \ref{a}.\par
    If $i=0$, then we have
    \begin{equation}\label{d3}
        \begin{split}    
        d_0&=C_0\Big(1+ c_{\vec{u},\vec{v},0}- t_{\vec{u},\vec{v},0}-S\left(x^{f-1} t_{\vec{u},\vec{v},f-1}\right)-\left(u_0u_{f-1}+\left(1-u_0\right)\left(1-u_{f-1}\right)\right)\\ &-\frac{1}{2}\left(x^{0} \lambda_{\vec{v},0}+S\left(x^{f-1} \lambda_{\vec{v},f-1}\right)\right)\Big),  
        \end{split}
    \end{equation}
    
    \begin{equation*}
        \begin{split}
        d_f=&C_f\left(x^{f-1} c_{\vec{u},\vec{v},f-1}\right)+C_0\Big(-t_{\vec{u},\vec{v},0}-S\left(x^{f-1} t_{\vec{u},\vec{v},f-1}\right)-\left(u_0u_{f-1}+\left(1-u_0\right)\left(1-u_{f-1}\right)\right)\\
        &-\frac{1}{2}\left(x^{0} \lambda_{\vec{v},0}+S\left(x^{f-1} \lambda_{\vec{v},f-1}\right)\right)\Big),
        \end{split}
    \end{equation*}
    and
    \begin{equation}\label{d4}
        \begin{split}
    d_0+d_f=&C_0\left(1+ c_{\vec{u},\vec{v},0}-2\left( t_{\vec{u},\vec{v},0}-S\left(x^{f-1} t_{\vec{u},\vec{v},f-1}\right)\right)-2\left(u_0u_{f-1}+\left(1-u_0\right)\left(1-u_{f-1}\right)\right)\right.\\
    &\left.-\left(x^{0} \lambda_{\vec{v},0}+S\left(x^{f-1} \lambda_{\vec{v},f-1}\right)\right)\right)+C_f\left(x^{f-1} c_{\vec{u},\vec{v},f-1}\right).
    \end{split}
    \end{equation}
  The above sum is determined by $u_{f-1},u_{0},v_{f-1},v_{0}$ as follows. 
    $$u_0u_{f-1}+(1-u_0)(1-u_{f-1})=\left\{
        \begin{aligned}
            1 \text{, if }(u_{f-1},u_0)=(0,0)\\
            0 \text{, if }(u_{f-1},u_0)=(0,1)\\
            0 \text{, if }(u_{f-1},u_0)=(1,0)\\
            1 \text{, if }(u_{f-1},u_0)=(1,1)\\
        \end{aligned}
    \right.$$
    If $\overline{\rho}$ is reducible split, by definition of $\lambda_{\vec{v}}$, we have
    $$
    C_0(S(x^{f-1} \lambda_{\vec{v},f-1}))=\left\{
        \begin{aligned}
            0 \text{, if }v_{f-1}=0\\
            1 \text{, if }v_{f-1}=1
        \end{aligned}
    \right., \quad 
    C_0(x^{0} \lambda_{\vec{v},0})=\left\{
        \begin{aligned}
            0 \text{, if }(v_{f-1},v_0)=(0,0)\\
            -2 \text{, if }(v_{f-1},v_0)=(0,1)\\
            1 \text{, if }(v_{f-1},v_0)=(1,0)\\
            -3 \text{, if }(v_{f-1},v_0)=(1,1)\\
        \end{aligned}
    \right..
    $$
    By (\ref{5.0}), we have
    \begin{table}[H]
        \centering
        
            \begin{tabular}{|cl|c|c|}
                \hline
                \multicolumn{2}{|c|}{\multirow{2}{*}{$C_f(x^{f-1} c_{\vec{u},\vec{v},f-1})$}} & \multirow{2}{*}{$u_{f-1}=0$} & \multirow{2}{*}{$u_{f-1}=1$} \\
                \multicolumn{2}{|c|}{}                    &                              &                              \\ \hline
                \multicolumn{2}{|c|}{$v_{f-1}=0$}         & $0$                            & $1$                            \\ \hline
                \multicolumn{2}{|c|}{$v_{f-1}=1$}         & $1$                            &$ 0$                            \\ \hline
            \end{tabular}           
        
    \end{table}
    \begin{table}[H]
        \centering
        \begin{tabular}{|cc|cccc|}
            \hline
            \multicolumn{2}{|c|}{\multirow{2}{*}{$C_0( c_{\vec{u},\vec{v},0})$}}              & \multicolumn{4}{c|}{$(u_{f-1}, u_0)$}                                                         \\ \cline{3-6} 
            \multicolumn{2}{|c|}{}                                         & \multicolumn{1}{c|}{$(0,0)$} & \multicolumn{1}{c|}{$(0,1)$} & \multicolumn{1}{c|}{$(1,0)$} & $(1,1)$ \\ \hline
            \multicolumn{1}{|c|}{\multirow{4}{*}{$(v_{f-1}, v_0)$}} & $(0,0)$ & \multicolumn{1}{c|}{$1$}     & \multicolumn{1}{c|}{$-1$}    & \multicolumn{1}{c|}{$0$}     & $-2$     \\ \cline{2-6} 
            \multicolumn{1}{|c|}{}                                 & $(0,1)$ & \multicolumn{1}{c|}{$-1$}    & \multicolumn{1}{c|}{$1$}     & \multicolumn{1}{c|}{$-2$}     & $0$   \\ \cline{2-6} 
            \multicolumn{1}{|c|}{}                                 & $(1,0)$ & \multicolumn{1}{c|}{$2$}     & \multicolumn{1}{c|}{$-2$}    & \multicolumn{1}{c|}{$1$}     & $-3$   \\ \cline{2-6} 
            \multicolumn{1}{|c|}{}                                 & $(1,1)$ & \multicolumn{1}{c|}{$-2$}    & \multicolumn{1}{c|}{$2$}     & \multicolumn{1}{c|}{$-3$}    & $1$    \\ \hline
        \end{tabular}
              \end{table}
    By (\ref{6.0}), we have
    \begin{table}[H]
        \centering
        
            \begin{tabular}{|cl|c|c|}
                \hline
                \multicolumn{2}{|c|}{\multirow{2}{*}{$C_0(S(x^{f-1} t_{\vec{u},\vec{v},f-1}))$}} & \multirow{2}{*}{$u_{f-1}=0$} & \multirow{2}{*}{$u_{f-1}=1$} \\
                \multicolumn{2}{|c|}{}                    &                              &                              \\ \hline
                \multicolumn{2}{|c|}{$v_{f-1}=0$}         & $0$                            & $1$                            \\ \hline
                \multicolumn{2}{|c|}{$v_{f-1}=1$}         & $0$                            & $0$                            \\ \hline
            \end{tabular}

    \end{table}
    \begin{table}[H]
        \centering
        \begin{tabular}{|cc|cccc|}
            \hline
            \multicolumn{2}{|c|}{\multirow{2}{*}{$C_0( t_{\vec{u},\vec{v},0})$}}              & \multicolumn{4}{c|}{$(u_{f-1}, u_0)$}                                                         \\ \cline{3-6} 
            \multicolumn{2}{|c|}{}                                         & \multicolumn{1}{c|}{$(0,0)$} & \multicolumn{1}{c|}{$(0,1)$} & \multicolumn{1}{c|}{$(1,0)$} & $(1,1)$ \\ \hline
            \multicolumn{1}{|c|}{\multirow{4}{*}{$(v_{f-1}, v_0)$}} & $(0,0)$ & \multicolumn{1}{c|}{$0$}     & \multicolumn{1}{c|}{$0$}    & \multicolumn{1}{c|}{$0$}     & $-2$     \\ \cline{2-6} 
            \multicolumn{1}{|c|}{}                                 & $(0,1)$ & \multicolumn{1}{c|}{$0$}    & \multicolumn{1}{c|}{$2$}     & \multicolumn{1}{c|}{$0$}     & $0$   \\ \cline{2-6} 
            \multicolumn{1}{|c|}{}                                 & $(1,0)$ & \multicolumn{1}{c|}{$0$}     & \multicolumn{1}{c|}{$-1$}    & \multicolumn{1}{c|}{$0$}     & $-3$   \\ \cline{2-6} 
            \multicolumn{1}{|c|}{}                                 & $(1,1)$ & \multicolumn{1}{c|}{$0$}    & \multicolumn{1}{c|}{$3$}     & \multicolumn{1}{c|}{$0$}    & $1$    \\ \hline
            \end{tabular} 
            
    \end{table}
    Combining (\ref{d3}), (\ref{d4}) and tables above, we get Table \ref{b} and $d_0+d_f=0$ for reducible split $\overline{\rho}$. \par
    If $\overline{\rho}$ is irreducible, by definition of $\lambda_{\vec{v}}$, we have
    $$
    C_0(S(x^{f-1} \lambda_{\vec{v},f-1}))=\left\{
        \begin{aligned}
            0 \text{, if }v_{f-1}=0\\
            1 \text{, if }v_{f-1}=1
        \end{aligned}
    \right., \quad
    C_0(x^{0} \lambda_{\vec{v},0})=\left\{
        \begin{aligned}
            0 \text{, if }(v_{f-1},v_0)=(0,0)\\
            -2 \text{, if }(v_{f-1},v_0)=(0,1)\\
            -1 \text{, if }(v_{f-1},v_0)=(1,0)\\
            -1 \text{, if }(v_{f-1},v_0)=(1,1)\\
        \end{aligned}
    \right..
    $$
    
    By (\ref{5.0}), we have
    
    \begin{table}[H]
        \centering

            \begin{tabular}{|cl|c|c|}
                \hline
                \multicolumn{2}{|c|}{\multirow{2}{*}{$C_f(x^{f-1} c_{\vec{u},\vec{v},f-1})$}} & \multirow{2}{*}{$u_{f-1}=0$} & \multirow{2}{*}{$u_{f-1}=1$} \\
                \multicolumn{2}{|c|}{}                    &                              &                              \\ \hline
                \multicolumn{2}{|c|}{$v_{f-1}=0$}         & $0$                            & $1$                            \\ \hline
                \multicolumn{2}{|c|}{$v_{f-1}=1$}         & $1$                            & $0$                            \\ \hline
            \end{tabular}

    \end{table}
    \begin{table}[H]
        \centering
        \begin{tabular}{|cc|cccc|}
            \hline
            \multicolumn{2}{|c|}{\multirow{2}{*}{$C_0( c_{\vec{u},\vec{v},0})$}}              & \multicolumn{4}{c|}{$(u_{f-1}, u_0)$}                                                         \\ \cline{3-6} 
            \multicolumn{2}{|c|}{}                                         & \multicolumn{1}{c|}{$(0,0)$} & \multicolumn{1}{c|}{$(0,1)$} & \multicolumn{1}{c|}{$(1,0)$} & $(1,1)$ \\ \hline
            \multicolumn{1}{|c|}{\multirow{4}{*}{$(v_{f-1}, v_0)$}} & $(0,0)$ & \multicolumn{1}{c|}{$1$}     & \multicolumn{1}{c|}{$-1$}    & \multicolumn{1}{c|}{$0$}     & $-2$     \\ \cline{2-6} 
            \multicolumn{1}{|c|}{}                                 & $(0,1)$ & \multicolumn{1}{c|}{$-1$}    & \multicolumn{1}{c|}{$1$}     & \multicolumn{1}{c|}{$-2$}     & $0$   \\ \cline{2-6} 
            \multicolumn{1}{|c|}{}                                 & $(1,0)$ & \multicolumn{1}{c|}{$0$}     & \multicolumn{1}{c|}{$0$}    & \multicolumn{1}{c|}{$-1$}     & $-1$   \\ \cline{2-6} 
            \multicolumn{1}{|c|}{}                                 & $(1,1)$ & \multicolumn{1}{c|}{$0$}    & \multicolumn{1}{c|}{$0$}     & \multicolumn{1}{c|}{$-1$}    & $-1$    \\ \hline
            \end{tabular}
            
    \end{table}
    By (\ref{6.0}), we have
    \begin{table}[H]
        \centering
        
            \begin{tabular}{|cl|c|c|}
                \hline
                \multicolumn{2}{|c|}{\multirow{2}{*}{$C_0(S(x^{f-1} t_{\vec{u},\vec{v},f-1}))$}} & \multirow{2}{*}{$u_{f-1}=0$} & \multirow{2}{*}{$u_{f-1}=1$} \\
                \multicolumn{2}{|c|}{}                    &                              &                              \\ \hline
                \multicolumn{2}{|c|}{$v_{f-1}=0$}         & $0$                            & $1$                            \\ \hline
                \multicolumn{2}{|c|}{$v_{f-1}=1$}         & $0$                            & $0$                            \\ \hline
            \end{tabular}

    \end{table}
    \begin{table}[H]
        \centering
        \begin{tabular}{|cc|cccc|}
            \hline
            \multicolumn{2}{|c|}{\multirow{2}{*}{$C_0( t_{\vec{u},\vec{v},0})$}}              & \multicolumn{4}{c|}{$(u_{f-1}, u_0)$}                                                         \\ \cline{3-6} 
            \multicolumn{2}{|c|}{}                                         & \multicolumn{1}{c|}{$(0,0)$} & \multicolumn{1}{c|}{$(0,1)$} & \multicolumn{1}{c|}{$(1,0)$} & $(1,1)$ \\ \hline
            \multicolumn{1}{|c|}{\multirow{4}{*}{$(v_{f-1}, v_0)$}} & $(0,0)$ & \multicolumn{1}{c|}{$0$}     & \multicolumn{1}{c|}{$0$}    & \multicolumn{1}{c|}{$0$}     & $-2$     \\ \cline{2-6} 
            \multicolumn{1}{|c|}{}                                 & $(0,1)$ & \multicolumn{1}{c|}{$0$}    & \multicolumn{1}{c|}{$2$}     & \multicolumn{1}{c|}{$0$}     & $0$   \\ \cline{2-6} 
            \multicolumn{1}{|c|}{}                                 & $(1,0)$ & \multicolumn{1}{c|}{$0$}     & \multicolumn{1}{c|}{$1$}    & \multicolumn{1}{c|}{$0$}     & $-1$   \\ \cline{2-6} 
            \multicolumn{1}{|c|}{}                                 & $(1,1)$ & \multicolumn{1}{c|}{$0$}    & \multicolumn{1}{c|}{$1$}     & \multicolumn{1}{c|}{$0$}    & $-1$    \\ \hline
            \end{tabular}   
            
    \end{table}
    Combining (\ref{d3}), (\ref{d4}) and tables above, we get Table \ref{c} and $d_0+d_f=0$ for irreducible $\overline{\rho}$. 
    \end{proof}
\subsection{Explicit description of $W_{D}(\overline{\rho})$ for generic $\overline{\rho}$}\label{s32}
We now give explicit description of $W_{D}(\overline{\rho}).$ Let $\overline{\psi}=\overline{\iota}^{(q+1)b+1+c}\in W_{D}(\overline{\rho})$ with $0\leq b\leq q-2,$ $0\leq c\leq q-1$. By (\ref{3.6}) and \eqref{d}, we have
\begin{equation*}
    \begin{split}
        (q+1)b+1+c\equiv &(L(1+\Sigma_{i=0}^{f-1}x^i c_{\vec{u},\vec{v},i}+(1+x^f) b_{\vec{u},\vec{v}})(r_0,...,r_{f-1},p)\\
        &+C(1+\Sigma_{i=0}^{f-1}x^i c_{\vec{u},\vec{v},i}+(1+x^f) b_{\vec{u},\vec{v}})(r_0,...,r_{f-1},p))~(\op{mod}q^2-1)\\
        \equiv&\sum_{i=0}^{f-1} q^{w_i}p^ir_i+\sum_{i=0}^{2f-1} d_ip^i~ (\op{mod}q^2-1)\\
        \equiv&\sum_{i=0}^{f-1} q^{w_i}p^ir_i+\sum_{i=0}^{f-1} (d_i+qd_{i+f})p^i ~(\op{mod}q^2-1)\\
        \equiv&\sum_{i=0}^{f-1} q^{w_{i}} p^{i} r_{i}+(1-q)\sum_{i=0}^{f-1}d_ip^i~ (\op{mod}q^2-1).
    \end{split}
\end{equation*}

\begin{theorem}\label{26}
    Let $\overline{\rho}$ be generic reducible split as in Lemma \ref{7}, then $\overline{\psi}=\overline{\iota}^{(q+1)b+1+c}\in W_{D}(\overline{\rho})$, if and only if
    $$(q+1)b+1+c\equiv\sum_{i=0}^{f-1} q^{w_{i}} p^{i} r_{i}+(1-q)\sum_{i=0}^{f-1}d_ip^i ~(\op{mod}q^2-1)$$
    where $w_{i} \in\{0,1\}$ and $d_{i} \in\{-1,0,1\}$ satisfy the following relations.
    \begin{itemize}
        \item For $i>0$, 
        $w_i=\left\{\begin{array}{cl}
        1&  \text { if } d_i=-1\\
        0&  \text { if } d_i=1             
        \end{array}\right.$, 
    and if $d_i=0$, then $(w_{i-1},w_i)=(0,0)$ or $(1,1)$.
        \item 
        $w_0=\left\{\begin{array}{cl}
        1&  \text { if } d_0=-1\\
        0&  \text { if } d_0=1             
        \end{array}\right.$, 
    and if $d_0=0$, then $(w_{f-1},w_0)=(0,1)$ or $(1,0)$. 
    \end{itemize}
Note that, from the above relations, if $d_i=0$ for all $0\leq i\leq f-1$, $\vec{w}$ does not exist, and if $(d_0,\ldots,d_{f-1})\neq(0,\ldots,0)$, there is exactly one $\vec{w}$.  We write $\overline{\psi}_{\vec{w},\vec{d}}$ for such $\overline{\psi}$ and we have 
\begin{equation}\label{w}
    \overline{\psi}_{\vec{w},\vec{d}}=\overline{\psi}_{\vec{w}',\vec{d}'}\Longleftrightarrow \vec{w}=\vec{w}',\ \vec{d}=\vec{d}'.
\end{equation}
As a consequence, 
$$W_D(\overline\rho)=\{\overline{\psi}_{\vec{w},\vec{d}}\mid \vec{d}\in\{-1,0,1\}^{\ZZ/f\ZZ},\ \vec{d}\neq (0,...,0)\},$$
where $\vec{w}$ is uniquely determined by $\vec{d}$. We have $\# W_{D}(\overline{\rho})=3^{f}-1$.  
\end{theorem}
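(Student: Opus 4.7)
The displayed congruence
\[
(q+1)b + 1 + c \equiv \sum_{i=0}^{f-1} q^{w_i} p^i r_i + (1-q)\sum_{i=0}^{f-1} d_i p^i \pmod{q^2-1}
\]
derived immediately before the theorem statement already reduces the problem to parametrising the valid pairs $(\vec{w},\vec{d})$ arising from $\overline{\psi}\in W_D(\overline{\rho})$. For the forward direction, given $\overline{\psi}\in W_D(\overline{\rho})$, Lemma \ref{P} produces a pair $(\vec{u},\vec{v})$ with $\vec{u}\in\mathcal{P}_{\Theta([\overline{\psi}])}$ and $\overline{\Theta}([\overline{\psi}])_{\vec{u}}=\sigma_{\vec{v}}(\overline{\rho})$. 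Setting $w_i:=u_i+v_i\bmod 2$ (as in Proposition \ref{19}) and reading off $d_i$ from Proposition \ref{11}, the stated relations between $(w_{i-1},w_i)$ and $d_i$ are verified by inspecting every entry of Tables \ref{a} and \ref{b}: for each $d_i\in\{-1,1\}$ the parity $u_i+v_i\bmod 2$ takes the value prescribed by the theorem, and for $d_i=0$ the allowed entries satisfy either $w_i=w_{i-1}$ (for $i>0$) or $w_0\neq w_{f-1}$ (for $i=0$).

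Conversely, given a valid $(\vec{w},\vec{d})$ satisfying the relations, define $\overline{\psi}$ by the exponent formula. To show $\overline{\psi}\in W_D(\overline{\rho})$, by Proposition \ref{prop::serre wts} it suffices to exhibit $(\vec{u},\vec{v})$ realising $(\vec{w},\vec{d})$ with $\vec{u}\in\mathcal{P}_{\Theta([\overline{\psi}])}$. I would parametrise candidates by $\vec{v}\in\{0,1\}^{\ZZ/f\ZZ}$ with $\vec{u}:=\vec{w}+\vec{v}\bmod 2$ and, for each position, read off from Tables \ref{a}, \ref{b} the allowed values of $(v_{i-1},v_i)$ compatible with the prescribed $(w_{i-1},w_i,d_i)$; a short case analysis shows that at every position at least one value of $v_{i-1}$ is admissible, and global consistency is arranged by propagating the choice around the cyclic positions. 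Membership $\vec{u}\in\mathcal{P}_{\Theta([\overline{\psi}])}$ then follows from the combinatorial definition of $\mathcal{P}_{\Theta(\psi)}$ and the genericity bounds on $\overline{\rho}$.

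For the injectivity assertion $\overline{\psi}_{\vec{w},\vec{d}}=\overline{\psi}_{\vec{w}',\vec{d}'}\Rightarrow(\vec{w},\vec{d})=(\vec{w}',\vec{d}')$, reducing the exponent congruence modulo $q+1$ (using $q\equiv -1$) yields
\[
\sum_i\bigl((-1)^{w_i}-(-1)^{w'_i}\bigr)p^ir_i\equiv 2\sum_i(d'_i-d_i)p^i\pmod{q+1}.
\]
Using $0\le r_i\le p-3$ and $|d_i|,|d'_i|\le 1$, both sides are integers of absolute value at most $q-1<q+1$, so the congruence is an equality of integers. A signed base-$p$ digit argument (coefficients bounded by $p-1<p$), together with the $\vec{w}$-$\vec{d}$ compatibility from the theorem, forces equality coordinate by coordinate. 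Combined with the observation that every $\vec{d}\in\{-1,0,1\}^{\ZZ/f\ZZ}\setminus\{\vec{0}\}$ admits a unique consistent $\vec{w}$ (propagate around the cycle starting at any position with $d_i\neq 0$, using that zero positions transmit equality for $i>0$ and inequality for $i=0$), while $\vec{d}=\vec{0}$ admits none, one obtains the count $\#W_D(\overline{\rho})=3^f-1$.

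\textbf{Main obstacle.} The chief technical subtlety is the injectivity step: after reducing modulo $q+1$ the congruence must be promoted to an equality of integers, and one must rule out genuine cancellations such as solutions of the shape $k(q+1)$ for $k=\pm 1$. This requires the genericity bounds $r_i\le p-3$ to be tight enough to exclude carries, and the compatibility relations between $\vec{w}$ and $\vec{d}$ must be used to eliminate the remaining spurious collisions.
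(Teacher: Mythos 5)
Your plan for the forward direction (read $(\vec{w},\vec{d})$ off Tables \ref{a}, \ref{b} via Propositions \ref{19} and \ref{11}) and for the existence half of the converse (construct $(\vec{u},\vec{v})$ realising a prescribed $(\vec{w},\vec{d})$) matches the paper's route, though your description of how to close the cyclic construction is vaguer than the paper's explicit recipe $v_{i-1}\equiv d_i\ (\op{mod}2)$ followed by an iterative determination of $\vec{u}$.

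The injectivity step, however, has a genuine gap, and it is precisely the one you flag at the end without resolving. Reducing the exponent congruence modulo $q+1$ replaces $q^{w_i}-q^{w'_i}$ by $(-1)^{w_i}-(-1)^{w'_i}=-2(w_i-w'_i)$, so the combined congruence is
\[
-2\sum_{i=0}^{f-1}p^i\bigl[(w_i-w'_i)r_i-(d_i-d'_i)\bigr]\equiv 0\pmod{q+1}.
\]
Since $q$ is odd, $q+1$ is even and you cannot cancel the factor $2$ against the modulus; you only obtain $X:=\sum_ip^i[(w_i-w'_i)r_i-(d_i-d'_i)]\equiv 0\pmod{(q+1)/2}$, and the genericity bound gives only $|X|\le q-1$, which is not less than $(q+1)/2$. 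So the spurious solutions $X=\pm(q+1)/2$ are not excluded, and your assertion that both sides ``are integers of absolute value at most $q-1$'' is also numerically false for the left side once $p\ge 7$, since $\bigl|\sum_i((-1)^{w_i}-(-1)^{w'_i})p^ir_i\bigr|$ can be as large as $\tfrac{2(p-3)}{p-1}(q-1)>q-1$. The paper avoids all of this by working modulo $q^2-1$ and using $q^{w_i}-q^{w'_i}=(q-1)(w_i-w'_i)$ \emph{before} any reduction: the exponent difference is exactly $(q-1)X$, so $(q-1)X\equiv 0\pmod{(q-1)(q+1)}$ gives $X\equiv 0\pmod{q+1}$ on the nose (no factor of $2$), whence $|X|\le q-1<q+1$ forces $X=0$; the coordinate-by-coordinate digit argument and the $\vec{w}$-$\vec{d}$ compatibility relations then finish without any spurious-collision case. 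You would need to either adopt that factorisation or give a separate argument ruling out $X=\pm(q+1)/2$; as written the injectivity claim is not established.
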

\begin{proof}
    The relations of $\vec{w}$ and $\vec{d}$ follow from Proposition \ref{19}, Tables \ref{a} and \ref{b} in Proposition \ref{11}. We need to prove $\vec{w}$ is uniquely determined by $\vec{d}\neq (0,...,0)$ and construct $\vec{u}$ and $\vec{v}$ for each $\vec{d}\neq (0,...,0)$.\par
If $d_i=0$ for all $0\leq i\leq f-1$, we deduce from the first relation that all $w_i$ should be the same, while the second relation tells us that $w_0$ and $w_{f-1}$
are different. This is a contradiction.\par 
If $\vec{d}\neq 0$, we will show the uniqueness of $\vec{w}$. By the two relations, we have if $d_i=-1$ or $1$ , $w_i$ is determined by $d_i$; if $d_i=0$, $w_i$ is determined by the nearest $d_j\neq 0$ $(j<i)$ if there exists $j<i$, $d_j\neq 0$, and by the farthest $d_j\neq 0$ $(j>i)$ if $d_j=0$ for all $j<i$.\par 
For $\vec{d}\neq (0,...,0)$, to show $\overline{\psi}_{\vec{w},\vec{d}}\in W_D(\overline\rho)$, by Lemma \ref{P}, it suffices to find $\vec{u},\vec{v}\in\{0,1\}^{\ZZ/f\ZZ}$ such that $\overline{\psi}_{\vec{w},\vec{d}}=\overline{\psi}_{\vec{u},\vec{v}}$. 
Let $v_{i-1}\equiv d_i(\text{mod }2)$ for $0\leq i\leq f-1$, we fix the choice of $\vec{v}$, and construct $\vec{u}$. By Table \ref{a} and Table \ref{b}, if $d_i=-1$ or $1$, $u_i$ is determined by $(v_{i-1},v_i,d_i)$, and if $d_i=0$, $u_i$ is determined by $(v_{i-1},v_i,u_{i-1})$. \par
If $\overline{\psi}_{\vec{w},\vec{d}}=\overline{\psi}_{\vec{w}',\vec{d}'}$, then 
\begin{equation*}   
    0 \equiv(q-1)\sum_{i=0}^{f-1}p^i((w_i-w'_{i})r_i-(d_i-d'_{i}))\ (\op{mod}q^2-1).  
\end{equation*}
Note that $0\leq r_i\leq p-3$ for all $i$ since $\overline\rho$ is generic. Since 
$\sum_{i=0}^{f-1}p^i\left|(w_i-w'_{i})r_i-(d_i-d'_{i})\right|\leq \sum_{i=0}^{f-1}p^i(r_i+2)< q+1$, we have $\sum_{i=0}^{f-1}p^i((w_i-w'_{i})r_i-(d_i-d'_{i}))=0$. 
Since $\left|(w_i-w'_{i})r_i-(d_i-d'_{i})\right|\leq p-1$ for all $i$, we have $(w_i-w'_{i})r_i-(d_i-d'_{i})=0$ for all $i$. Since $w_i\leq w'_i$ if $d_i>d'_i$ by the relations above, we have $d_i=d'_i$ for all $i$. 
Then $\vec{w}=\vec{w}'$ since $\vec{w}$ is uniquely determined by $\vec{d}$. 
\end{proof}

\begin{definition}
    Let $\overline{\rho}$ be generic reducible split as in Lemma \ref{7}.  
    For each $f$-tuple ${\vec{v}}\in\{0,1\}^{\ZZ/f\ZZ}$, 
    we define a set $U_{\vec{v}}$ consisting of $f$-tuples ${\vec{u}}\in\{0,1\}^{\ZZ/f\ZZ}$ such that: 
    \begin{itemize}
        \item If $v_{f-1}=1$, then $u_{f-1}-u_0\equiv v_{f-1}-v_0+1\ (\op{mod}2)$;
        \item if $0<i\leq f-1$ and $v_{i-1}=1$, then $u_{i-1}-u_i\equiv v_{i-1}-v_i\ (\op{mod}2)$.
    \end{itemize}    
    For ${\vec{v}}\in\{0,1\}^{\ZZ/f\ZZ}$, 
    we define: 
    \begin{equation}
        W_{D}^{\vec{v}}(\overline{\rho}):=\{\overline{\psi}: l^{\times} \rightarrow \FF^{\times}\mid\exists \vec{u}\in U_{\vec{v}}, \text{ s.t. } \sigma_{\vec{v}}(\overline{\rho})=\overline{\Theta}([\overline{\psi}])_{\vec{u}}\}, 
    \end{equation}
    where $\sigma_{\vec{v}}(\overline{\rho})=\lambda_{\vec{v}}\left(r_{0}, \ldots, r_{f-1},p\right) \otimes\left(\overline{\kappa}_0 \circ \operatorname{det}^{e(\lambda_{\vec{v}})\left(r_{0}, \ldots, r_{f-1},p\right)}\right)$, and define: 
    $$W_{D,\vec{v}}(\overline{\rho}):=\{\overline\psi:l^{\times} \rightarrow \FF^{\times}\mid\exists\vec{u}\in\{0,1\}^{\ZZ/f\ZZ},\text{ s.t. } \overline\psi=\overline\psi_{\vec{u},\vec{v}}\}.$$
\end{definition}
\begin{proposition}\label{27}
    We have the following properties:
    \begin{enumerate}[label=(\roman*)]
        \item If $\ell(\lambda_{\vec{v}})=f$, $W_{D}^{\vec{v}}(\overline{\rho})=\emptyset$.
        \item If $\ell(\lambda_{\vec{v}})=d<f$, $\#W_{D}^{\vec{v}}(\overline{\rho})=2^{f-d}$.
        \item Assume $\overline{\psi}=\overline{\psi}_{\vec{u},\vec{v}}\in W_{D}^{\vec{v}}(\overline{\rho})$, $\vec{u}\in U_{\vec{v}}$. If there exist $\vec{u}'$ and ${\vec{v}}'$, such that $\overline{\Theta}([\overline\psi])_{\vec{u}'}=\sigma_{\vec{v}'}(\overline{\rho})$, then ${\vec{v}}\leq {{\vec{v}}'}$. 
        In particular, if $W_{D}^{\vec{v}}(\overline{\rho})\cap W_{D}^{\vec{v}'}(\overline{\rho})\neq \emptyset$, then ${\vec{v}}= {{\vec{v}}'}$. 
        \item $W_{D}(\overline{\rho})=\bigsqcup\limits_{{\vec{v}}} W_{D}^{\vec{v}}(\overline{\rho})$. 
        \item $W_{D}^{\vec{v}}(\overline{\rho})=W_{D,\vec{v}}(\overline{\rho})\backslash\bigcup_{\vec{v}'<\vec{v}}W_{D,\vec{v}'}(\overline{\rho})$.
    \end{enumerate}
\end{proposition}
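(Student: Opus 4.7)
The plan is to address the five parts in the stated order, with (iii) being the heart of the argument. For (i), if all $v_i=1$ then the conditions defining $U_{\vec{v}}$ reduce to $u_{i-1}\equiv u_i\ (\op{mod}2)$ for $0<i\leq f-1$ together with $u_{f-1}\not\equiv u_0\ (\op{mod}2)$; summing these around the cycle gives $0\equiv 1\ (\op{mod}2)$, so $U_{\vec{v}}=\emptyset$ and hence $W_D^{\vec{v}}(\overline{\rho})=\emptyset$. For (ii) I would view the conditions of $U_{\vec{v}}$ as parity constraints on a graph with vertices $u_0,\ldots,u_{f-1}$ arranged in a cycle, having $d=\ell(\lambda_{\vec{v}})$ edges chosen according to the positions where $v_i=1$. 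When $d<f$ these edges form a disjoint union of $f-d$ paths, each contributing an independent binary choice, so $|U_{\vec{v}}|=2^{f-d}$. The map $\vec{u}\mapsto \overline{\psi}_{\vec{u},\vec{v}}$ is injective because $w_i\equiv u_i+v_i\ (\op{mod}2)$ by Proposition~\ref{19} and $(\vec{w},\vec{d})$ determines $\overline{\psi}$ by Theorem~\ref{26}; each $\overline{\psi}_{\vec{u},\vec{v}}$ lies in $W_D(\overline{\rho})$ by Proposition~\ref{prop::serre wts}, since the resulting $(\vec{w},\vec{d})$ automatically satisfies the relations of Theorem~\ref{26} (a direct check from Tables~\ref{a} and~\ref{b}). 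Hence $\#W_D^{\vec{v}}(\overline{\rho})=2^{f-d}$.

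The crux is (iii). For each index $i>0$ I would enumerate, using Table~\ref{a}, the quadruples $(v_{i-1},v_i,u_{i-1},u_i)$ compatible with the triple $(w_{i-1},w_i,d_i)$ determined by $\overline{\psi}$, and then impose $\vec{u}\in U_{\vec{v}}$; the $i=0$ case is handled analogously using Table~\ref{b}. The case analysis yields: if $d_i=0$ then $v_{i-1}=0$ is forced by the table alone; in the two extremal cases $(d_i,w_{i-1})\in\{(-1,1),(1,0)\}$ (with the analogous swap at $i=0$), $v_{i-1}=1$ is forced; and in the two remaining ambiguous cases $(d_i,w_{i-1})\in\{(-1,0),(1,1)\}$, both values of $v_{i-1}$ are table-compatible but the $U_{\vec{v}}$ condition rules out $v_{i-1}=1$. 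Thus, when $\vec{u}\in U_{\vec{v}}$, the value $v_{i-1}$ is always the minimum of those compatible with $(w_{i-1},w_i,d_i)$. Applying the same local analysis to any other realization $\overline{\Theta}([\overline{\psi}])_{\vec{u}'}=\sigma_{\vec{v}'}(\overline{\rho})$ yields $v'_{i-1}\geq v_{i-1}$ at every index, hence $\vec{v}\leq \vec{v}'$. The ``in particular'' clause then follows by antisymmetry.

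For (iv), disjointness is immediate from (iii) and $\bigsqcup_{\vec{v}}W_D^{\vec{v}}(\overline{\rho})\subseteq W_D(\overline{\rho})$ by Proposition~\ref{prop::serre wts}; combining parts (i) and (ii) gives
\[
\sum_{\vec{v}}\#W_D^{\vec{v}}(\overline{\rho})=\sum_{d=0}^{f-1}\binom{f}{d}2^{f-d}=3^f-1=\#W_D(\overline{\rho}),
\]
using the count from Theorem~\ref{26}, so the inclusion is an equality. Part (v) is then formal: $W_D^{\vec{v}}(\overline{\rho})\subseteq W_{D,\vec{v}}(\overline{\rho})$ by construction, and any $\overline{\psi}\in W_{D,\vec{v}}(\overline{\rho})$ lies in a unique $W_D^{\vec{v}_0}(\overline{\rho})$ by (iv) with $\vec{v}_0\leq \vec{v}$ by (iii), so $\overline{\psi}\in W_D^{\vec{v}}(\overline{\rho})$ if and only if $\overline{\psi}$ lies in no $W_{D,\vec{v}'}(\overline{\rho})$ with $\vec{v}'<\vec{v}$. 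The only real obstacle is the case bookkeeping in (iii); once completed, the rest collapses to counting and the formal structure of~$\leq$.
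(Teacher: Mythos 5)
Your proof is correct and follows essentially the same route as the paper's: parity constraints on $U_{\vec{v}}$ for (i) and (ii), a table-driven comparison argument for (iii), and a counting argument together with the partial order $\leq$ for (iv) and (v). The only cosmetic differences are that you frame (ii) in graph-theoretic language (paths in a cycle) rather than the paper's sequential traversal, and you package (iii) as ``$v_{i-1}$ is the minimum of the table-compatible values when $\vec{u}\in U_{\vec{v}}$'' instead of the paper's contradiction argument starting from a particular $d_{i+1}\in\{-1,1\}$; the underlying table checks (including the sign swap at $i=0$ coming from the $+1$ in the $U_{\vec{v}}$ condition) are the same.
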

\begin{proof}
    For (i), since $\ell(\lambda_{\vec{v}})=f$, we have $v_i=1$ for $0\leq i\leq f-1$, then $u_{f-1}-u_0\equiv 1\ (\op{mod}2)$ and $u_{i-1}-u_i\equiv 0\ (\op{mod}2)$ for $0<i\leq f-1$. It is easy to see that $U_{\vec{v}}=\emptyset$.  
    For (ii), there exists $i$, such that $v_i=0$. Let $u_{i+1}=0$ or $1$. Then we determine $u_{j}$ for $j=i+2,...,f-1,0,...,i$ in turn. If $v_j=0$, there are two choices for $u_{j+1}$, and if $v_j=1$, $u_{j+1}$ is uniquely determined by $u_{j}$, 
    so there are $2^{f-d}$ choices for $\vec{u}$. Since $\overline{\psi}_{\vec{u},\vec{v}}=\overline{\psi}_{\vec{u}',\vec{v}}$ if and only if $\vec{u}=\vec{u}'$ by $w_i\equiv u_i+v_i\ (\op{mod}2)$ and (\ref{w}), we have $\#W_{D}^{\vec{v}}(\overline{\rho})=2^{f-d}$. \par
    Next we prove (iii). We need to prove: for $0\leq i\leq f-1$, if $v_i=1$, then $v'_i=1$. Since $\overline\psi\in W_D(\overline{\rho})$, we may suppose $\overline\psi=\overline\psi_{\vec{w},\vec{d}}$ by Theorem \ref{26} where $\vec{w},\vec{d}$ are determined by $\overline\psi$.
    Since $v_i=1$, $d_{i+1}$ can be $-1$ or $1$ by Table \ref{a} and Table \ref{b}. We will show that if $v_i'=0$, then $w_i\neq w'_i$, and we deduce a contradiction. We take $d_{i+1}=-1$ and $i\neq f-1$ as an example. The other cases can be treated similarly. 
    By $u_i-u_{i+1}\equiv v_i-v_{i+1}\ (\op{mod}2)$, Table \ref{a} and Table \ref{b}, we have $u_i=0$. 
    If $v'_i=0$, we have $u'_i=0$ by $d_{i+1}=-1$ and Table \ref{a}. Then $w_i\equiv u_i+v_i\equiv 1\ (\op{mod}2)$ and $w'_i\equiv u'_i+v'_i\equiv 0\ (\op{mod}2)$, which is a contradiction. \par
    If there exists $\overline\psi\in W_{D}^{\vec{v}}(\overline{\rho})\cap W_{D}^{\vec{v}'}(\overline{\rho})$, then there exist $\vec{u}$ and $\vec{v}$ such that $\overline\psi=\overline\psi_{\vec{u},\vec{v}}$ since $\overline\psi\in W_{D}^{\vec{v}}(\overline{\rho})$. Since $\overline\psi_{\vec{u},\vec{v}}\in W_{D}^{\vec{v}'}(\overline{\rho})$, we have $\vec{v}'\leq \vec{v}$. Similarly we also have $\vec{v}\leq \vec{v}'$, then $\vec{v}= \vec{v}'$. 
    We deduce that $W_{D}^{\vec{v}}(\overline{\rho})\cap W_{D}^{\vec{v}'}(\overline{\rho})=\emptyset$ if $\vec{v}\neq\vec{v}'$. By (ii) and Theorem \ref{26}, we have $\#\bigsqcup\limits_{{\vec{v}}} W_{D}^{\vec{v}}(\overline{\rho})=\sum_{d=0}^{f-1}\tbinom{f}{d}2^{f-d}=3^{f}-1=\#W_{D}(\overline{\rho})$ which implies (iv). \par
    By (iii) we have $W_{D}^{\vec{v}}(\overline{\rho})\subseteq W_{D,\vec{v}}(\overline{\rho})\backslash\bigcup\limits_{\vec{v}'<\vec{v}}W_{D,\vec{v}'}(\overline{\rho})$. For $\overline\psi\in W_{D,\vec{v}}(\overline{\rho})\backslash\bigcup\limits_{\vec{v}'<\vec{v}}W_{D,\vec{v}'}(\overline{\rho})$, by (iv) there exists $\vec{v}''$ such that $\overline\psi\in W_{D}^{\vec{v}''}(\overline{\rho})$. By (iii), we have $\vec{v}''\leq\vec{v}$. Since $\overline\psi\notin \bigcup_{\vec{v}'<\vec{v}}W_{D,\vec{v}'}(\overline{\rho})$, 
    we have $\vec{v}''=\vec{v}$, i.e. $\overline\psi\in W_{D}^{\vec{v}}(\overline{\rho})$. 
\end{proof}
The following theorem determines $W_D(\overline{\rho})$ for non-semisimple generic $\overline{\rho}$. 

\begin{theorem}\label{thm::nonsplit}
    Let $\overline{\rho}$ be generic reducible nonsplit as in Definition \ref{2.6} and let $\vec{v}(\overline{\rho})\in\{0,\ldots,f-1\}^{\ZZ/f\ZZ}$ be the $f$-tuple associated to $\overline{\rho}.$ Then
$$W_{D}(\overline{\rho})=\bigsqcup_{{\vec{v}}\leq\vec{v}(\overline{\rho})}W_{D}^{{\vec{v}}}(\overline{\rho}^{{\rm ss}}).$$
Note that each $W_{D}^{{\vec{v}}}(\overline{\rho}^{{\rm ss}})$ is explicit.
\end{theorem}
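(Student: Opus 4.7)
The plan is to prove both inclusions by combining Proposition \ref{prop::serre wts} with the explicit description of $W_{\mathrm{GL}_2}(\overline{\rho})$ from Lemma \ref{18} and the stratification results of Proposition \ref{27}. The key observation is that by Lemmas \ref{7} and \ref{18}, the $\mathrm{GL}_2$-Serre weights satisfy $\sigma_{\vec{v}}(\overline{\rho}) = \sigma_{\vec{v}}(\overline{\rho}^{\mathrm{ss}})$ whenever $\vec{v} \leq \vec{v}(\overline{\rho})$, since both are defined by the same formula $\lambda_{\vec{v}}(r_0,\ldots,r_{f-1},p) \otimes \overline{\kappa}_0 \circ \det^{e(\lambda_{\vec{v}})(r_0,\ldots,r_{f-1},p)}$ in terms of the same tuple $(r_0,\ldots,r_{f-1})$. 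Thus $W_{\mathrm{GL}_2}(\overline{\rho}) = \{\sigma_{\vec{v}}(\overline{\rho}^{\mathrm{ss}}) : \vec{v} \leq \vec{v}(\overline{\rho})\} \subseteq W_{\mathrm{GL}_2}(\overline{\rho}^{\mathrm{ss}})$.

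For the inclusion $\supseteq$, suppose $\overline{\psi} \in W_D^{\vec{v}}(\overline{\rho}^{\mathrm{ss}})$ with $\vec{v} \leq \vec{v}(\overline{\rho})$. By definition there exists $\vec{u} \in U_{\vec{v}}$ with $\overline{\Theta}([\overline{\psi}])_{\vec{u}} = \sigma_{\vec{v}}(\overline{\rho}^{\mathrm{ss}}) = \sigma_{\vec{v}}(\overline{\rho})$, so $\sigma_{\vec{v}}(\overline{\rho}) \in \mathrm{JH}(\overline{\Theta}([\overline{\psi}])^{\mathrm{ss}}) \cap W_{\mathrm{GL}_2}(\overline{\rho})$, which by Proposition \ref{prop::serre wts} forces $\overline{\psi} \in W_D(\overline{\rho})$.

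For the inclusion $\subseteq$, let $\overline{\psi} \in W_D(\overline{\rho})$. By Proposition \ref{prop::serre wts} there is some $\sigma \in \mathrm{JH}(\overline{\Theta}([\overline{\psi}])^{\mathrm{ss}}) \cap W_{\mathrm{GL}_2}(\overline{\rho})$; by the description of $W_{\mathrm{GL}_2}(\overline{\rho})$ above, $\sigma = \sigma_{\vec{v}}(\overline{\rho}^{\mathrm{ss}})$ for some $\vec{v} \leq \vec{v}(\overline{\rho})$. In particular $\overline{\psi} \in W_D(\overline{\rho}^{\mathrm{ss}})$, so by Proposition \ref{27}(iv) there is a unique $\vec{v}''$ with $\overline{\psi} \in W_D^{\vec{v}''}(\overline{\rho}^{\mathrm{ss}})$. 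Since we also have $\sigma_{\vec{v}}(\overline{\rho}^{\mathrm{ss}}) = \overline{\Theta}([\overline{\psi}])_{\vec{u}'}$ for some $\vec{u}'$, Proposition \ref{27}(iii) applied to $\overline{\psi}$ yields $\vec{v}'' \leq \vec{v} \leq \vec{v}(\overline{\rho})$, so $\overline{\psi}$ lies in the asserted union. Disjointness of the union is immediate from Proposition \ref{27}(iv) (the pieces $W_D^{\vec{v}}(\overline{\rho}^{\mathrm{ss}})$ are already pairwise disjoint as $\vec{v}$ varies).

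The main conceptual obstacle is the $\subseteq$ direction: one needs to show that a $\overline{\psi}$ sitting in some $W_D^{\vec{v}''}(\overline{\rho}^{\mathrm{ss}})$ cannot ``sneak in'' from a stratum with $\vec{v}'' \not\leq \vec{v}(\overline{\rho})$; this is exactly the content of the monotonicity in Proposition \ref{27}(iii), which ensures that the witness $\vec{v}$ coming from the intersection with $W_{\mathrm{GL}_2}(\overline{\rho})$ dominates the true stratum parameter $\vec{v}''$ of $\overline{\psi}$. Once this monotonicity is in hand the proof is essentially a bookkeeping exercise.
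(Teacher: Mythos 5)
Your proof is correct and follows essentially the same route as the paper's: use Proposition \ref{prop::serre wts} together with Lemma \ref{18} (noting $\sigma_{\vec{v}}(\overline{\rho})=\sigma_{\vec{v}}(\overline{\rho}^{\mathrm{ss}})$) to reduce the nonsplit case to the semisimple one, and then invoke the stratification/monotonicity from Proposition \ref{27}. The only cosmetic difference is that you cite parts (iii) and (iv) of Proposition \ref{27} directly, while the paper packages the same monotonicity as part (v) and cites (iv) and (v).
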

\begin{proof}
    By Lemma \ref{18}, we have $W_{D}(\overline{\rho})=\bigcup\limits_{\vec{v}\leq\vec{v}(\overline{\rho})}W_{D,\vec{v}}(\overline{\rho}^{{\rm ss}})$. By (iv) and (v) of Proposition \ref{27}, we have $\bigcup\limits_{\vec{v}\leq\vec{v}(\overline{\rho})}W_{D,\vec{v}}(\overline{\rho}^{{\rm ss}})=\bigsqcup\limits_{{\vec{v}}\leq\vec{v}(\overline{\rho})}W_{D}^{{\vec{v}}}(\overline{\rho}^{{\rm ss}})$. 
\end{proof}

\begin{remark}\label{rk::number of serre wt}
 If $\overline{\rho}$ is reducible nonsplit, i.e. $\ell(\lambda_{\vec{v}(\overline{\rho})})=d<f$, by Proposition \ref{27}(ii), $\# W_{D}(\overline{\rho})= \sum_{k=0}^{d} \tbinom{d}{k} 2^{f-k} = 3^d2^{f-d}.$ Note that this number also appears in  \cite[Prop. 14.7]{Breuil-Paskunas}.
\end{remark}

\begin{theorem}\label{thm::irreducible case}
    Let $\overline{\rho}$ be generic irreducible as in Lemma \ref{16}, then $\overline{\psi}=\overline{\iota}^{(q+1)b+1+c}\in W_{D}(\overline{\rho})$, if and only if
    $$(q+1)b+1+c\equiv \sum_{i=0}^{f-1} q^{w_{i}} p^{i} r_{i}+(1-q)\sum_{i=0}^{f-1}d_ip^i ~(\op{mod}q^2-1)$$
    where $w_{i} \in\{0,1\}$ and $d_{i} \in\{-1,0,1\}$ satisfy the following relations.
    \begin{itemize}
        \item For $i>0$, 
            $w_i=\left\{\begin{array}{cl}
            1&  \text { if } d_i=-1\\
            0&  \text { if } d_i=1             
            \end{array}\right.$, 
        and if $d_i=0$, then $(w_{i-1},w_i)=(0,0)$ or $(1,1)$.
        
        \item 
            $(w_{f-1},w_0)=\left\{\begin{array}{cl}
            (1,1)&  \text { if } d_0=-1\\
            (0,0)&  \text { if } d_0=1             
            \end{array}\right.$.  
    \end{itemize}
 Note that, from the above relations, we have:
\begin{itemize}
    \item For every $0<i\leq f-1$, if $d_0=1$, $d_{i}=-1$, and $d_{j}=0$ for every $i<j\leq f-1$, 
    or $d_0=-1$, $d_{i}=1$, and $d_{j}=0$ for every $i<j\leq f-1$, $\vec{w}$ does not exist; 
    \item For every $0\leq i\leq f-1$, if $d_0=1$, $d_{i}=1$, and $d_{j}=0$ for every $i<j\leq f-1$, or $d_0=-1$, $d_{i}=-1$, and $d_{j}=0$ for every $i<j\leq f-1$, there is exactly one $\vec{w}$; 
    \item If $d_0=0$, there are exactly two choices for $\vec{w}$. 
\end{itemize}
 We write $\overline{\psi}_{\vec{w},\vec{d}}$ for such $\overline{\psi}$, and we have $\overline{\psi}_{\vec{w},\vec{d}}=\overline{\psi}_{\vec{w}',\vec{d}'}$ if and only if $(\vec{w},\vec{d})=(\vec{w}',\vec{d}').$  
\end{theorem}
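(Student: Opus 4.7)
The plan is to mimic the proof of Theorem \ref{26} closely, substituting Table \ref{c} for Table \ref{b} in the analysis of the $i=0$ contribution. For necessity, any $\overline{\psi} = \overline{\iota}^{(q+1)b+1+c} \in W_D(\overline{\rho})$ gives via Proposition \ref{prop::serre wts} and Lemma \ref{P} a pair $(\vec{u},\vec{v})$ with $\overline{\Theta}([\overline{\psi}])_{\vec{u}} = \sigma_{\vec{v}}(\overline{\rho})$; substituting into \eqref{3.6}, Proposition \ref{19}, and Proposition \ref{11} yields the stated congruence with $w_i \equiv u_i + v_i \pmod{2}$. The bullet for $i>0$ is read off Table \ref{a} as in Theorem \ref{26}. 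The bullet for $i=0$ is read off Table \ref{c}, which shows that $d_0=1$ occurs only at entries producing $(w_{f-1},w_0)=(0,0)$, $d_0=-1$ only at $(1,1)$, while $d_0=0$ places no constraint on $(w_{f-1},w_0)$.

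For the three observations about existence and count of $\vec{w}$ given $\vec{d}$, walk around the cycle starting at $w_0$: the $i>0$ relations fix $w_i$ whenever $d_i \neq 0$ and propagate $w_i = w_{i-1}$ whenever $d_i=0$. When $d_0 \neq 0$ both endpoints $w_0$ and $w_{f-1}$ are prescribed, so consistency of the forward walk requires the latest nonzero $d_j$ in $\{1,\ldots,f-1\}$ (if any) to carry the same sign as $d_0$; this produces exactly the listed "no $\vec{w}$" and "exactly one $\vec{w}$" patterns. When $d_0 = 0$ both endpoints are unconstrained by $d_0$, so the two values $w_0 \in \{0,1\}$ propagate independently, yielding exactly two valid $\vec{w}$.

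For sufficiency, given $(\vec{w},\vec{d})$ satisfying the relations, we build $(\vec{u},\vec{v})$ so that $\overline{\psi}_{\vec{u},\vec{v}} = \overline{\psi}_{\vec{w},\vec{d}}$. Take $v_{i-1} \equiv d_i \pmod{2}$ for $i=1,\ldots,f-1$, and set $v_{f-1}=0$ if $d_0 \neq 0$, $v_{f-1}=1$ if $d_0=0$. This choice exploits the shape of Table \ref{c}: every entry realising $d_0 \neq 0$ lies in a $v_{f-1}=0$ row (so the appropriate $(u_{f-1},u_0)$ is unique), while in the two $v_{f-1}=1$ rows every $(u_{f-1},u_0)$ realises $d_0=0$, so the two free $w_0$ values can be matched. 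The remaining entries of $\vec{u}$ are then read off Tables \ref{a} and \ref{c} row by row as in Theorem \ref{26}, and the identity $w_i \equiv u_i + v_i \pmod{2}$ ensures the required equality via Lemma \ref{P}. The final claim $\overline{\psi}_{\vec{w},\vec{d}} = \overline{\psi}_{\vec{w}',\vec{d}'} \Leftrightarrow (\vec{w},\vec{d})=(\vec{w}',\vec{d}')$ follows from the same size estimate as in Theorem \ref{26}: the equality gives $(q-1)\sum p^i((w_i-w_i')r_i-(d_i-d_i')) \equiv 0 \pmod{q^2-1}$, and the irreducible genericity bound $\sum p^i(r_i+2) \leq q$ forces the inner sum to be zero coefficient-wise, whence $\vec{d}=\vec{d}'$ and then $\vec{w}=\vec{w}'$.

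The hardest step will be the $i=0$ consistency check in the sufficiency construction: unlike the split case, where $v_{f-1} \equiv d_0 \pmod{2}$ is a uniform rule, Table \ref{c} requires choosing $v_{f-1}$ according to whether $d_0$ vanishes, and for each of the four $(v_{f-1},v_0)$ rows one must verify that the row does contain the target $d_0$ at an $(u_{f-1},u_0)$ giving the correct parities of both $w_{f-1}$ and $w_0$. This case check is ultimately short but markedly less symmetric than the one in Theorem \ref{26}, and it is precisely the non-symmetry of Table \ref{c} that forces the asymmetric form of the $i=0$ bullet in the irreducible case.
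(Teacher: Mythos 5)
Your overall plan tracks the paper's argument closely: necessity via Lemma \ref{P} and Propositions \ref{19}, \ref{11} with Tables \ref{a}, \ref{c}; the combinatorial count of $\vec{w}$ given $\vec{d}$ by walking the cycle; and the sufficiency construction that picks $v_{f-1}$ depending on whether $d_0$ vanishes, exploiting the special shape of Table \ref{c}. All of this is essentially what the paper does.

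However, your injectivity argument has a genuine gap. You claim that the bound $\sum_i p^i(r_i+2) \leq q$ forces $\sum_i p^i\bigl((w_i-w_i')r_i-(d_i-d_i')\bigr)=0$ \emph{coefficient-wise}. This is not true in the irreducible case. The genericity hypothesis here is $1\leq r_0\leq p-2$ (not $r_0\leq p-3$), so the $i=0$ term satisfies only $\bigl|(w_0-w_0')r_0-(d_0-d_0')\bigr|\leq p$, not $\leq p-1$. A vanishing integer $\sum_i p^i a_i=0$ with $|a_0|\leq p$ and $|a_i|\leq p-1$ for $i>0$ does not force all $a_i=0$: for instance $a_0=p$, $a_1=-1$ is possible. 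The paper explicitly treats this carry scenario — namely $(w_0-w_0')r_0-(d_0-d_0')=p$, with a chain of $p-1$'s and a $-1$ at some position $i$ — and rules it out by tracing the implications of the first and second relations step by step through the cycle. Your proof needs this extra case analysis; as written, the deduction ``zero coefficient-wise, whence $\vec{d}=\vec{d}'$'' does not follow.

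A secondary point: even once you have coefficient-wise vanishing, getting $\vec{w}=\vec{w}'$ requires more than a size estimate. If $r_i=0$ for some $i>0$, then $(w_i-w_i')r_i=0$ carries no information, and you must instead use that the relations determine $\vec{w}$ uniquely from $(\vec{d},w_0)$; the fact $w_0=w_0'$ comes from $r_0\geq 1$. The paper spells this out; your ``and then $\vec{w}=\vec{w}'$'' implicitly assumes it but should be made explicit.
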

\begin{proof}
    The two relations of $\vec{w}$ and $\vec{d}$ follow from Proposition \ref{19}, Table \ref{a} and Table \ref{c} in Proposition \ref{11}. \par
    For every $0<i\leq f-1$, if $d_0=1$, $d_{i}=-1$, and $d_{j}=0$ for every $i<j\leq f-1$, we deduce from the second relation that $w_{f-1}=w_0=0$, and from the first relation    that $w_i=1$ and $w_i=w_{i+1}=\cdots=w_{f-1}$, which is a contradiction; 
    if $d_0=-1$, $d_{i}=1$, and $d_{j}=0$ for every $i<j\leq f-1$, then $\vec{w}$ does not exist for similar reason.\par
    For every $0\leq i\leq f-1$, if $d_0=1$, $d_{i}=1$, and $d_{j}=0$ for every $i<j\leq f-1$, we will show the uniqueness of $\vec{w}$. By the relations above, we have $w_0=0$, $w_j=0$ for $i\leq j\leq f-1$. For $1<j<i$, we have if $d_j=-1$ or $1$ , $w_j$ is determined by $d_j$; if $d_j=0$, $w_j$ is determined by the nearest $d_k\neq 0$ $(k<j)$. 
    If $d_0=-1$, $d_{i}=-1$, and $d_{j}=0$ for every $i<j\leq f-1$, $\vec{w}$ is uniquely determined by $\vec{d}$ for similar reason.\par 
    Next for each choice of $\vec{d}$ in the second case, to show $\overline{\psi}_{\vec{w},\vec{d}}\in W_D(\overline\rho)$, by Lemma \ref{P}, it suffices to find $\vec{u},\vec{v}\in\{0,1\}^{\ZZ/f\ZZ}$ such that $\overline{\psi}_{\vec{w},\vec{d}}=\overline{\psi}_{\vec{u},\vec{v}}$. Let $v_{f-1}=0$, $v_{k-1}\equiv d_k\ (\op{mod}2)$ for $0\leq k\leq f-1$, and let $u_0=v_0$, $u_k=0$ for $i\leq k\leq f-1$; for $0<k<i$, 
    $u_k$ is uniquely determined in the sense that, from Table \ref{a}, if $d_k=-1$ or $1$, $u_k$ is determined by $(v_{k-1},v_k,d_k)$, and if $d_k=0$, $u_k$ is determined by $(v_{k-1},v_k,u_{k-1})$. For every $0\leq i\leq f-1$, if $d_0=-1$, $d_{i}=-1$, and $d_{j}=0$ for every $i<j\leq f-1$, let $v_{f-1}=0$, $v_{k-1}\equiv d_k\ (\op{mod}2)$ for $0\leq k\leq f-1$, 
    and let $u_0\equiv 1+v_0\ (\op{mod}2)$, $u_k=1$ for $i\leq k\leq f-1$; for $0<k<i$, $u_k$ is uniquely determined for similar reason. \par
    If $d_0=0$, then $w_0$ can be $0$ or $1$, and other $w_i$'s are uniquely determined by $w_0$ and the two relations above. Let $v_{f-1}=1$, and $v_{k-1}\equiv d_k\ (\op{mod}2)$ otherwise, then let $u_0\equiv w_0-v_0\ (\op{mod}2)$, and other $u_i$'s are uniquely determined. \par
    If $\overline{\psi}_{\vec{w},\vec{d}}=\overline{\psi}_{\vec{w}',\vec{d}'}$, then 
    \begin{equation*}
        \begin{split}
            0&\equiv(\sum_{i=0}^{f-1} q^{w_{i}} p^{i} r_{i}+(1-q)\sum_{i=0}^{f-1}d_ip^i)-(\sum_{i=0}^{f-1} q^{w'_{i}} p^{i} r_{i}+(1-q)\sum_{i=0}^{f-1}d'_ip^i)\ (\op{mod}q^2-1)\\
             &\equiv(q-1)\sum_{i=0}^{f-1}p^i((w_i-w'_{i})r_i-(d_i-d'_{i}))\ (\op{mod}q^2-1).
        \end{split}
    \end{equation*}
    Note that $1\leq r_0\leq p-2$ and $0\leq r_i\leq p-3$ for $i>0$ since $\overline{\rho}$ is generic. Since 
    $\sum_{i=0}^{f-1}p^i\left|(w_i-w'_{i})r_i-(d_i-d'_{i})\right|\leq \sum_{i=0}^{f-1}p^i(r_i+2)< q+1$, we have $\sum_{i=0}^{f-1}p^i((w_i-w'_{i})r_i-(d_i-d'_{i}))=0$. 
    Since $\left|(w_0-w'_{0})r_0-(d_0-d'_{0})\right|\leq p$ and $\left|(w_i-w'_{i})r_i-(d_i-d'_{i})\right|\leq p-1$ for $i>0$, we have $(w_i-w'_{i})r_i-(d_i-d'_{i})=0$ for all $i$, or for some $0<i\leq f-1$, $(w_0-w'_{0})r_0-(d_0-d'_{0})=p$, 
    $(w_i-w'_{i})r_i-(d_i-d'_{i})=-1$, $(w_j-w'_{j})r_i-(d_j-d'_{j})=p-1$ for $1<j<i$, and $(w_j-w'_{j})r_i-(d_j-d'_{j})=0$ for $i<j\leq f-1$. 
    For the first case, since $w_i\leq w'_i$ if $d_i>d'_i$ by relations above, we have $d_i=d'_i$ and $(w_i-w'_{i})r_i=0$ for all $i$. 
    Since $1\leq r_0\leq p-2$, we have $w_0=w'_0$. Then $\vec{w}$ is uniquely determined by $(b_i)_i$ once $w_0$ is fixed. For the second case, we have $w_0=1$, $w'_0=0$, $d_0=-1$, $d_0-1$. Then $w_{f-1}=1$, $w'_{f-1}=0$ by the second relation which implies $d_{f-1}\in\{-1,0\}$, $d'_{f-1}\in\{0,1\}$ by the first relation. 
    Since $(w_{f-1}-w'_{f-1})r_{f-1}-(d_{f-1}-d'_{{f-1}})=-1$ or $0$, we have this should be $0$ and $d_{f-1}=d'_{{f-1}}=0$. Then we have $w_{f-2}=1$, $w'_{f-2}=0$ by the first relation which implies $d_{f-2}\in\{-1,0\}$, $d'_{f-2}\in\{0,1\}$. We repeat this procedure and deduce that $(w_i-w'_{i})r_i-(d_i-d'_{i})=0$ for all $i\leq f-1$ which is a contradiction. \par
    
\end{proof}

\bigskip

\noindent  Department of Mathematical Sciences, Tsinghua University, Beijing, 100084\\
{\it E-mail:} {\ttfamily yang-che20@mails.tsinghua.edu.cn}\\

\noindent  Academy for Multidisciplinary Studies, Capital Normal University, Beijing, 100048\\
{\it E-mail:} {\ttfamily haoran@cnu.edu.cn}\\

\end{document}